\documentclass{amsart}
\usepackage{graphics}
\usepackage{ae}
\usepackage{amsmath,bbm}

\parindent = 0cm

\usepackage{amssymb}

\numberwithin{equation}{section}

\newtheorem{theo}{Theorem}
\newtheorem{prop}[theo]{Proposition}

\newtheorem{coro}[theo]{Corollary}
\newtheorem{lemma}[theo]{Lemma}
\theoremstyle{definition}
\newtheorem{defi}[theo]{Definition}

\begin{document}

\title[Decidability of uniform recurrence of morphic sequences]{Decidability of uniform recurrence of morphic sequences}
\author{Fabien Durand}
\address[F.D.]{\newline
Universit\'e de Picardie Jules Verne\newline
Laboratoire Ami\'enois de Math\'ematiques Fondamentales et
Appliqu\'ees\newline
CNRS-UMR 7352\newline
33 rue Saint Leu\newline
80039 Amiens Cedex 01\newline
France.}
\email{fabien.durand@u-picardie.fr}

\begin{abstract}
We prove that the uniform recurrence of morphic sequences is decidable. 
For this we show that the number of derived sequences of uniformly recurrent morphic sequences is bounded.
As a corollary we obtain that uniformly recurrent morphic sequences are primitive substitutive sequences. 
\end{abstract}

\maketitle

\section{Introduction}

In this paper we answer a question raised in \cite{Nicolas&Pritykin:2009}.
We prove the decidability of the following problem:

\medskip

{\bf Input:}
Two finite alphabets $A$ and $B$, an endomorphism $\sigma : A^* \to A^*$ prolongable on the letter $a$, a morphism $\phi : A^* \to B^*$.

{\bf Question:}
Is $\phi (\sigma^\infty (a))$ uniformly recurrent?

\medskip

Thus, the main result of the paper is the following.

\begin{theo}
\label{theo:main}
The uniform recurrence of morphic sequences is decidable.
\end{theo}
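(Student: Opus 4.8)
The plan is to combine a finiteness theorem for return words with the decidability of the equality of two morphic sequences. Recall the notion of \emph{derived sequence}: if $y$ is uniformly recurrent and $u$ is a nonempty prefix of $y$, then $y$ reads as a concatenation of its (finitely many) \emph{return words} to $u$; coding these return words by letters produces a sequence $\mathcal{D}_u(y)$, and $y$ is recovered as $y=\Theta_u(\mathcal{D}_u(y))$, where $\Theta_u$ is the non-erasing morphism sending each coding letter to the corresponding return word. By a theorem of Durand, a uniformly recurrent sequence $y$ is primitive substitutive if and only if the set $\{\mathcal{D}_u(y) : u \text{ a prefix of } y\}$ is finite; moreover this correspondence is effective: if the set has at most $N$ elements, then there are a prefix $u$ and a prefix $w$ of $\mathcal{D}_u(y)$, of length bounded by a computable function of $N$, such that $\mathcal{D}_u(y)$ is a fixed point of the primitive substitution $\Theta_w$ and $y=\Theta_u(\mathcal{D}_u(y))$; in particular $y$ is primitive substitutive, with all the relevant data computable. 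Consequently, Theorem~\ref{theo:main} will follow from the following \emph{main technical result}: there is a computable function $N_0$ of $|A|$ and $|B|$ such that every uniformly recurrent sequence of the form $\phi(\sigma^\infty(a))$, with $\sigma:A^*\to A^*$ prolongable on $a$ and $\phi:A^*\to B^*$, has at most $N_0(|A|,|B|)$ derived sequences. This is exactly the boundedness statement announced in the abstract, and it immediately yields the stated corollary that uniformly recurrent morphic sequences are primitive substitutive.

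Granting the main technical result, the decision procedure is the following. From $A$, $B$, $\sigma$, $a$, $\phi$ compute $N_0(|A|,|B|)$ and then, using the effective form of Durand's theorem recalled above, a bound $N_1$ on the cardinality of the alphabet and on the lengths of the images of a primitive substitution $\tau$ and of a morphism $\psi$ such that: \emph{if} $\phi(\sigma^\infty(a))$ is uniformly recurrent, \emph{then} $\phi(\sigma^\infty(a))=\psi(\tau^\infty(c))$ for some such $\tau$, $\psi$ and some letter $c$. Enumerate the finite set of all triples $(\tau,\psi,c)$ obeying these bounds with $\tau$ primitive and prolongable on $c$; for each of them decide whether $\psi(\tau^\infty(c))=\phi(\sigma^\infty(a))$, which is possible since the equality of two morphic sequences is decidable. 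If some triple passes the test, then $\phi(\sigma^\infty(a))$ equals a primitive substitutive sequence, hence is uniformly recurrent, and we output ``yes''; if no triple passes, then by the main technical result $\phi(\sigma^\infty(a))$ cannot be uniformly recurrent, and we output ``no''. The procedure halts (finitely many triples, each test terminating) and is correct, which proves the theorem.

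Everything therefore rests on the main technical result, and this is where I expect the real work to lie, because $\sigma$ and $\phi$ are arbitrary: $\sigma$ need not be primitive and may mix letters of bounded, polynomial and exponential growth, while $\phi$ may identify letters, so the classical theory of primitive morphisms does not apply directly. The approach I would take is an induction on $|A|$. After the standard normalizations (replace $\sigma$ by a non-erasing morphism, discard the letters not occurring in $\sigma^\infty(a)$, and arrange the growing letters coherently), one argues by cases on the growth behaviour of the letters. When $\phi(\sigma^\infty(a))$ is uniformly recurrent one shows that the nested block decompositions of $x=\phi(\sigma^\infty(a))$ obtained by iterating $\sigma$ are \emph{bounded} --- consecutive blocks $\phi(\sigma^n(b))$ have comparable lengths and every letter occurs syndetically --- and this forces the return words to the prefixes of $x$ to fall into boundedly many shapes, bounding the number of derived sequences. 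If instead some letter grows too slowly, or $\phi$ collapses part of the alphabet, one produces a sequence $\phi'((\sigma')^\infty(a'))$ with $|A'|<|A|$ defining the same $x$ and concludes by the induction hypothesis. The main obstacle is precisely establishing these bounded block decompositions in the presence of non-exponential growth and of letter identifications, while keeping every constant effective; that is the combinatorial core of the argument.
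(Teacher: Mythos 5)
Your overall architecture is ``bound the number of derived sequences, deduce a bounded primitive substitutive presentation, then guess-and-verify'', and the verification step is where the argument breaks. You need to decide, for each candidate triple $(\tau,\psi,c)$ with $\tau$ primitive, whether $\psi(\tau^\infty(c))=\phi(\sigma^\infty(a))$, where $\phi(\sigma^\infty(a))$ is an \emph{arbitrary} morphic sequence. That is an instance of the HD0L $\omega$-equivalence problem with one non-primitive input; the paper itself lists the equality of two morphic sequences among the open problems, the known cases being the purely morphic one (Culik--Harju) and the case where \emph{both} sequences are primitive substitutive (\cite{Durand:2012}). You cannot assume your input is primitive substitutive --- that is essentially what you are trying to decide. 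The paper's algorithm avoids any equality oracle: it computes, directly from $(\sigma,\phi)$, the data $(\sigma_{U_n},\psi_{u_n})$ defining the successive derived sequences $\mathcal{D}^n(x)$ for $n\le 1+K_1^{K_1K_2+2}$, aborting with the answer ``no'' whenever an intermediate quantity exceeds the bounds that uniform recurrence would force (more than $K_1$ return words, a return word longer than $K$, etc.), answering ``no'' if all computed pairs are distinct, and answering ``yes'' if two pairs coincide --- in which case a primitive return substitution fixing $\mathcal{D}^n(x)$ is exhibited and uniform recurrence follows. The decision is thus a comparison of finitely many explicitly computed finite objects, not a test of equality of infinite sequences.

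Two further points. First, your ``main technical result'' --- the bound on the number of derived sequences --- is the actual content of the paper (Theorem \ref{theo:derbound}) and you only sketch it; moreover the bound cannot depend only on $|A|$ and $|B|$: the paper's constants $K_1$ and $K_2$ involve $|\sigma|$, $Q_\sigma$, $p_\sigma(K+1)$ and the linear recurrence constant $K$ of Corollary \ref{coro:Rsigma}, all computable from the input morphisms but not from the alphabet sizes alone (a computable bound in the input is all decidability requires, so this is fixable, but the claim as stated is too strong). Second, the genuinely delicate preliminary reductions --- deciding ultimate periodicity first, passing to a non-erasing presentation, and the non-growing case where Pansiot's dichotomy produces a word $w$ with $w^n\in\mathcal{L}(y)$ for all $n$ so that uniform recurrence forces $x=\phi(w)^\infty$ --- are absent from your sketch; they are needed before the growth-type analysis (Lemma \ref{lemma:samegtype}, Proposition \ref{prop:boundedpreimages}) and the return words to the sets $U=\phi^{-1}(\{\phi(u)\})$ can be brought to bear.
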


During the writing process of this work, I. Mitrofanov put on arxiv another solution of this problem \cite{Mitrofanov:preprint2011b}. 
Observe that Mitrofanov in \cite{Mitrofanov:preprint2011a} and Durand in \cite{Durand:preprint2011} also recently solved the HD0L ultimate periodicity problem. 
The proofs of Mitrofanov and Durand find their main arguments in \cite{Kanel-Belov&Mitrofanov:preprint2011} and \cite{Durand:1998}, respectively.
It seems (the first article is written in Russian) that these two articles share some common features that would be very interesting to understand.   

To prove the decidability of uniform recurrence for morphic sequences we proceed as in \cite{Durand:2012} and \cite{Durand:preprint2011}, that is, we use the main result of \cite{Durand:1998} that we extend to our setting.

\begin{theo}
\label{theo:maindurand1998}
Let $x$ be a uniformly recurrent sequence defined on a finite alphabet.
Then, it is a morphic sequence if and only if the set of its derived sequences (on prefixes of $x$) is finite.
\end{theo}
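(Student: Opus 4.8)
The plan is to prove the two implications separately. The implication ``$x$ has finitely many derived sequences on its prefixes $\Rightarrow$ $x$ is morphic'' is essentially \cite{Durand:1998}: there a uniformly recurrent sequence with finitely many derived sequences is shown to be even \emph{primitive} substitutive, hence \emph{a fortiori} morphic. The only point needing care is that \cite{Durand:1998} is phrased with return words to arbitrary factors whereas here we restrict to prefixes of $x$; that these two finiteness conditions coincide for uniformly recurrent $x$ is a routine matter, handled as in \cite{Durand:2012}. So the new content is the converse, which the rest of this sketch concerns.

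Assume then that $x$ is uniformly recurrent and morphic; I want a finite set containing all derived sequences $\mathcal{D}_w(x)$ with $w$ a prefix of $x$. If $x$ is ultimately periodic it is periodic (a uniformly recurrent ultimately periodic sequence is periodic), and a periodic sequence visibly has only finitely many derived sequences; so assume $x$ aperiodic. By a classical normalization we may write $x = \tau(\rho^\infty(b))$ with $\rho$ a non-erasing substitution prolongable at $b$ and $\tau$ a letter-to-letter coding. Put $y = \rho^\infty(b)$ and let $\Omega_y$, $\Omega_x$ be the associated subshifts. Since $\tau$ is a coding it commutes with the shift, so $\tau(\Omega_y)=\Omega_x$; and $\Omega_x$ is minimal because $x$ is uniformly recurrent. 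Choose a minimal subsystem $M\subseteq\Omega_y$; then $\tau(M)$ is a nonempty closed shift-invariant subset of the minimal system $\Omega_x$, so $\tau(M)=\Omega_x$, and in particular there is a uniformly recurrent $z\in M$ with $\tau(z)=x$.

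The heart of the argument is the claim that $M$ is a \emph{primitive substitutive} subshift. Granting it, the point $z$ of the primitive substitutive minimal subshift $M$ is itself primitive substitutive (by \cite{Durand:1998,Durand:2012}: $M$ has finitely many derived sequences, hence so does each of its points on its own prefixes, hence each point is primitive substitutive), so $x=\tau(z)$ is primitive substitutive too (compose $\tau$ with the morphism generating $z$); then the implication ``primitive substitutive $\Rightarrow$ finitely many derived sequences'' of \cite{Durand:1998}, applied to $x$, completes the proof. Incidentally this also yields the Corollary announced in the abstract, that a uniformly recurrent morphic sequence is primitive substitutive.

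I expect the claim just used --- that the minimal pieces of a (non-erasing, possibly non-primitive) substitution subshift are primitive substitutive --- to be the main obstacle. The subshift $\Omega_y$ of a non-primitive $\rho$ can be genuinely complicated, and one must extract this structure from it. The route I would take: split the alphabet of $\rho$, using that $\rho$ is non-erasing, into bounded letters (whose $\rho$-orbit stabilizes to a finite permuted set) and growing letters; pass to a suitable power of $\rho$; and then use minimality of $M$ --- which forces every letter and factor occurring in $M$ to reoccur with bounded gaps in every point of $M$ --- together with recognizability of the non-erasing substitution $\rho$ on aperiodic points, to show that the letters occurring in $M$, equipped with an appropriate power of $\rho$ restricted to them, form a primitive substitution generating $M$. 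This consistent desubstitution of the points of $M$ is where the real work lies; the remaining ingredient, the equivalence between the prefix and arbitrary-factor formulations of ``finitely many derived sequences'', is routine and taken from \cite{Durand:1998,Durand:2012}.
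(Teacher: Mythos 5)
The first implication (finitely many derived sequences implies morphic) is indeed just the main result of \cite{Durand:1998}, and your reduction of the converse to the aperiodic case together with the normalization $x=\tau(\rho^\infty(b))$, $\tau$ a coding and $\rho$ non-erasing, matches the paper's setup. The problem lies in the converse itself, where your argument has a fatal gap that is independent of the (already substantial, and only sketched) claim that the minimal subsystem $M\subseteq\Omega_y$ is a primitive substitution subshift. You pick $z\in M$ with $\tau(z)=x$ and assert that $z$ is primitive substitutive because ``$M$ has finitely many derived sequences, hence so does each of its points on its own prefixes.'' This inference is false. Durand's finiteness statement concerns the derived sequences of one given primitive substitutive \emph{sequence}, not of every point of the associated subshift: an arbitrary point $w$ of a primitive substitution subshift has the same language as the fixed point, and its derived subshifts fall into finitely many classes, but the derived \emph{sequences} $\mathcal{D}_u(w)$ record where $w$ sits inside those subshifts and need not form a finite set. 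Concretely, only countably many sequences are substitutive while an aperiodic primitive substitution subshift is uncountable; for the Fibonacci substitution the points are Sturmian sequences of a fixed quadratic slope $\alpha$, and such a sequence is primitive substitutive only when its intercept lies in $\mathbb{Q}(\alpha)$. So the preimage $z$ of $x$ produced by your argument has no reason to be substitutive, and the chain ``$z$ primitive substitutive $\Rightarrow$ $x=\tau(z)$ primitive substitutive $\Rightarrow$ finitely many derived sequences'' collapses at its first link. (A secondary issue in your sketch of the main claim: the set of letters occurring in $M$ need not be invariant under $\rho$ or any given power of it, since $\rho$ may carry $M$ into a different minimal subsystem; turning ``$\rho$ restricted to the letters of $M$'' into a substitution generating $M$ already requires an eventual-periodicity argument on the minimal components.)

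The paper avoids passing to an abstract minimal subsystem precisely because of this. It never forgets the concrete point $y=\sigma^\infty(a)$: after reducing to a growing $\sigma$ via Pansiot's theorem, proving that $x$ is linearly recurrent for a computable constant $K$ and that all letters have growth type $(0,\alpha)$, and bounding the fibres of $\phi$ on $\mathcal{L}(y)$, it introduces return words in $y$ to the sets $U=\phi^{-1}(\{\phi(u)\})$ for $u$ a prefix of $y$, and builds an induced substitution $\sigma_U$ together with a coding $\psi_u$ such that $\psi_u(\sigma_U^\infty(1))=\mathcal{D}_u(x)$. Explicit bounds on $\#\tilde{R}_{y,U}$ and on $|\sigma_U|$ (Theorem \ref{theo:derbound}) then bound the number of pairs $(\sigma_U,\psi_u)$, hence the number of derived sequences of $x$. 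If you want to salvage your plan, you must keep track of the actual preimage data of the prefixes of $x$ inside the given sequence $y$ rather than of an unspecified point of $M$; that is exactly what the return words to the sets $U$ accomplish.
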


In fact we prove more giving effective bounds.
As a consequence this shows that uniformly recurrent morphic sequences are primitive substitutive sequences.

\begin{theo}
\label{theo:morphic-sub}
Uniformly recurrent morphic sequences are primitive substitutive sequences. 
\end{theo}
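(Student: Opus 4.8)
The plan is to obtain Theorem~\ref{theo:morphic-sub} as a direct consequence of Theorem~\ref{theo:maindurand1998} (in the effective form established in the body of the paper) together with the characterization of primitive substitutive sequences from \cite{Durand:1998}. Recall that for a uniformly recurrent sequence $x$, every prefix $u$ of $x$ has only finitely many return words in $x$; enumerating them as letters and rewriting $x$ as the corresponding concatenation of return words produces the derived sequence $\mathcal{D}_u(x)$, and the quantity that matters is the cardinality of the set $\{\mathcal{D}_u(x) : u \text{ a prefix of } x\}$, taken up to the arbitrary choice of enumeration. The main theorem of \cite{Durand:1998} asserts that a uniformly recurrent sequence is primitive substitutive if and only if this set is finite.

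Given this, the argument is short. Let $x=\phi(\sigma^\infty(a))$ be a uniformly recurrent morphic sequence. Since $x$ is uniformly recurrent and morphic, Theorem~\ref{theo:maindurand1998} — more precisely the explicit upper bound on the number of derived sequences proved here, of which only the finiteness is needed at this point — shows that the set of derived sequences of $x$ on its prefixes is finite. Applying the \cite{Durand:1998} characterization just recalled, $x$ is primitive substitutive, which is the claim. For completeness one may note the converse inclusion: a primitive substitutive sequence is (linearly, hence) uniformly recurrent and is trivially morphic, so uniform recurrence is exactly what cuts the primitive substitutive sequences out of the morphic ones.

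I do not expect any genuine obstacle in this particular step: all the difficulty has been front-loaded into the ``morphic $\Rightarrow$ finitely many derived sequences'' direction of Theorem~\ref{theo:maindurand1998}, i.e.\ into extending the substitutive-case analysis of \cite{Durand:1998} so that it applies to an arbitrary morphic presentation $\phi\circ\sigma^\infty$ (where $\sigma$ need not be primitive, need not be growing, and $\phi$ need not be letter-to-letter) and into making the resulting bound effective. Once that is in hand, the passage from finiteness of the derived sequences to an actual primitive substitution generating $x$ is the classical construction of \cite{Durand:1998}, so the proof of Theorem~\ref{theo:morphic-sub} amounts to invoking these two results in the correct order.
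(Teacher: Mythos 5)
Your proposal is correct and follows essentially the same route as the paper: both rest on the finiteness of the set of derived sequences established in Theorem~\ref{theo:derbound} (after the reductions of Section~\ref{section:decid} to a coding of a growing, prolongable $\sigma$, with the non-growing and periodic cases handled separately), combined with the machinery of \cite{Durand:1998}. The only cosmetic difference is that you invoke the full ``finitely many derived sequences $\Leftrightarrow$ primitive substitutive'' characterization of \cite{Durand:1998} as a black box, whereas the paper unfolds that direction explicitly (pigeonhole on $\mathcal{D}^n(x)$, the fixed point $z=\mathcal{D}^{m-n}(z)$ of a primitive return substitution, then Proposition~9 of \cite{Durand:1998}).
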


As we use derived sequences, we also use return words.
Because we have to deal with some non necessarily uniformly recurrent sequences we need to introduce a notion of return words to sets of words.

\medskip

This paper is composed as follows.
In Section \ref{section:background} we recall some definitions and well-known facts. 
Moreover, we introduce return words to sets of words and give some first properties. 
This construction is fundamental for our decidability algorithm.
In Section \ref{section:bounds} we prove Theorem \ref{theo:maindurand1998} giving explicit bounds concerning this notion of return words in the context of morphic sequences. 
It is these bounds that make our problem decidable.
In Section \ref{section:decid} we show the main result of this paper: Theorem \ref{theo:main}.
We end with Section \ref{section:remarks} where Theorem \ref{theo:morphic-sub} is proven and Theorem \ref{theo:maindurand1998} is extended to descendant sequences (see \cite{Holton&Zamboni:1999}).
These results are interpreted in terms of dynamical systems, few words are given on the algorithmical complexity, and we add open decidability problems for morphic sequences to the list given in \cite{Nicolas&Pritykin:2009}.

\section{Some background}
\label{section:background}

\subsection{Words and sequences}
 
An {\it alphabet} $A$ is a finite set of elements called {\it
  letters}. 
A {\it word} over $A$ is an element of the free monoid
generated by $A$, denoted by $A^*$. 
Let $x = x_0x_1 \cdots x_{n-1}$
(with $x_i\in A$, $0\leq i\leq n-1$) be a word, its {\it length} is
$n$ and is denoted by $|x|$. 
The {\it empty word} is denoted by $\epsilon$, $|\epsilon| = 0$. 
The set of non-empty words over $A$ is denoted by $A^+$. 
The elements of $A^{\mathbb{N}}$ are called {\it sequences}. 
If $x=x_0x_1\cdots$ is a sequence (with $x_i\in A$, $i\in \mathbb{N}$) and $I=[k,l]$ an interval of
$\mathbb{N}$ we set $x_I = x_k x_{k+1}\cdots x_{l}$ and we say that $x_{I}$
is a {\it factor} of $x$.  If $k = 0$, we say that $x_{I}$ is a {\it
 prefix} of $x$. 
The set of factors of length $n$ of $x$ is written
$\mathcal{L}_n(x)$ and the set of factors of $x$, or the {\it language} of $x$,
is denoted by $\mathcal{L}(x)$. 
We set $p_x (n)= \# \mathcal{L}_n(x)$.
It is the word complexity (function) of $x$. 
The {\it occurrences} in $x$ of a word $u$ are the
integers $i$ such that $x_{[i,i + |u| - 1]}= u$. 
If $u$ has an occurrence in $x$, we also say that $u$ {\em appears} in $x$.
When $x$ is a word,
we use the same terminology with similar definitions.

The sequence $x$ is {\it ultimately periodic} if there exist a word
$u$ and a non-empty word $v$ such that $x=uv^{\infty}$, where
$v^{\infty}= vvv\cdots $.
In this case $|v|$ is called a {\em length period} of $x$. 
It is {\it periodic} if $u$ is the empty word. 
A word $u$ is {\em recurrent} in $x$ if it appears in $x$ infinitely many times.
A sequence $x$ is {\it uniformly recurrent} if every factor $u$ of $x$
appears infinitely often in $x$ and the greatest
difference of two successive occurrences of $u$ is bounded.

In the sequel $A$ and $B$ will always be finite alphabets.

\subsection{Morphisms and matrices} 
Let $\sigma$ be a {\it morphism} 
from $A^*$ to $B^*$. 
When $\sigma (A) = B$, we say $\sigma$ is a {\em coding}. 
Thus, codings are onto.
If $\sigma (A)$ is included in $B^+$, it induces by concatenation a map
from $A^{\mathbb{N}}$ to $B^{\mathbb{N}}$. These two maps are also called $\sigma$.
To the morphism $\sigma$ is naturally associated the matrix $M_{\sigma} =
(m_{i,j})_{i\in B , j \in A }$ where $m_{i,j}$ is the number of
occurrences of $i$ in the word $\sigma(j)$.
We set $|\sigma | = \max_{a\in A} |\sigma (a)|$ and $\langle \sigma \rangle = \min_{a\in A} |\sigma (a)|$.

\subsection{Morphic and substitutive sequences}

In the sequel we use the definition of substitution that is in \cite{Queffelec:2010} and the notion of substitutive sequence defined in \cite{Durand:1998}.
Both are restrictive. 
For non restrictive context we will speak of (prolongable) endomorphisms and (purely) morphic sequences.

The language of the endomorphism $\sigma : A^* \to A^*$ is the set $\mathcal{L} (\sigma ) $ of words appearing in some $\sigma^n (a)$, $a\in A$.
The number of words of length $n$ in $\mathcal{L} (\sigma )$ is denoted by $p_\sigma (n)$.

If there exist a letter $a\in A$ and a word $u\in A^+$ such that $\sigma(a)=au$ and moreover, if $\lim_{n\to+\infty}|\sigma^n(a)|=+\infty$, then $\sigma$ is said
    to be {\em prolongable on $a$}.
It is a {\em substitution} whenever it is prolongable and growing (that is, $\lim_n \langle \sigma^n \rangle = +\infty$). 
Since for all $n\in\mathbb{N}$, $\sigma^n(a)$ is a prefix of $\sigma^{n+1}(a)$ and because
     $|\sigma^n(a)|$ tends to infinity with $n$, the
     sequence $(\sigma^n(aaa\cdots ))_{n\ge 0}$ converges (for the usual
     product topology on $A^\mathbb{N}$) to a sequence denoted by
     $\sigma^\infty(a)$. 
The endomorphism $\sigma$ being continuous for the product topology, $\sigma^\infty(a)$ is a fixed point of $\sigma$: $\sigma (\sigma^\infty(a)) = \sigma^\infty(a)$.
A sequence obtained in
    this way by iterating a prolongable endomorphism is said to be {\em purely morphic} (w.r.t. $\sigma$) or {\em purely substitutive} when $\sigma$ is a substitution. 
If $x\in A^\mathbb{N}$ is purely morphic and $\phi:A^*\to B^*$ is a morphism then the sequence $y=\phi (x)$ is said to be a {\em morphic sequence} (w.r.t. $\sigma$). 
When $\phi $ is a coding and $\sigma $ a substitution, we say $y$ is {\em substitutive} (w.r.t. $\sigma$). 

Whenever the matrix associated to $\sigma $ is
primitive (i.e., when it has a power with strictly positive coefficients) we say that $\sigma$ is a {\it primitive endomorphism}.  
In this situation we easily check that $\mathcal{L} (\sigma ) = \mathcal{L} (\sigma^\infty (a))$.
A sequence is {\em primitive substitutive} if it is substitutive w.r.t. a primitive substitution.

We
say $\sigma$ is
{\it erasing} if there exists $b\in A$ such that $\sigma (b)$ is the
empty word.  
Such a letter is called an {\em erasing letter}.

\subsection{Iterated matrices}

For the sequel we need the following well-known results on iterated matrices.

\begin{lemma}
\label{lemme:horn}
\cite{Horn&Johnson:1990}
If $M$ is a primitive $d\times d$ matrix then there exists $k\leq d^2-2d+2$ such that $M^k $ has strictly positive entries.
\end{lemma}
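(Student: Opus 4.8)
The plan is to recast the claim combinatorially and then prove the Wielandt bound by hand. Associate to the nonnegative matrix $M=(m_{i,j})_{1\le i,j\le d}$ the directed graph $G$ on vertices $\{1,\dots,d\}$ having an arc $i\to j$ exactly when $m_{i,j}>0$. An easy induction on $k$ gives that the $(i,j)$ entry of $M^k$ is positive if and only if $G$ has a walk of length exactly $k$ from $i$ to $j$; so $M^k$ is strictly positive iff every ordered pair of vertices is joined by a walk of length $k$, and $M$ is primitive iff $G$ is strongly connected and the gcd of the lengths of its directed cycles is $1$. The case $d=1$ is immediate ($k=1$ works), so assume $d\ge 2$.

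First I would bound the length $s$ of a shortest directed cycle of $G$. Since $G$ has $d$ vertices, $s\le d$; and if $s=d$ then $G$ contains a Hamiltonian cycle and no further arc, since any chord would close a cycle of length strictly less than $d$. Then $G$ would have a single cycle, of length $d$, so the gcd of its cycle lengths would be $d\ne 1$, contradicting primitivity. Hence $s\le d-1$.

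The combinatorial core is the estimate: if $G$ is strongly connected on $d$ vertices, has a cycle of length $s$, and the gcd of its cycle lengths is $1$, then any two vertices are joined by a walk of length $k$ for every $k\ge d+s(d-2)$. Fix a vertex $v$ lying on a shortest cycle $C$ and let $S\subseteq\mathbb{N}$ be the set of lengths of closed walks through $v$. Concatenation shows $S$ is closed under addition, and, using strong connectedness to route every cycle of $G$ through $v$ (via a fixed path out and the same path back), one checks that $\gcd S$ equals the gcd of all cycle lengths of $G$, hence $\gcd S=1$. A Schur/Frobenius-type argument then shows $S$ contains all sufficiently large integers; combining this with the facts that from $v$ one reaches any vertex, and every vertex reaches $v$, by paths of length at most $d-1$, and that $C$ allows length adjustments by multiples of $s$, one obtains the bound $d+s(d-2)$ after a careful count. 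Plugging in $s\le d-1$ yields $M^k>0$ for all $k\ge d+(d-1)(d-2)=d^2-2d+2$, in particular for $k=d^2-2d+2$.

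I expect the main obstacle to be the sharp accounting in the last step: extracting the exact constant $d+s(d-2)$ (rather than a crude quadratic bound such as $d^2-d$) requires arranging the short detours to and from $v$ and the corrections along $C$ so that no length is wasted. The reduction $s\le d-1$ and the numerical-semigroup input are comparatively routine. Alternatively, for the modest use made of it here, one may simply invoke the Wielandt bound in the form stated in \cite{Horn&Johnson:1990}.
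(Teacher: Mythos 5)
The paper gives no proof of this lemma at all: it is quoted directly from Horn and Johnson (it is Wielandt's bound on the exponent of a primitive matrix), so your closing fallback --- simply invoking the cited reference --- is precisely what the paper does, and for the way the lemma is used here (only the existence of some computable bound matters) that is entirely adequate.

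Your attempted direct proof follows the standard roadmap: graph reformulation, girth $s\le d-1$, and the Dulmage--Mendelsohn estimate $\exp(G)\le d+s(d-2)$. The parts you actually carry out --- the translation to walks, and the argument that a primitive digraph on $d\ge 2$ vertices must have a cycle of length at most $d-1$ --- are correct. The gap is in the central estimate, which you assert ``after a careful count'' rather than prove, and the mechanism you sketch cannot deliver the stated constant. Routing every pair $(i,j)$ through one fixed vertex $v$ of the short cycle yields walks of all lengths at least $F(S)+1+a+b$, where $F(S)$ is the Frobenius number of the semigroup $S$ of closed-walk lengths at $v$ and $a,b\le d-1$ are the transit lengths. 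On Wielandt's extremal digraph (the $d$-cycle together with one extra arc creating a $(d-1)$-cycle) one has $S=\langle d-1,d\rangle$, hence $F(S)=d^2-3d+1$, and your guarantee degrades to $d^2-d$, which is strictly worse than $d^2-2d+2$ for $d\ge 3$; so the deferred count is not bookkeeping but a genuinely different argument. The standard proof avoids this Frobenius bottleneck by passing to the digraph of $M^s$, which carries a loop at each of the $s$ vertices of the short cycle, and by splicing a path of length exactly $d-s$ from an arbitrary vertex into the cycle with a walk in that power digraph; it is the simultaneous use of all $s$ loops, rather than a single base point, that produces $d+s(d-2)$. Either supply that argument or keep the citation, as the paper does.
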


>From Lemma \ref{lemme:horn}, Section 4.4 and Section 4.5 in \cite{Lind&Marcus:1995} we deduce following proposition. 

\begin{prop}
\label{prop:decomprim}
Let $\sigma : A^* \to A^*$ be an endomorphism.
Then, there exist three positive integers 
$r_\sigma $, $q$ and $l$, where $q\leq l-1$, and a partition 
$\{ A_i | 1\leq i\leq l \}$ of $A$ such that
\begin{equation*}
M_\sigma^{r_\sigma}=\bordermatrix{        & A_1     & A_2         & \cdots & A_q       & A_{q+1} & A_{q+2} & \cdots & A_{l}  \cr
                  A_1     & M_1     & 0           & \cdots & 0         & 0       & 0       & \cdots & 0      \cr
                  A_2     & M_{1,2} & M_2         & \cdots & 0         & 0       & 0       & \cdots & 0      \cr
                  \vdots  & \vdots  & \vdots      & \ddots & \vdots    & \vdots  & \vdots  & \vdots & \vdots \cr
                  A_q     & M_{1,q} & M_{2,q}     & \cdots & M_q       & 0       & 0       & \cdots & 0      \cr
                  A_{q+1} & M_{1,q+1} & M_{2,q+1} & \cdots & M_{q,q+1} & M_{q+1} & 0       & \cdots & 0      \cr
                  A_{q+2} & M_{1,q+2} & M_{2,q+2} & \cdots & M_{q,q+2} & 0       & M_{q+2} & \cdots & 0      \cr
                  \vdots  & \vdots    & \vdots    & \ddots & \vdots    & \vdots  & \vdots  & \ddots & \vdots \cr
                  A_l     & M_{1,l}   & M_{2,l}   & \cdots & M_{q,l}   & 0       & 0       & \cdots & M_l    \cr},
\end{equation*}
where the matrices $M_i$ have only positive entries or are equal to zero.
Moreover, the partition and $r_\sigma$ can be algorithmically computed.
\end{prop}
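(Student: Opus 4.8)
The plan is to put $M_\sigma$ into its Frobenius (block lower-triangular) normal form, then refine the block structure using the cyclic decomposition of irreducible nonnegative matrices, and finally read off the exponent $r_\sigma$ from the explicit bound of Lemma~\ref{lemme:horn}; the point of that bound is precisely that it keeps $r_\sigma$ uniform in the blocks and computable.

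First I would associate to $M_\sigma$ its directed graph $\mathcal{G}_\sigma$ on the vertex set $A$, with an edge from $b$ to $c$ whenever the letter $c$ occurs in $\sigma(b)$ (equivalently $m_{c,b}>0$). Its strongly connected components can be computed, e.g.\ by Tarjan's algorithm, and topologically ordered; reindexing $A$ along this order puts $M_\sigma$ into block lower-triangular form whose diagonal blocks are the matrices attached to the components. This is essentially the content of Section~4.4 of \cite{Lind&Marcus:1995}. A diagonal block is the zero $1\times 1$ matrix exactly when its component is a single letter carrying no self-loop; every other diagonal block is a genuine irreducible nonnegative matrix.

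Next, for each irreducible diagonal block $N$ let $p_N$ be its period, i.e.\ the gcd of the lengths of the cycles through a vertex of $N$ (this is computable). By the cyclic-structure results of Section~4.5 of \cite{Lind&Marcus:1995}, the index set of $N$ splits into $p_N$ cyclic subclasses that $N$ permutes cyclically, so that for every multiple $r$ of $p_N$ the matrix $N^{r}$ is block-diagonal along these subclasses with each diagonal block primitive. I would refine the partition of $A$ accordingly and set $r_\sigma$ to be any common multiple of all the periods $p_N$ with $r_\sigma/p_N\ge d^2-2d+2$ for every $N$, where $d=\#A$ (for instance $r_\sigma=(d^2-2d+2)\cdot\operatorname{lcm}\{p_N\}$), which is clearly computable. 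Since every cyclic subclass has at most $d$ elements, Lemma~\ref{lemme:horn} applied to the primitive matrix $N^{p_N}$ restricted to a subclass shows that its $(r_\sigma/p_N)$-th power, hence the corresponding diagonal block of $M_\sigma^{r_\sigma}$, is strictly positive; the zero $1\times 1$ blocks of course stay zero. Block lower-triangularity is preserved under taking powers, and inside a single component the cyclic subclasses no longer communicate under $M_\sigma^{r_\sigma}$. So every diagonal block of $M_\sigma^{r_\sigma}$ with respect to the refined partition is strictly positive or zero.

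It remains to order the refined classes so as to exhibit the displayed pattern. Choose a topological order of the refined classes (which form an acyclic graph under $M_\sigma^{r_\sigma}$) in which all the terminal classes (those with no outgoing block-edge) occupy the last positions, call the classes $A_1,\dots,A_l$ in this order, and let $A_{q+1},\dots,A_l$ be the terminal ones. Then $M_\sigma^{r_\sigma}$ is block lower-triangular along $A_1,\dots,A_l$, its diagonal blocks $M_i$ are positive or zero by the previous paragraph, and, terminal classes having no block-edges at all, all off-diagonal blocks with both indices in $\{q+1,\dots,l\}$ vanish, which is exactly the lower-right corner in the statement. As the acyclic graph of classes always has a terminal class, one may in fact take $q=l-1$; the only genuine exception is $M_\sigma^{r_\sigma}$ --- equivalently $M_\sigma$ --- being primitive (so $l=1$), which is treated separately. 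The components (Tarjan), the periods (gcd of returning cycle lengths), and hence the partition and $r_\sigma$ are all effectively produced by the algorithms above, giving the asserted effectivity. The one delicate step is the third paragraph: one must check that a single exponent $r_\sigma$ makes every diagonal block positive (or leaves it zero), and this is exactly where the uniform bound $d^2-2d+2$ of Lemma~\ref{lemme:horn}, legitimate because all subclasses have size at most $d$, is used.
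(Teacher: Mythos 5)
Your proof is correct and follows exactly the route the paper intends: the paper gives no proof beyond the remark that the proposition follows from Lemma~\ref{lemme:horn} and Sections~4.4 and~4.5 of \cite{Lind&Marcus:1995}, i.e.\ the Frobenius block-triangular decomposition into strongly connected components, the cyclic refinement of the irreducible blocks, and the uniform primitivity exponent $d^2-2d+2$. Your write-up simply makes that deduction explicit (and correctly flags the minor inconsistency that the primitive case $l=1$ cannot satisfy $1\le q\le l-1$ as literally stated).
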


Using the terminology of the previous proposition we will say that $M_\sigma$ has $l$ primitive components. 
When the substitution $\sigma$ is primitive and defined on an alphabet with $d$ letters, we get $q=l=1$ and $r_\sigma = d^2-2d+2$ (Lemma \ref{lemme:horn}).

In order to introduce the next definition, we need the following well-known result.

\begin{prop}
\label{prop:croissance}
Let $\sigma : A^* \to A^*$ be a non-erasing endomorphism.
There exists a computable integer $p$ such that for $\tau = \sigma^p$ and for all $b\in A$, there exist $\theta (b) \geq 1$, $d(b)\in{\mathbb N}$ and $c(b)\in \mathbb{R}$ such that

$$
\lim_{n\to+\infty} \frac{|\tau^n(b)|}{c(b)\, n^{d(b)}\, \theta(b)^n}=1.
$$

Moreover, for all $b\in A$ and all integers $d'\in [0,d(b)]$ there exists a letter $b'$, appearing in some $\sigma^k (b)$, such that 
$\lim_{n\to+\infty} \frac{|\tau^n(b')|}{c(b')\, n^{d'}\, \theta(b')^n}=1$, i.e., $d(b')  =d'$.
\end{prop}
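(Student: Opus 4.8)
The plan is to prove Proposition~\ref{prop:croissance}, the statement describing the polynomial-times-exponential growth of letter images under a non-erasing endomorphism, together with the ``fullness'' assertion that every intermediate degree $d'\in[0,d(b)]$ is actually realized by some letter reachable from $b$.

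\textbf{Setting up the structured power.} First I would invoke Proposition~\ref{prop:decomprim} to pass to a power $\tau_0 = \sigma^{r_\sigma}$ whose matrix is block lower-triangular with primitive (or zero) diagonal blocks $M_1,\dots,M_l$. Since $\sigma$ is non-erasing, each $|\tau_0^n(b)|$ grows (weakly) and in fact $|\sigma^n(b)| \geq 1$ for all $n$, so no diagonal block is the $1\times 1$ zero block coming from an isolated erasing letter; more care is needed for the general zero blocks, but the point is that the growth of $|\tau_0^n(b)|$ is governed by the Perron eigenvalues of the primitive blocks $M_i$ that are ``below'' the block containing $b$ in the triangular order together with the nilpotent-like contributions from the strictly-lower-triangular coupling matrices $M_{i,j}$. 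The standard Lind--Marcus machinery (Sections 4.4--4.5 of \cite{Lind&Marcus:1995}), exactly as cited, gives for each $b$ an asymptotic equivalent $|\tau_0^n(b)| \sim c(b)\, n^{d(b)}\, \theta(b)^n$ where $\theta(b)$ is the largest Perron value among the reachable primitive components and $d(b)$ is one less than the length of the longest chain of distinct primitive components, all having that same maximal Perron value, through which $b$ can reach a growing component. To make $\theta(b)\geq 1$ and the limit exactly $1$ (rather than just $\Theta(\cdot)$), I may need to replace $\tau_0$ by a further power $\tau = \tau_0^{m} = \sigma^{p}$ with $p = m\, r_\sigma$: taking an appropriate power kills the periodic oscillation of the subdominant eigenvalues of each primitive block and also collapses any periodicity in which component is visited, so that the quantity $|\tau^n(b)|/(c(b) n^{d(b)}\theta(b)^n)$ genuinely converges to $1$. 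All the relevant data ($r_\sigma$, the partition, the Perron values as algebraic numbers, the chain lengths) are computable, so $p$ is computable.

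\textbf{The fullness statement.} For the ``moreover'' part, fix $b$ and let $d' \in [0, d(b)]$. By the description above, $d(b)$ is witnessed by a chain of distinct primitive components $C_0 \to C_1 \to \cdots \to C_{d(b)}$ (all with Perron value $\theta(b)$, each reachable from the previous, and $b$ reaching $C_0$, while $C_{d(b)}$ is growing). Pick a letter $b'$ lying in the component $C_{d(b)-d'}$ that is reachable from $b$ — such a letter appears in $\sigma^k(b)$ for some $k$ by definition of reachability in the block structure. Then the maximal chain out of $b'$ among $\theta(b)$-components has length exactly $d'+1$ (the tail $C_{d(b)-d'} \to \cdots \to C_{d(b)}$ gives at least $d'$; it cannot be longer, or else prepending it to the head of $b$'s chain would exceed $d(b)$, contradicting the maximality that defines $d(b)$ — here I must be slightly careful that the head and tail share no component, which holds because all these components are distinct in the fixed chain, but I'd want to argue it for an arbitrary longer chain out of $b'$). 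Moreover $\theta(b') = \theta(b)$ because $b'$ reaches $C_{d(b)}$ which has Perron value $\theta(b)$, and it cannot reach anything larger without $b$ reaching it too. Hence $d(b') = d'$ and the asymptotic formula for $b'$ with this degree follows from the first part applied to $b'$.

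\textbf{Main obstacle.} The technical heart, and the step I expect to be fussiest, is the precise bookkeeping in the block-triangular iteration: showing that the growth order of $|\tau^n(b)|$ is exactly $n^{d(b)}\theta(b)^n$ with $d(b)$ equal to the maximal length of a chain of \emph{distinct} maximal-Perron-value primitive components. One must verify that distinct components on a chain contribute multiplicatively one factor of $n$ each (convolution of $n^{j}\lambda^n$ terms through a coupling matrix), that components with strictly smaller Perron value contribute nothing to the leading order, that components with equal Perron value but \emph{not} on a maximal chain don't inflate the exponent, and that passing to a power $\tau=\sigma^p$ both removes oscillatory subdominant terms and stabilizes which component realizes the maximum — so that the ratio converges to the constant $1$ rather than oscillating. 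This is entirely standard (it is the Jordan-form/Perron--Frobenius analysis behind Sections~4.4--4.5 of \cite{Lind&Marcus:1995}), but writing it cleanly in the endomorphism language, and ensuring every constant and bound is computable, is where the real work lies; the combinatorial ``fullness'' argument above is comparatively easy once this is in place.
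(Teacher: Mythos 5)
Your approach is essentially the paper's: the published proof consists of citing \cite{Durand&Rigo:2009} for precisely this block-triangular Perron--Frobenius analysis and adding only the observation that $p$ can be taken to be the computable integer $r_\sigma$ of Proposition \ref{prop:decomprim}. Your sketch (growth degree as the maximal length of a chain of equal-Perron-value primitive components reachable from $b$, with the fullness statement obtained by truncating a witnessing chain) is the standard argument behind that citation; the only slip is the parenthetical claim that the last component $C_{d(b)}$ is growing, which fails when $\theta(b)=1$, but nothing in your argument uses it.
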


\begin{proof}
A proof can be found in \cite{Durand&Rigo:2009} except that it is not indicated that $p$ is computable. 
But from this proof it is clear that $p$ can be chosen to be $r_\sigma$ (Proposition \ref{prop:decomprim}).
\end{proof}

This justifies the following definitions.

\begin{defi} 
Let $\sigma : A^* \to A^*$ be a non-erasing endomorphism. 
For all $a\in A$, if there exists a couple  $(d(a), \theta(a))$ satisfying 

$$
\lim_{n\to+\infty} \frac{|\sigma^n(a)|}{c(a)\, n^{d(a)}\, \theta(a)^n}=1,
$$

for some constant $c(a)$, then we say $(d(a), \theta(a))$ is the {\em growth type} of $a$ (w.r.t. $\sigma$),
and, $\theta (a)$ is called the {\em growth rate} of $a$ (w.r.t. $\sigma$). 
When the growth type of all the letters of a word $w$ exists, we say that the {\em growth type} of $w$ is the maximal growth type of its letters.
If
$(d,\theta)$ and $(e,\beta)$ are two growth types we say that
$(d,\theta)$ is {\em less than} $(e,\beta)$ (or $(d,\theta) <
(e,\beta)$) whenever $\theta < \beta$ or, $\theta = \beta$ and $d<e$.
\end{defi}

We say that $a\in A$ is a {\em growing letter}\index{growing letter}
if $\lim_{n\to +\infty} |\sigma^n (a)| = +\infty $.

Now suppose that all letters define a growth type. 
Notice that, taking a (computable) power a $\sigma$ if needed, this is always possible (Proposition \ref{prop:croissance}). 

We set $\alpha := \max \{ \theta(a) \mid a\in A \}$, $D := \max \{ d(a) \mid 
a\in A, \theta(a) = \alpha  \}$ and $A_{max} := \{a\in A \mid
\theta (a) = \alpha , d(a) = D \}$.  
We will say that the letters of $A_{max}$ are {\em of
  maximal growth}, $(D,\alpha)$ is the {\em growth type} of
$\sigma$ and $\alpha$ is the {\em growth rate} of $\sigma$. 
Observe that if $\alpha=1$, then in view of the last part of Proposition
\ref{prop:croissance}, there exists at least one non-growing letter (of
growth type $(0,1)$).  
Otherwise stated, if a letter has a polynomially bounded
growth, then there exists at least one non-growing letter.
Consequently $\sigma$ is growing ({\em i.e.}, all its letters are growing)
if and only if $\theta (a) > 1$ for all $a\in A$.  
We observe that for all $k\geq 1$, the growth type of $\sigma^k $ is $(D, \alpha^k )$.

\subsection{Return words}
\label{subsec:returnwords}

Let $y\in A^\mathbb{N}$ and $U$ be a finite set of words in $\mathcal{L} (y)$.
We call {\it return word to $U$} any word
$y_{[i,j-1]} = y_i y_{i+1} \cdots y_{j-1}$, where 

\begin{enumerate}
\item
$i$ is an occurrence of some word of $U$ in $y$;
\item
$j$ is an occurrence of some word of $U$ in $y$;
\item
there is no other occurrence between $i$ and $j$ of any word in $U$.
\end{enumerate}

The set of return words to $U$ is denoted by ${\mathcal{R}}_{y,U}$. 
When $U$ consists of a single word then we recover the classical notion of return words (see \cite{Durand:1996,Ferenczi&Mauduit&Nogueira:1996,Durand:1998}).
If $y$ is uniformly recurrent then the set of return words to $u$ is finite.

For classical return words, we enumerate the elements of $\mathcal{R}_{y,U}$ in the order of their first appearance in $y$.
Here we should precise how we will proceed because it could happen that a return word is followed, for different occurrences, by different words of $U$. 
Thus in some sense this return word represents two different return words (because it returns to two different elements of $U$).
We set

$$
\tilde{\mathcal{R}}_{y,U} = \{ (w,u) | wu\in \mathcal{L} (y) , w \in {\mathcal{R}}_{y,U}, u\in U \} . 
$$ 
 
We enumerate the elements $(w,u)$ in $\tilde{\mathcal{R}}_{y,U}$ (resp. $w \in \mathcal{R}_{y,U}$) in the order of the first appearance of $wu$ in $y$ (resp. of $w$ in $y$).
This defines a bijective map

$$
\tilde{\Theta}_{y,U} : \tilde{R}_{y,U}  \rightarrow \tilde{\mathcal{R}}_{y,U} \hbox{ (resp. } \Theta_{y,U} : R_{y,U} \rightarrow \mathcal{R}_{y,U} )
$$

where $\tilde{R}_{y,U} = \{ 1, 2, \dots \}$ and $\# \tilde{R}_{y,U} = \#\tilde{\mathcal{R}}_{y,U}$ (resp. $R_{y,U} = \{ 1, 2 \dots \}$ and  $\# R_{y,U} = \#\mathcal{R}_{y,U}$).
The word $\tilde{\Theta}_{y,U} (i) =(w,u)$ is such that $wu$ is the $i$-th such word occurring in $y$ and $\Theta_{y,U} (i)$ is the $i$-th return word to $U$ occurring in $y$.
 
Let $\delta_U : \tilde{\mathcal{R}}_{y,U}^* \to A^*$ be defined by $\delta_U (w,u) = w$. 

\begin{prop}
\label{prop:def-DU}
Suppose $y\in A^{\mathbb{N}}$ has infinitely many occurrences of elements of $U$ and has a prefix belonging to $U$.
Then, 

\begin{enumerate}
\item
There exists a unique sequence $\Delta_U (y) =(w_n,u_n)_n\in \tilde{\mathcal{R}}_{y,U}^\mathbb{N}$ such that the word
$w_0 w_1\cdots w_n u_n$ is a prefix of $y$;
\item
There exists a unique sequence $\tilde{\mathcal{D}}_U (y)\in \tilde{R}_{y,U}^\mathbb{N}$ satisfying 
$\tilde{\Theta}_{y,U} (\tilde{\mathcal{D}}_U (y)) = \Delta_U (y)$;
\item
There exists a unique sequence $\mathcal{D}_U (y)\in R_{y,U}^\mathbb{N}$ satisfying 
$\Theta_{y,U} (\mathcal{D}_U (y)) = y$;
\item
$\delta_U \tilde{\Theta}_{y,U} (\Tilde{\mathcal{D}}_U (y)) = y$. 
\end{enumerate}
\end{prop}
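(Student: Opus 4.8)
The plan is to build $\Delta_U(y)$ by a greedy left-to-right parsing of $y$ guided by the occurrences of the words of $U$, and then to read off items (2)--(4) from (1) and (3) by purely formal manipulations using the bijectivity of $\tilde\Theta_{y,U}$ and $\Theta_{y,U}$. First I would introduce the occurrence set $O=\{\,i\in\mathbb N \mid i \text{ is an occurrence in } y \text{ of some word of } U\,\}$. By hypothesis $0\in O$ (a prefix of $y$ lies in $U$) and $O$ is infinite, so $O=\{i_0<i_1<i_2<\cdots\}$ with $i_0=0$. Put $w_n:=y_{[i_n,\,i_{n+1}-1]}$ for $n\ge 0$; since $i_n,i_{n+1}$ are consecutive in $O$, each $w_n$ is a return word to $U$. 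Then $w_0w_1\cdots w_n=y_{[0,\,i_{n+1}-1]}$ is a prefix of $y$, and because $i_{n+1}\to+\infty$ the infinite concatenation $w_0w_1w_2\cdots$ equals $y$; this already gives the decomposition needed for (3).

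For (1) I would, at each stage, choose $u_n\in U$ to be a word of $U$ occurring at position $i_{n+1}$ (such a word exists since $i_{n+1}\in O$); then $w_nu_n\in\mathcal L(y)$, so $(w_n,u_n)\in\tilde{\mathcal R}_{y,U}$, and $w_0w_1\cdots w_nu_n=y_{[0,\,i_{n+1}+|u_n|-1]}$ is a prefix of $y$. Setting $\Delta_U(y):=(w_n,u_n)_n$ settles existence in (1). For uniqueness I would argue that the break points of \emph{any} admissible sequence must run exactly through $O$: the break points are themselves occurrences of words of $U$ (because each $w_0\cdots w_n u_n$ is a prefix of $y$ with $u_n\in U$), and the defining property of a return word forces consecutive break points to be consecutive elements of $O$. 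The key elementary fact is that a word $w\in\mathcal R_{y,U}$ can contain no occurrence of a word of $U$ lying entirely inside it — otherwise, looking at a realisation $w=y_{[a,b-1]}$ with $a,b$ consecutive in $O$, one produces an element of $O$ strictly between $a$ and $b$. An induction on $n$ then gives $w_n=y_{[i_n,\,i_{n+1}-1]}$, with $u_n$ the word of $U$ occurring at $i_{n+1}$. I expect the delicate point to be excluding an \emph{overshooting} decomposition in which some component is a return word straddling past an occurrence of a word of $U$; ruling this out is the real content of the uniqueness claim and must again be reduced to the defining property of return words recalled above.

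Granting (1) and the uniqueness of the return word decomposition of $y$, items (2)--(4) are formal. For (2): $\tilde\Theta_{y,U}\colon\tilde R_{y,U}\to\tilde{\mathcal R}_{y,U}$ is a bijection by construction, so $\tilde{\mathcal D}_U(y):=\tilde\Theta_{y,U}^{-1}\!\circ\Delta_U(y)$ is the unique element of $\tilde R_{y,U}^{\mathbb N}$ with $\tilde\Theta_{y,U}(\tilde{\mathcal D}_U(y))=\Delta_U(y)$. For (3): the decomposition $y=w_0w_1w_2\cdots$ has all $w_n\in\mathcal R_{y,U}$ and, by the uniqueness argument above, is the only such decomposition; since $\Theta_{y,U}\colon R_{y,U}\to\mathcal R_{y,U}$ is a bijection, $\mathcal D_U(y):=(\Theta_{y,U}^{-1}(w_n))_n$ is the unique sequence with $\Theta_{y,U}(\mathcal D_U(y))=y$. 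For (4): since $\delta_U(w,u)=w$, extending $\delta_U$ by concatenation gives $\delta_U\,\tilde\Theta_{y,U}(\tilde{\mathcal D}_U(y))=\delta_U(\Delta_U(y))=w_0w_1w_2\cdots=y$.
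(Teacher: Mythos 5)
The paper leaves this proof to the reader, so the only question is whether your argument is complete, and the existence half is: the greedy parsing along the occurrence set $O$ produces $\Delta_U(y)$, and items (2) and (4) are indeed formal consequences. The gap is in the uniqueness half, and it sits exactly where you say you ``expect the delicate point to be''. Your key fact --- a return word $y_{[a,b-1]}$, with $a,b$ consecutive in $O$, contains no occurrence of a word of $U$ lying entirely inside it --- only excludes a skipped element $c$ of $O$ whose $U$-word ends before the next break point. The straddling case cannot ``again be reduced to the defining property of return words'': membership in $\mathcal{R}_{y,U}$ is existential (the word need only equal $y_{[i,j-1]}$ for \emph{some} pair of consecutive occurrences), and at that realizing occurrence the word of $U$ following position $j$ may be a different element of $U$ from the one following the break point you are trying to protect, so the straddling occurrence does not transfer. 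In fact the uniqueness claims in (1) and (3) are false for a general finite $U$. Take $U=\{ab,cd,de\}$ and $y=(abcde\,abcab)^{\infty}$: the occurrences of words of $U$ in the prefix $abcde$ are $0,2,3$, so the greedy parsing begins $ab\cdot c\cdot de\cdots$ with $(w_0,u_0)=(ab,cd)$; but $abc$ is also a return word, realized between the two occurrences of $ab$ inside the factor $abcab$, the pair $(abc,de)$ lies in $\tilde{\mathcal{R}}_{y,U}$, and $abc\cdot de$ is a prefix of $y$ --- the occurrence of $cd$ at position $2$ straddles position $3$ and is simply skipped. Here $\mathcal{R}_{y,U}\supseteq\{ab,c,abc\}$ is not even a code, so both $\Delta_U(y)$ and the factorization in (3) are non-unique. (With words of $U$ of different lengths one can even have two choices of $u_n$ at the same break point, e.g.\ $U\supseteq\{a,ab\}$, which already kills uniqueness in (1).)

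The statement is rescued by the particular structure of the sets $U$ the paper actually uses, namely $U=\phi^{-1}(\{v\})$ with $\phi$ a coding and $v=\phi(u)$: then $O$ is exactly the set of occurrences of the single word $v$ in $x=\phi(y)$, and a return word $w$ to $U$ realized at $[i,j-1]$ is \emph{always} followed at $j$ by an occurrence of the same word $v$ in $x$, so $x_{[i,j+|v|-1]}=\phi(w)v$. A straddling occurrence of $v$ at $c\in(j_n,j_{n+1})$ is contained in $x_{[j_n,\,j_{n+1}+|v|-1]}=\phi(w)v$ as well, hence transfers to an occurrence of $v$ at $i+(c-j_n)$, strictly between $i$ and $j$, contradicting their consecutiveness. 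You should either add this hypothesis (or an equivalent one, e.g.\ that all words of $U$ occurring at a common position coincide and that return words are read off positionally) together with this projection argument, or reformulate the decomposition positionally; as written, the uniqueness step is a genuine gap rather than a routine reduction.
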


\begin{proof}
The proof is left to the reader.
\end{proof}

The case where $y$ does not have any prefix in $U$ is treated in the proof of Theorem \ref{theo:morphicdesc}.

The sequence $\tilde{\mathcal{D}}_U (y)$ is called the {\em induced sequence} on $U$ of $y$ and $\mathcal{D}_U (y)$ is called the {\em derived sequence} on $U$ of $y$.
Observe that if $y$ is uniformly recurrent then these sequences are also uniformly recurrent.

When $U=\{u \}$ we set (and we get) $\delta_u = \delta_U$, ${\mathcal{R}}_{y,u} = \tilde{\mathcal{R}}_{y,U}=\mathcal{R}_{y,U}$ and $\mathcal{D}_u (y) = \tilde{\mathcal{D}}_U (y)= \mathcal{D}_U (y)$.

Let $y$ be uniformly recurrent and $u$ a prefix of $y$. 
Then $R_{y,u}$ is finite. 
When $u$ is the first letter of $y$, that is $u=y_0$, then we set $\mathcal{D} (y) = \mathcal{D}_u (y)$ and 
$\mathcal{D}^n (y) = \mathcal{D} (\mathcal{D}^{n-1} (y))$ for all $n\geq 1$.

\section{Bounds for substitutions and derived sequences}
\label{section:bounds}
For the purpose of this paper we need to generalize the results in \cite{Durand:1998} (dealing with primitive substitution sequences) to uniformly recurrent morphic sequences.

\subsection{Derived sequences}
\label{subsec:deriveprim}

Here we recall some known properties of uniformly recurrent sequences and primitive substitutive sequences.

\begin{prop}[\cite{Durand:1998}]
\label{prop:derder}
Let $x$ be a uniformly recurrent sequence and $u$ a non-empty prefix of $x$.
\begin{enumerate}
\item 
The set $\mathcal{R}_{x,u}$ is a code, i.e., $\Theta_{x,u} : R_{x,u}^* \rightarrow \Theta_{x,u} (R_{x,u} ^*)$ is one-to-one. 
\item 
If $u$ and $v$ are two prefixes of $x$ such that $u$ is a prefix of $ v$ then each return word to $v$ belongs to $\Theta_{x,u} (R_{x,u} ^*)$, i.e., it is a concatenation of return words to $u$.
\item 
Let $v$ be a non-empty prefix of $\mathcal{D}_u (x)$ and $w = \Theta_{x,u}(v)u$. Then 
\begin{itemize}
\item 
$w$ is a  prefix $x$,
\item $\Theta_{x,u}\Theta_{\mathcal{D}_u(x),v} = \Theta_{x,w}$ and
\item $\mathcal{D}_v (\mathcal{D}_u (x)) = \mathcal{D}_w(x)$. 
\end{itemize}
\end{enumerate}
\end{prop}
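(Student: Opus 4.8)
The plan is to prove the three statements in order, each building on the previous one, and each reducing to a careful bookkeeping of occurrences of prefixes in $x$. Throughout, the basic mechanism is that an occurrence of a long prefix is, in particular, an occurrence of any shorter prefix, and that a return word is precisely the block of letters separating two consecutive such occurrences.

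For (1), I would argue that the occurrences of $u$ in $x$ cut $x$ into an infinite concatenation of return words to $u$, and that this decomposition is \emph{the} decomposition: if $w_1 w_2 \cdots w_k = w_1' w_2' \cdots w_\ell'$ with all $w_i, w_j'\in\mathcal R_{x,u}$ and the two sides equal as words, then since each $w_i$ begins with $u$ and has no internal occurrence of $u$, the set of occurrences of $u$ inside the common word is exactly $\{0,|w_1|,|w_1w_2|,\dots\}$ on the one side and $\{0,|w_1'|,\dots\}$ on the other; hence the partial sums coincide, so $w_i=w_i'$ for all $i$ and $k=\ell$. This is exactly the statement that $\mathcal R_{x,u}$ is a code, i.e.\ that $\Theta_{x,u}$ is one-to-one on $R_{x,u}^*$. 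The only subtlety is that $u$ may overlap itself, but since $w_i$ \emph{starts} with $u$ and contains no \emph{other} occurrence of $u$, consecutive occurrences of $u$ are exactly the breakpoints, and uniform recurrence guarantees infinitely many of them.

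For (2), suppose $u$ is a prefix of $v$ and both are prefixes of $x$. Let $r$ be a return word to $v$, so $rv$ appears in $x$ with $r$ starting at an occurrence $i$ of $v$ and the next occurrence of $v$ at $i+|r|$, with no occurrence of $v$ strictly between. Since every occurrence of $v$ is an occurrence of $u$, both $i$ and $i+|r|$ are occurrences of $u$; hence $r$ is a concatenation $w_1\cdots w_k$ of the return words to $u$ that sit between these two occurrences (using part (1) to know this factorization is unambiguous). Thus $r\in\Theta_{x,u}(R_{x,u}^*)$. I would phrase this as: the occurrences of $u$ between two consecutive occurrences of $v$ chop $r$ into return words to $u$.

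For (3), let $v$ be a non-empty prefix of $\mathcal D_u(x)$ and set $w=\Theta_{x,u}(v)u$. First, $\Theta_{x,u}(v)$ is a concatenation $w_1\cdots w_m$ of return words to $u$ corresponding to the prefix $v=r_1\cdots r_m$ of $\mathcal D_u(x)$ (the indices being read off via $\Theta_{x,u}$); by Proposition \ref{prop:def-DU}(3) applied to $\mathcal D_u(x)$ this block $w_1\cdots w_m$ is a prefix of $x$, and appending the next letter-block brings us to the next occurrence of $u$, so $w=w_1\cdots w_m u$ is a prefix of $x$ and $w$ ends with $u$; this is the first bullet. For the second bullet, $\Theta_{x,w}$ enumerates return words to $w$; an occurrence of $w$ in $x$ is an occurrence of $u$ followed, after a return-word pattern spelling $v$, by $u$ again, i.e.\ it corresponds exactly to an occurrence of $v$ in $\mathcal D_u(x)$. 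So a return word to $w$ in $x$ is the image under $\Theta_{x,u}$ of a return word to $v$ in $\mathcal D_u(x)$, which is the identity $\Theta_{x,u}\circ\Theta_{\mathcal D_u(x),v}=\Theta_{x,w}$; one checks it respects the chosen enumerations (first appearance) because $\Theta_{x,u}$ is order-preserving with respect to first occurrences. The third bullet, $\mathcal D_v(\mathcal D_u(x))=\mathcal D_w(x)$, then follows by applying $\Theta_{x,w}^{-1}$ to $x$ and using Proposition \ref{prop:def-DU}(3) together with the second bullet: both sides are the unique sequence sent to $x$ by $\Theta_{x,w}$.

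The main obstacle I anticipate is purely notational rather than conceptual: keeping straight the three coupled enumerations ($R_{x,u}$, $\mathcal R_{x,u}$, and the analogues over $\mathcal D_u(x)$ and over $w$) and verifying that the bijections $\Theta$ respect the "order of first appearance" conventions, so that the displayed equalities hold on the nose and not merely up to relabeling. Checking the order-compatibility of $\Theta_{x,u}$ with first occurrences — and hence that composition of the $\Theta$'s matches $\Theta_{x,w}$ as \emph{enumerated} maps — is the one place where a little care is genuinely needed; everything else is the occurrence-counting argument above. Since the result is quoted from \cite{Durand:1998}, it would also be acceptable to give this sketch and refer there for the verification of the enumeration bookkeeping.
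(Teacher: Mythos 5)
Your proposal is correct and follows the same standard occurrence-counting argument as the source \cite{Durand:1998} to which the paper defers (the paper itself gives no proof of this proposition). One small repair in part (1): a return word $w$ need not begin with $u$ when consecutive occurrences of $u$ overlap (i.e.\ when $|w|<|u|$); the correct statement is that $u$ is a prefix of $wu$ for every $w\in\mathcal{R}_{x,u}$, so the claim that the occurrences of $u$ are exactly the breakpoints should be proved for the word $w_1\cdots w_k u$ (by induction on $k$, using that each $w_iu$ contains exactly two occurrences of $u$) rather than for $w_1\cdots w_k$ itself.
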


\begin{coro}
\label{coro:Dn}
Let $x\in A^\mathbb{N}$ be uniformly recurrent. 
For all $n\geq 1$ there exists a prefix $u_n$ of $x$ such that 

$$
\mathcal{D}^n (x) = \mathcal{D}_{u_n} (x) \hbox{ and } u_{n+1} = \Theta_{x,u_n} (1)u_n .
$$

\end{coro}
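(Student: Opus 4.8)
The plan is a straightforward induction on $n$ whose only genuine ingredient is Proposition~\ref{prop:derder}(3); the prefixes $(u_n)_{n\ge 1}$ are built recursively. For the base case I would take $u_1:=x_0$, the first letter of $x$, so that $\mathcal{D}^1(x)=\mathcal{D}(x)=\mathcal{D}_{x_0}(x)=\mathcal{D}_{u_1}(x)$ holds by the very definition of $\mathcal{D}$. Assuming $u_n$ has been constructed as a prefix of $x$ with $\mathcal{D}^n(x)=\mathcal{D}_{u_n}(x)$, I would set $u_{n+1}:=\Theta_{x,u_n}(1)u_n$; it then remains to check that $u_{n+1}$ is a prefix of $x$ and that $\mathcal{D}^{n+1}(x)=\mathcal{D}_{u_{n+1}}(x)$.

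The step on which everything turns is the elementary observation that the derived sequence $z:=\mathcal{D}_{u_n}(x)\in R_{x,u_n}^{\mathbb{N}}$ begins with the letter $1$. Indeed, by Proposition~\ref{prop:def-DU}(3) one has $\Theta_{x,u_n}(z)=x$, so $\Theta_{x,u_n}(z_0)$ is a prefix of $x$; since $u_n$ is a prefix of $x$, its first occurrence in $x$ is at position $0$, whence the return word to $u_n$ whose first appearance in $x$ is earliest is precisely this prefix $\Theta_{x,u_n}(z_0)$. By the definition of the enumeration $\Theta_{x,u_n}$ (return words listed by order of first appearance) that earliest return word is $\Theta_{x,u_n}(1)$, and injectivity of $\Theta_{x,u_n}$ forces $z_0=1$. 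Consequently the length-one prefix of $z$ is the word $v=1$, and, $z$ being uniformly recurrent (a property inherited from $x$, as noted after Proposition~\ref{prop:def-DU}), the definition of $\mathcal{D}$ gives $\mathcal{D}(z)=\mathcal{D}_{v}(z)=\mathcal{D}_{1}(z)$.

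To finish, I would apply Proposition~\ref{prop:derder}(3) to the uniformly recurrent sequence $x$, to its prefix $u_n$, and to the non-empty prefix $v=1$ of $\mathcal{D}_{u_n}(x)=z$. With $w:=\Theta_{x,u_n}(v)u_n=\Theta_{x,u_n}(1)u_n=u_{n+1}$, the proposition yields that $w$ is a prefix of $x$ and that $\mathcal{D}_{v}(\mathcal{D}_{u_n}(x))=\mathcal{D}_{w}(x)$. Chaining the identities,
$$
\mathcal{D}^{n+1}(x)=\mathcal{D}(\mathcal{D}^{n}(x))=\mathcal{D}(\mathcal{D}_{u_n}(x))=\mathcal{D}(z)=\mathcal{D}_{v}(z)=\mathcal{D}_{w}(x)=\mathcal{D}_{u_{n+1}}(x),
$$
which closes the induction. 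The only point requiring care is the identification $z_0=1$: matching the abstract operator $\mathcal{D}(\cdot)$, defined through the first letter of its argument, with the operator $\mathcal{D}_{v}(\cdot)$ for the length-one prefix $v=1$ that makes Proposition~\ref{prop:derder}(3) applicable. The finiteness of $R_{x,u_n}$, hence the legitimacy of iterating $\mathcal{D}$ at each stage, is guaranteed by the uniform recurrence of $x$, and all remaining manipulations are routine unwinding of the definitions.
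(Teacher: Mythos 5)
Your proof is correct and follows exactly the route the paper intends: the statement is given as an immediate corollary of Proposition~\ref{prop:derder}(3), and your induction --- taking $u_1=x_0$, observing that $\mathcal{D}_{u_n}(x)$ begins with the letter $1$ because $\Theta_{x,u_n}(1)$ is by convention the return word occurring at position $0$, and then applying Proposition~\ref{prop:derder}(3) with $v=1$ to get $u_{n+1}=\Theta_{x,u_n}(1)u_n$ --- is precisely the omitted verification. Nothing is missing.
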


\begin{coro}
\label{coro:theta}
Let $x$ be a uniformly recurrent sequence and $u$ a non-empty prefix of $x$.
If $u$ is a prefix of $v$ and $v$ is a prefix of $x$ then there is a unique morphism $\Theta $ satisfying $\Theta_{x,u} \Theta = \Theta_{x,v}$.
\end{coro}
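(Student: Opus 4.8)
The plan is to deduce this directly from Proposition \ref{prop:derder}(2) together with the fact (Proposition \ref{prop:derder}(1)) that $\mathcal{R}_{x,u}$ is a code. First I would observe that since $u$ is a prefix of $v$ and $v$ is a prefix of $x$, part (2) of Proposition \ref{prop:derder} applies: every return word to $v$ lies in $\Theta_{x,u}(R_{x,u}^*)$. Hence for each generator $j\in R_{x,v}$, the word $\Theta_{x,v}(j)$ is a (necessarily unique, by the coding property in part (1)) concatenation of return words to $u$, say $\Theta_{x,v}(j)=\Theta_{x,u}(i_1\cdots i_k)$ with $i_1,\dots,i_k\in R_{x,u}$. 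Define $\Theta(j)=i_1\cdots i_k$; extending by concatenation gives a morphism $\Theta:R_{x,v}^*\to R_{x,u}^*$, and by construction $\Theta_{x,u}\Theta=\Theta_{x,v}$ on generators, hence on all of $R_{x,v}^*$.

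For uniqueness, suppose $\Theta'$ is another morphism with $\Theta_{x,u}\Theta'=\Theta_{x,v}$. Then for every $j$ we have $\Theta_{x,u}(\Theta(j))=\Theta_{x,u}(\Theta'(j))$, and since $\Theta_{x,u}$ restricted to $R_{x,u}^*$ is one-to-one (Proposition \ref{prop:derder}(1)), $\Theta(j)=\Theta'(j)$; thus $\Theta=\Theta'$. One should also check $\Theta$ is well defined, i.e. that the decomposition of each $\Theta_{x,v}(j)$ into return words to $u$ is unique — but this is again immediate from the coding property, since two distinct factorizations over the code $\mathcal{R}_{x,u}$ would violate injectivity of $\Theta_{x,u}$ on $R_{x,u}^*$.

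I do not expect any serious obstacle here; the statement is essentially a repackaging of parts (1) and (2) of Proposition \ref{prop:derder}, the only mild subtlety being to make sure that "return word to $v$ is a concatenation of return words to $u$" yields a concatenation whose factorization is unique (so that $\Theta$ is genuinely a morphism and not merely a relation). That subtlety is dispatched by the code property. An alternative, more conceptual route would be to use Proposition \ref{prop:derder}(3): writing $w=\Theta_{x,u}(v')u$ for a suitable prefix $v'$ of $\mathcal{D}_u(x)$ and invoking $\Theta_{x,u}\Theta_{\mathcal{D}_u(x),v'}=\Theta_{x,w}$ — but identifying $\Theta_{x,w}$ with $\Theta_{x,v}$ requires matching up prefixes, so the first approach is cleaner.
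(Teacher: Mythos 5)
Your argument is correct and is exactly the intended deduction: the paper states this as an immediate corollary of Proposition \ref{prop:derder}, and your use of part (2) to get existence of the factorization and part (1) (the code property) to get both well-definedness and uniqueness is the standard and evidently intended route. No gaps.
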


The next lemma asserts that the derived sequences of purely primitive substitutive sequences are purely primitive substitutive sequences.

\begin{prop}[\cite{Durand:1998}]
\label{prop:derivedsub}
Let $x=\sigma^\infty (a)$ where $\sigma : A^* \to A^*$ is a primitive substitution prolongable on $a$, and, $u$ be
a non-empty prefix of $x$. 
The derived sequence $\mathcal{D}_u (x) $ is the unique
fixed point of the unique substitution, $\sigma_{u}$, satisfying

$$
\Theta_{x,u} \sigma_u = \sigma \Theta_{x,u}.
$$  

Moreover, $\sigma_u$ is prolongable on the letter $1$.
\end{prop}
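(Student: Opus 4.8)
The plan is to build the substitution $\sigma_u$ directly on the alphabet $R_{x,u}$ of return words to $u$, using the relation $\Theta_{x,u}\sigma_u = \sigma\Theta_{x,u}$ as a definition, and then to check that this is well posed. First I would fix $i\in R_{x,u}$, let $w=\Theta_{x,u}(i)$ be the corresponding return word to $u$, and consider $\sigma(w)$. Since $w$ is a factor of $x$ with $wu\in\mathcal{L}(x)$ (by definition of a return word, $w$ is followed by an occurrence of $u$), the word $\sigma(wu)=\sigma(w)\sigma(u)$ appears in $x$, and because $u$ is a prefix of $x=\sigma^\infty(a)$ and $\sigma$ is prolongable on $a$, the word $u$ is a prefix of $\sigma(u)$, hence $\sigma(w)u\in\mathcal{L}(x)$. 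The key point, which I would establish from Proposition~\ref{prop:derder}(2) (with the pair of prefixes being $u$ itself and nothing larger) together with the code property (1), is that $\sigma(w)$ \emph{begins with an occurrence of $u$ and ends exactly at an occurrence of $u$, with the occurrences of $u$ inside $\sigma(w)$ decomposing it uniquely into return words to $u$}; more precisely, every occurrence of $u$ in $x$ falls at a boundary between consecutive return words (this is the content of the fact that $\mathcal{R}_{x,u}$ is a code and tiles $x$), so $\sigma(w)$ is a concatenation $\Theta_{x,u}(i_1)\cdots\Theta_{x,u}(i_k)$ of return words, and we set $\sigma_u(i)=i_1\cdots i_k$. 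One must check this decomposition is independent of which occurrence of $w$ in $x$ we picked to compute $\sigma(w)$: this follows because the factorisation of any factor of $x$ into return words to $u$ is forced (again the code property plus the fact that occurrences of $u$ are intrinsic to the word, not to its position), so $\sigma_u$ is well defined, and it is the unique morphism satisfying $\Theta_{x,u}\sigma_u=\sigma\Theta_{x,u}$ by injectivity of $\Theta_{x,u}$ on $R_{x,u}^*$.

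Next I would show $\sigma_u$ is prolongable on the letter $1$. By Corollary~\ref{coro:Dn} (or directly), the first return word $\Theta_{x,u}(1)$ begins with $u$, and $u$ is a prefix of $x$; applying $\sigma$, the word $\sigma(\Theta_{x,u}(1))$ is a prefix of $x$ beginning with $u$, so its return-word decomposition starts with $\Theta_{x,u}(1)$, i.e., $\sigma_u(1)$ begins with $1$. To see that $|\sigma_u^n(1)|\to\infty$: we have $\Theta_{x,u}(\sigma_u^n(1))=\sigma^n(\Theta_{x,u}(1))$ as a prefix of $x$, whose length tends to infinity since $\sigma$ is a substitution (growing) and $\Theta_{x,u}(1)$ is a non-empty word; as $\Theta_{x,u}$ is length-nondecreasing on each letter, $|\sigma_u^n(1)|\to\infty$ as well. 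Hence $\sigma_u$ is prolongable on $1$, and its fixed point $\sigma_u^\infty(1)$ satisfies $\Theta_{x,u}(\sigma_u^\infty(1))=\lim_n\sigma^n(\Theta_{x,u}(1))u'\cdots = x$ after checking the prefixes match (using $\Theta_{x,u}\sigma_u=\sigma\Theta_{x,u}$ iteratively), so $\sigma_u^\infty(1)$ is the derived sequence: indeed $\Theta_{x,u}(\sigma_u^\infty(1))=x=\Theta_{x,u}(\mathcal{D}_u(x))$ forces $\sigma_u^\infty(1)=\mathcal{D}_u(x)$ by injectivity.

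Finally I would verify primitivity of $\sigma_u$. The natural argument: since $\sigma$ is primitive, for a suitable $N$ every letter $\sigma^N(a)$ ($a\in A$) contains all letters of $A$; consequently, for $w$ any return word to $u$, $\sigma^N(w)$ is a factor of $x$ long enough to contain an occurrence of $\Theta_{x,u}(1)u$ and in fact, because $\sigma^N(w)$ has length growing uniformly and $x$ is uniformly recurrent, $\sigma^{N'}(w)$ contains every return word to $u$ for $N'$ large and uniform in $w$ (uniform recurrence of $x$ bounds the gaps between occurrences of the finitely many return words). This gives a power of $M_{\sigma_u}$ with strictly positive entries, so $\sigma_u$ is primitive. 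The main obstacle I expect is the well-definedness step — making rigorous that $\sigma(w)$ decomposes canonically into return words and that this decomposition does not depend on the chosen occurrence of $w$ in $x$; this is exactly where Proposition~\ref{prop:derder}(1)--(2) must be invoked carefully, and it is the crux that makes the morphism $\sigma_u$ (and hence the whole inductive machinery of the paper) legitimate.
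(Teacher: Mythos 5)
The paper gives no proof of this proposition---it is imported verbatim from \cite{Durand:1998}---and your proposal is a correct reconstruction of the argument given there: define $\sigma_u(i)$ by the forced decomposition of $\sigma(\Theta_{x,u}(i))$ into return words (legitimate because $u$ is a prefix of $\sigma(u)$, so $\sigma(\Theta_{x,u}(i))u$ begins and ends at occurrences of $u$, and the cutting points are intrinsic occurrences of $u$, whence well-definedness and, via the code property of Proposition \ref{prop:derder}, uniqueness of $\sigma_u$), then deduce prolongability on $1$, the identification $\sigma_u^\infty(1)=\mathcal{D}_u(x)$ by injectivity of $\Theta_{x,u}$, and primitivity from linear recurrence of $x$. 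The one point you leave implicit is why the fixed point is \emph{unique} (a primitive substitution may a priori be prolongable on several letters), but this is a minor gap relative to the cited source and everything else matches the standard proof.
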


The substitution $\sigma_u$ is called the {\em return substitution} on $u$.

\begin{lemma}
\cite{Durand:2012}
\label{lemma:returnsubalgo}
Let $x=\sigma^\infty (a)$ where $\sigma : A^* \to A^*$ is a primitive substitution prolongable on $a$.
Let $u$ be a non-empty prefix of $x$.
Then, $\sigma_u$ and $\Theta_{x,u}$ are algorithmically computable. 
\end{lemma}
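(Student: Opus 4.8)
The plan is to extract both objects from a sufficiently long, explicitly computable, prefix of $x$. Everything hinges on an \emph{a priori} bound: I claim there is a computable integer $N(u)$ such that every return word to $u$ has length at most $N(u)$ and, moreover, for every $w\in\mathcal{R}_{x,u}$ the word $wu$ occurs inside the prefix $x_{[0,N(u)-1]}$. Granting this, one first computes $x_{[0,N(u)-1]}$: since $\sigma$ is primitive it is non‑erasing and all its letters grow, so $|\sigma^m(a)|$ (a column sum of $M_\sigma^m$) is nondecreasing and tends to infinity; evaluating $\sigma^m(a)$ for the least $m$ with $|\sigma^m(a)|\ge N(u)$ produces the desired prefix. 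Scanning it, one lists all occurrences of $u$, reads off the words lying between consecutive occurrences to obtain $\mathcal{R}_{x,u}$, and records the order in which these words first occur; as $U=\{u\}$ is a singleton one has $\tilde{\mathcal{R}}_{x,u}=\mathcal{R}_{x,u}$, so this is precisely the bijection $\Theta_{x,u}\colon R_{x,u}\to\mathcal{R}_{x,u}$, presented as a finite table.

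Obtaining the bound $N(u)$ is where primitivity is used and is the main obstacle. Put $d=\#A$ and $r=d^2-2d+2$; by Lemma~\ref{lemme:horn} the matrix $M_\sigma^r$ has strictly positive entries, so every letter of $A$ occurs in $\sigma^r(b)$ for every $b\in A$ (and $\langle\sigma^r\rangle\ge d\ge 2$). From this a standard and effective computation (as in \cite{Durand:1998}, resting on the fact that every factor of length $n$ of $x$ lies in a block $\sigma^k(bc)$ with $|\sigma^k|$ bounded linearly in $n$, and that such a block, by positivity of $M_\sigma^r$, occurs in every window of controlled length) yields a computable constant $C_\sigma$ with $R_x(n)\le C_\sigma n$ for all $n\ge 1$, where $R_x(n)$ is the least integer such that every factor of $x$ of that length contains every factor of length $n$. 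Two consecutive occurrences of $u$ then differ by at most $R_x(|u|)$, so every return word to $u$ has length at most $R_x(|u|)$; and each word $wu$, of length at most $R_x(|u|)+|u|$, occurs within the first $R_x\!\left(R_x(|u|)+|u|\right)$ letters of $x$. Hence one may take $N(u):=R_x\!\left(R_x(|u|)+|u|\right)\le C_\sigma(C_\sigma+1)|u|$.

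It remains to compute $\sigma_u$. By Proposition~\ref{prop:derivedsub} the substitution $\sigma_u$ exists and is characterised by $\Theta_{x,u}\sigma_u=\sigma\Theta_{x,u}$, and by Proposition~\ref{prop:derder}(1) the set $\mathcal{R}_{x,u}$ is a code. Fix $i\in R_{x,u}=\{1,\dots,\#\mathcal{R}_{x,u}\}$ and set $w_i=\Theta_{x,u}(i)$. Evaluating the defining relation at the letter $i$ gives $\sigma(w_i)=\Theta_{x,u}(\sigma_u(i))$, so $\sigma(w_i)$ lies in the submonoid generated by $\mathcal{R}_{x,u}$; since $\mathcal{R}_{x,u}$ is a finite code, $\sigma(w_i)$ admits a unique factorisation $w_{j_1}w_{j_2}\cdots w_{j_k}$ into return words, which one finds effectively by parsing $\sigma(w_i)$ over the finite set $\mathcal{R}_{x,u}$ already computed. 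Then $\sigma_u(i)=j_1j_2\cdots j_k$, and doing this for every $i$ produces $\sigma_u$; that it is a substitution prolongable on the letter $1$ is again part of Proposition~\ref{prop:derivedsub}. As every step — computing the prefix, listing occurrences of $u$, parsing over a finite code — is carried out on finite data, $\sigma_u$ and $\Theta_{x,u}$ are algorithmically computable.
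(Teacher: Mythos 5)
Your proof is correct and follows essentially the standard route: the paper itself gives no proof of this lemma (it only cites \cite{Durand:2012}), and the argument there is exactly of this shape --- use the computable linear-recurrence constant of Proposition \ref{prop:sublinrec} (with Lemmas \ref{lemma:maxmin} and \ref{lemma:boundRsigma}) to bound the lengths of return words and the window in which all words $wu$ first occur, compute that prefix by iterating $\sigma$ on $a$, read off $\mathcal{R}_{x,u}$ and the order of first occurrences to get $\Theta_{x,u}$, and then obtain $\sigma_u(i)$ by uniquely parsing $\sigma(\Theta_{x,u}(i))$ over the code $\mathcal{R}_{x,u}$ via the relation $\Theta_{x,u}\sigma_u=\sigma\Theta_{x,u}$. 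All the ingredients you invoke (computability of the constant, the code property from Proposition \ref{prop:derder}, existence of $\sigma_u$ from Proposition \ref{prop:derivedsub}) are available, so there is no gap.
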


\subsection{Computable bounds for linear recurrence of substitutive sequences}

In the sequel we give some computable bounds concerning the return substitutions and their linear recurrence constants.
They will be important for the decidability problem we are interested in this paper. 
We will use the notation introduced in Proposition \ref{prop:decomprim}.
In the sequel whenever $M = (m_{i,j})$ is a square matrix, $m^{(n)}_{i,j}$ will denote the entry $(i,j)$ of $M^n$.
We recall that, from Perron's theorem, the dominant eigenvalue $\alpha$ of a primitive matrix $M$ is equal to the spectral radius $\rho (M)$ of $M$, and, the eigenvalue $\alpha$ has an eigenvector with strictly positive entries.
Moreover, $M$ has an eigenvector associated with $\alpha$ having positive entries.

\begin{lemma}
\label{lemma:minmaxperron}
Let $M=(m_{i,j})$ be a matrix with positive entries and $x=(x_i)$ be its unique (right-)eigenvector with positive entries such that $\sum x_i = 1$.
Then, for all $i$,

\begin{equation}
\label{eq:minmaxperron}
\min_j
\frac{ m_{i,j}}{\sum_{k} m_{k,j}}
\leq 
x_i
\leq
\max_j
\frac{ m_{i,j}}{\sum_{k} m_{k,j}} .
\end{equation}
\end{lemma}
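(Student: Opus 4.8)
The plan is to exploit the classical fact that the Perron eigenvector can be located by looking at the column sums of $M$. Write $x = (x_i)$ for the positive right-eigenvector normalized by $\sum_i x_i = 1$, and let $\alpha = \rho(M)$ be the Perron eigenvalue, so $Mx = \alpha x$. The starting point is simply to read off, for a fixed row $i$, what $Mx = \alpha x$ says coordinatewise:
\[
\alpha x_i = \sum_j m_{i,j} x_j .
\]
On its own this does not immediately give the claimed bound, because $\alpha$ is not visible in the right-hand side of \eqref{eq:minmaxperron}. The trick is to bring in the column sums $s_j := \sum_k m_{k,j}$ and to use the fact that $\alpha$ is squeezed between $\min_j s_j$ and $\max_j s_j$ (the standard bound for the spectral radius of a nonnegative matrix in terms of its column sums, which follows from Perron--Frobenius applied to $M^{\mathsf T}$, or from Collatz--Wielandt). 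Actually, for the statement as written I suspect the cleaner route avoids even needing that bound on $\alpha$.

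The cleaner route I would take: sum the eigenvector equation over $i$. Summing $\alpha x_i = \sum_j m_{i,j} x_j$ over all $i$ gives
\[
\alpha = \alpha \sum_i x_i = \sum_i \sum_j m_{i,j} x_j = \sum_j \Bigl(\sum_i m_{i,j}\Bigr) x_j = \sum_j s_j x_j ,
\]
using $\sum_i x_i = 1$. So $\alpha$ is a convex combination (with weights $x_j$) of the column sums $s_j$. Now divide the fixed-row equation $\alpha x_i = \sum_j m_{i,j} x_j$ through, writing $m_{i,j} = (m_{i,j}/s_j)\, s_j$:
\[
\alpha x_i = \sum_j \frac{m_{i,j}}{s_j}\, s_j x_j .
\]
Since $\sum_j s_j x_j = \alpha$ and all $s_j x_j \ge 0$, the right-hand side is $\alpha$ times a convex combination of the numbers $m_{i,j}/s_j$. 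Hence $x_i$ itself lies between $\min_j m_{i,j}/s_j$ and $\max_j m_{i,j}/s_j$, which is exactly \eqref{eq:minmaxperron}. The positivity of all entries of $M$ guarantees $s_j > 0$ for every $j$ (so the quotients are well-defined) and guarantees that the convex-combination weights $s_j x_j/\alpha$ are genuinely positive, though one only needs nonnegativity for the inequality.

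The only place requiring a word of care — and the step I would flag as the potential obstacle — is the existence and uniqueness of the normalized positive eigenvector $x$ and the identification $\alpha = \rho(M)$: this is Perron's theorem for strictly positive matrices, which the excerpt has already recalled, so I would just cite it. One should also note that the same eigenvector works for $M$ and $M^{\mathsf T}$ only in the sense that both have simple Perron roots; here we only use the right-eigenvector of $M$, so no subtlety arises. A final sanity check: if $M$ has constant column sums $s_j \equiv s$, then the right-hand bounds collapse and force $x_i = m_{i,j}/s$ for every $j$, i.e. all columns of $M$ are proportional to $x$ — consistent with $\alpha = s$ in that case. This confirms the inequalities are tight in the natural degenerate case, and the proof above is complete modulo the cited Perron theorem.
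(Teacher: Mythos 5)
Your proof is correct, and it takes a genuinely different route from the paper's. The paper invokes the ratio form of Perron's theorem, $\lim_{n\to\infty} m^{(n)}_{i,j}/\sum_{k} m^{(n)}_{k,j} = x_i$, and derives the inequalities by contradiction from the iterates of $M$; the mechanism left implicit in its ``we can easily obtain'' step is that the ratio for $M^{n+1}$ is a weighted average of the quantities $m_{i,l}/s_l$, so the limit cannot escape their min and max. You bypass the iterates and the limit entirely: summing the eigenvector equation over $i$ gives $\alpha=\sum_j s_j x_j$, and then $x_i=\sum_j (m_{i,j}/s_j)\,(s_j x_j/\alpha)$ exhibits $x_i$ directly as a convex combination of the numbers $m_{i,j}/s_j$, which is exactly the claim. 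This is more elementary (you need only the existence of the positive eigenvector, not the convergence statement) and it makes the averaging argument explicit rather than hidden inside a limsup. One small caveat, which does not affect the proof: your closing ``sanity check'' is off. Constant column sums $s_j\equiv s$ force $\alpha=s$ but do not make the two bounds collapse; for $M=\left(\begin{smallmatrix}1&2\\2&1\end{smallmatrix}\right)$ both column sums equal $3$, yet for $i=1$ the bounds are $1/3$ and $2/3$ while $x_1=1/2$ lies strictly between them. The bounds collapse only when, for the fixed row $i$, all the ratios $m_{i,j}/s_j$ coincide.
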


\begin{proof}
Perron's theorem implies that (see Proposition 5.8 in \cite{Queffelec:2010})
$$
\lim_{n\to \infty}
\frac{ m^{(n)}_{i,j}}{\sum_{k} m^{(n)}_{k,j}}
=
x_i .
$$

If we suppose that the left inequality in \eqref{eq:minmaxperron} does not hold then we can easily obtain the following contradiction: 

$$
\limsup_{n\to \infty}
\frac{ m^{(n)}_{i,j}}{\sum_{k} m^{(n)}_{k,j}}
>
x_i .
$$

The right inequality can be treated in the same way.
\end{proof}

\begin{lemma}
[\cite{Horn&Johnson:1990}]
\label{lemma:hornjohnson1990}
Let $M =(m_{i,j})$ be a $d\times d $-matrix with non-negative entries.
If $M$ has a positive eigenvector $x$, then for all $n=1,2, \dots$ and all $i=1, \dots , d$ we have

$$
\left[
\frac{\min_{1\leq k \leq d} x_k}{\max_{1\leq k \leq d} x_k} 
\right]
\rho (M)^n
\leq
\sum_{j=1}^d m_{ij}^{(n)} 
\leq 
\left[
\frac{\max_{1\leq k \leq d} x_k}{\min_{1\leq k\leq d} x_k} 
\right]
\rho (M)^n.
$$
\end{lemma}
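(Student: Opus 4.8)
The plan is to extract from the hypothesis the single fact that makes the estimate work --- namely that the eigenvalue of $M$ attached to the positive eigenvector $x$ is the spectral radius $\rho(M)$ --- and then to sandwich the row sums $\sum_{j} m_{ij}^{(n)}$ of $M^n$ between $\min_k x_k$ and $\max_k x_k$.

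First, write $Mx=\lambda x$. As $M$ has non-negative entries and $x>0$ entrywise, $Mx\ge 0$, hence $\lambda\ge 0$ (it is real, since $\lambda x=Mx$ is real and $x\ne 0$). If $\lambda=0$ then $\sum_{j}m_{ij}x_j=0$ for each $i$, which forces $m_{ij}=0$ for all $i,j$ because $x_j>0$; then $M=0$ and the asserted inequalities are the trivial $0\le 0\le 0$. So assume $\lambda>0$. To see $\lambda=\rho(M)$, invoke the Perron--Frobenius theorem for non-negative matrices: there is a non-negative, non-zero column vector $y$ with $M^{T}y=\rho(M)\,y$, so $y^{T}M=\rho(M)\,y^{T}$. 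Then
$$
\rho(M)\,y^{T}x \;=\; y^{T}(Mx) \;=\; \lambda\,y^{T}x ,
$$
and $y^{T}x>0$ because $x>0$ and $y\ge 0$, $y\ne 0$; hence $\lambda=\rho(M)$. (One can also avoid quoting Perron--Frobenius in this generality: the identity displayed below gives $\max_i\sum_j m_{ij}^{(n)}\le (\max_k x_k/\min_k x_k)\,\lambda^n$, so $\rho(M)=\lim_n\|M^n\|_\infty^{1/n}\le\lambda$, while $\lambda\le\rho(M)$ is automatic.)

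Now iterate: $M^nx=\rho(M)^n x$ for all $n\ge 1$, that is,
$$
\sum_{j=1}^d m_{ij}^{(n)}\,x_j \;=\; \rho(M)^n x_i \qquad (1\le i\le d).
$$
Since each $m_{ij}^{(n)}\ge 0$, bounding $x_j$ above by $\max_k x_k$ and below by $\min_k x_k$ gives
$$
\Bigl(\min_{1\le k\le d}x_k\Bigr)\sum_{j=1}^d m_{ij}^{(n)} \;\le\; \rho(M)^n x_i \;\le\; \Bigl(\max_{1\le k\le d}x_k\Bigr)\sum_{j=1}^d m_{ij}^{(n)} .
$$
Dividing the right-hand inequality by $\max_k x_k$ and using $x_i\ge\min_k x_k$ yields the lower bound on $\sum_j m_{ij}^{(n)}$; dividing the left-hand inequality by $\min_k x_k$ and using $x_i\le\max_k x_k$ yields the upper bound. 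This is precisely the claimed double inequality.

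The whole argument hinges on the identification $\lambda=\rho(M)$; after that the estimate is a one-line sandwich, so the only point requiring care is the appeal to the correct form of Perron--Frobenius (a non-negative matrix has a non-negative left eigenvector for its spectral radius), or, equivalently, the use of Gelfand's formula $\rho(M)=\lim_n\|M^n\|_\infty^{1/n}$ as indicated above.
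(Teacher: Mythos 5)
Your proof is correct. The paper gives no proof of this lemma --- it is quoted directly from Horn and Johnson --- and your argument (reduce to $Mx=\rho(M)x$ via the identification of the eigenvalue attached to a positive eigenvector with the spectral radius, then sandwich $\sum_j m_{ij}^{(n)}x_j=\rho(M)^n x_i$ between $\min_k x_k$ and $\max_k x_k$ times the row sum) is exactly the standard one from that reference, with the degenerate case $M=0$ and the identification $\lambda=\rho(M)$ both handled cleanly.
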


\begin{lemma}
\label{lemma:matrix}
Let $M= (m_{i,j})$ be a $d\times d$-matrix and $M'= (m'_{k,l})$ be a $d'\times d'$-matrix, both being primitive.
Then, there exists a computable constant $Q$ such that for all $i,j,k,l,n$ one gets 
$
(1/Q) m'^{(n)}_{i,j}
\leq
m^{(n)}_{i,j} 
\leq 
Qm'^{(n)}_{i,j} .
$
\end{lemma}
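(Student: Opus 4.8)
The plan is to bound each of $M$ and $M'$ separately against a pure power of its Perron eigenvalue and then to combine the two estimates. Observe first that such a two-sided comparison can hold only when $\rho(M)=\rho(M')=:\alpha$ (already $M=(2)$, $M'=(3)$ refutes it otherwise), which is exactly the situation in which the lemma gets applied: $M'$ is the incidence matrix of a primitive substitution $\sigma$ and $M$ that of one of its return substitutions $\sigma_u$, and the intertwining relation $\Theta_{x,u}\sigma_u=\sigma\Theta_{x,u}$ of Proposition~\ref{prop:derivedsub}, read on incidence matrices, forces the dominant eigenvalues to coincide (apply the transpose of that relation to the positive left Perron eigenvector of $M_\sigma$ to obtain a positive eigenvector of $M_{\sigma_u}^{T}$ for the eigenvalue $\rho(M_\sigma)$).

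The first step is a statement about a single primitive matrix: for a primitive $d\times d$ matrix $P$ with Perron eigenvalue $\beta$ there is a \emph{computable} $C\ge 1$ with $C^{-1}\beta^n\le (P^n)_{i,j}\le C\beta^n$ for all $i,j$ and all $n\ge n_0$, where $n_0:=d^2-2d+2$ (so $P^{n_0}>0$ by Lemma~\ref{lemme:horn}, whence $P^n>0$ for all $n\ge n_0$, a primitive matrix having neither a zero row nor a zero column). The upper bound is immediate: $(P^n)_{i,j}$ is at most the $i$-th row sum of $P^n$, which by Lemma~\ref{lemma:hornjohnson1990} applied to $P$ with its positive right Perron eigenvector $x$ is at most $(\max_k x_k/\min_k x_k)\,\beta^n$. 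For the lower bound, write $n=n_0+m$ with $m\ge 0$, apply Lemma~\ref{lemma:hornjohnson1990} to $P^{T}$ (positive eigenvector: the left Perron eigenvector $w$ of $P$) to bound the $j$-th column sum of $P^m$ below by $(\min_k w_k/\max_k w_k)\,\beta^m$, choose $k_0$ with $(P^m)_{k_0,j}\ge d^{-1}(\min_k w_k/\max_k w_k)\,\beta^m$, and conclude $(P^n)_{i,j}\ge (P^{n_0})_{i,k_0}(P^m)_{k_0,j}\ge\bigl(\min_{i',k'}(P^{n_0})_{i',k'}\bigr)\,d^{-1}(\min_k w_k/\max_k w_k)\,\beta^{-n_0}\beta^n$. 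Effectiveness is the one real point: the eigenvector ratios occurring here are squeezed between explicit rationals by Lemma~\ref{lemma:minmaxperron} applied to the positive matrices $P^{n_0}$ and $(P^{n_0})^{T}$ (which carry the Perron eigenvectors of $P$ and $P^{T}$), while $\beta^{n_0}$ is dominated by the largest row sum of $P^{n_0}$ and $\min_{i',k'}(P^{n_0})_{i',k'}$ is read off directly; hence $C$ is computable.

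Applying this with $P=M$ and with $P=M'$ produces computable $C$ and $C'$; set $n_1:=\max(d^2-2d+2,\,d'^2-2d'+2)$. For $n\ge n_1$ and all indices, $(M^n)_{i,j}\le C\alpha^n\le CC'\,(M'^n)_{k,l}$ and, symmetrically, $(M^n)_{i,j}\ge C^{-1}\alpha^n\ge (CC')^{-1}(M'^n)_{k,l}$, so $Q_0:=CC'$ works for all $n\ge n_1$. For the finitely many $n<n_1$ one enlarges the constant, taking $Q$ to be the maximum of $Q_0$ together with the finitely many well-defined ratios $(M^n)_{i,j}/(M'^n)_{k,l}$ and $(M'^n)_{k,l}/(M^n)_{i,j}$; when $M$ and $M'$ are moreover positive --- the only case actually needed, since one first passes to suitable positive powers via Lemma~\ref{lemme:horn} --- all entries are positive and this last step is trivial.

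The main obstacle is the effectiveness of the first step, i.e.\ squeezing honest computable constants out of Perron--Frobenius data; this is precisely what Lemmas~\ref{lemme:horn}, \ref{lemma:minmaxperron} and \ref{lemma:hornjohnson1990} were prepared for. Granting them, the combination of the two one-matrix bounds and the treatment of the finitely many small $n$ are routine.
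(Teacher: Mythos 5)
Your proof is correct and takes the same route as the paper, whose entire proof of this lemma is the single sentence ``This comes from Perron's theorem, Lemma \ref{lemma:minmaxperron} and Lemma \ref{lemma:hornjohnson1990}''; you have simply carried out in full the argument that citation gestures at, using exactly those ingredients together with Lemma \ref{lemme:horn}. Your preliminary observation is also well taken and goes beyond the paper: as literally stated the lemma is false without the implicit hypothesis $\rho(M)=\rho(M')$ (and without either positivity or a restriction to large $n$, since a vanishing entry of $M^n$ against a positive entry of $M'^n$ already defeats any finite $Q$), and your repair --- equal Perron eigenvalues forced by the intertwining relation in the intended application, plus passage to positive powers --- is the right one.
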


\begin{proof}
This comes from Perron's theorem, Lemma \ref{lemma:minmaxperron} and Lemma \ref{lemma:hornjohnson1990}.
\end{proof}

We recall that for each endomorphism $\sigma$ we have, for all $n$, $|\sigma^n | = \max_j \sum_i m_{ij}^{(n)} $ and  
$\langle \sigma^n \rangle = \min_j \sum_i m_{ij}^{(n)} $.

\begin{lemma}
\label{lemma:maxmin}
Let $\sigma $ be an endomorphism and $\alpha >1$ such that all letters of $A$ has a growth type $(0 , \alpha )$. 
There exists two computable constants $P_\sigma $ and $Q_\sigma$ such that, for all $k$, the following holds:

\begin{equation}
\label{eq:prelim}
(1/P_\sigma ) \alpha^k \leq \langle \sigma^k \rangle \leq |\sigma^k | \leq P_\sigma \alpha^k \hbox{ and }
\end{equation}
\begin{equation}
\label{eq:constantmaxmin}
|\sigma^k | \leq Q_\sigma \langle \sigma^k \rangle .
\end{equation}

\end{lemma}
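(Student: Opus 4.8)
The plan is to work with the matrix $M=M_\sigma$ and the identities $|\sigma^n|=\max_j\sum_i m^{(n)}_{ij}$, $\langle\sigma^n\rangle=\min_j\sum_i m^{(n)}_{ij}$, i.e.\ the largest and smallest column sums of $M^n$. I would first reduce to a power: writing $k=r_\sigma m+s$ with $0\le s<r_\sigma$ and $\tau=\sigma^{r_\sigma}$, one has $\langle\sigma^s\rangle\langle\tau^m\rangle\le\langle\sigma^k\rangle\le|\sigma^k|\le|\sigma^s|\,|\tau^m|$ with $1\le\langle\sigma^s\rangle$ and $|\sigma^s|\le|\sigma|^{r_\sigma}$, so it is enough to prove \eqref{eq:prelim} for $\tau$ (with $\alpha$ replaced by $\beta:=\alpha^{r_\sigma}$), up to a computable factor. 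By Proposition~\ref{prop:decomprim}, $M_\tau$ has the displayed block lower-triangular form with primitive-or-zero diagonal blocks $M_1,\dots,M_l$, and $\beta=\rho(M_\tau)=\max_i\rho(M_i)$. Call $M_i$ \emph{large} if $\rho(M_i)=\beta$ and \emph{small} otherwise; let $\gamma$ be the largest small spectral radius and fix a computable rational $\gamma'$ with $\gamma<\gamma'<\beta$ (possible since $\gamma<\beta$ and both are effectively approximable). The hypothesis that every letter has $\tau$-growth type $(0,\beta)$ translates, via Proposition~\ref{prop:croissance} and the analysis behind it, into: (i) from every letter one reaches a large block; and (ii) no large block is reachable from a distinct large block. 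Hence any walk in the letter graph of $\tau$ meets at most one large block.

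For the upper bound I would estimate $|\tau^n(c)|$, the number of length-$n$ walks starting at $c$, by cutting such a walk at the (unique, if any) large block $A_p$ it meets: the initial and final stretches stay inside small blocks, the middle stretch is an internal $M_p$-walk. Applying Lemma~\ref{lemma:hornjohnson1990} inside each primitive block — the Perron eigenvector ratio $\max x_k/\min x_k$ occurring there being made computable by Lemmas~\ref{lemme:horn} and \ref{lemma:minmaxperron} — one bounds the number of length-$\ell$ walks staying in small blocks by $C\gamma'^{\ell}$, the number of internal length-$\ell$ walks of $M_p$ by $C\beta^{\ell}$, and the two transition edges by a fixed factor. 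So a walk meeting $A_p$ and spending $w$ steps outside it is counted at most $C'\beta^n(\gamma'/\beta)^w$ times, with at most $w+1$ ways to split these $w$ steps before/after $A_p$; summing over $w\ge0$ gives $C'\beta^n/(1-\gamma'/\beta)^2$, and summing over the finitely many large blocks plus the walks that never leave the small part yields $|\tau^n|\le P'\beta^n$ with $P'$ computable. For the lower bound, for each letter $c$ fix a large block $A_p$ reachable from $c$ and one fixed walk of length $m_c\le\#A$ from $c$ into $A_p$; concatenating it with the internal $M_p$-walks of length $n-m_c$ gives that many distinct length-$n$ walks from $c$, so the lower estimate in Lemma~\ref{lemma:hornjohnson1990} (again made computable) yields $|\tau^n(c)|\ge\beta^{n-m_c}/P''\ge\beta^n/(P''\beta^{\#A})$ for $n\ge m_c$, the finitely many smaller $n$ being trivial. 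Taking $\tilde P=\max(P',P''\beta^{\#A})$ gives \eqref{eq:prelim} for $\tau$, hence for $\sigma$ by the reduction above, and \eqref{eq:constantmaxmin} follows at once with $Q_\sigma=P_\sigma^2$ since $|\sigma^k|\le P_\sigma\alpha^k\le P_\sigma^2\langle\sigma^k\rangle$.

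The main obstacle is getting an effective upper bound with no polynomial factor. A priori, alternating small and large stretches could accumulate a factor $n^{l-1}$, which would destroy \eqref{eq:prelim}; what prevents it is the conjunction of $\alpha>1$ (so $\gamma'/\beta<1$) and fact (ii) (only one large block per walk, hence a single convergent series $\sum_{w\ge0}(w+1)(\gamma'/\beta)^w$ rather than a multiple one), combined with the fact that $\gamma'$ and all the Perron constants can be produced computably — precisely the purpose of Lemmas~\ref{lemme:horn}, \ref{lemma:minmaxperron} and \ref{lemma:hornjohnson1990}. The reduction to a power, the zero blocks (which no walk can stay in), and the rounding of the real constants obtained to computable rationals are then routine.
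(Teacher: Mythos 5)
Your proof is correct, but it follows a genuinely different route from the paper's. The paper proves \eqref{eq:prelim} by induction on the number $l$ of primitive components of $M_\sigma$: the base case is the primitive one (Perron's theorem together with Lemmas \ref{lemma:minmaxperron} and \ref{lemma:hornjohnson1990}), and the inductive step peels off the top block $A_1$, applies the induction hypothesis to the $\sigma$-invariant sub-alphabet $B=\cup_{k\geq 2}A_k$, and concludes from a recursive length identity of the form $|\sigma^k(a)|=|\sigma_1^k(a)|+\sum_i|\sigma_{/B}^{k-1}(c_i)|$. You instead argue globally: after reducing to $\tau=\sigma^{r_\sigma}$, you classify the primitive components as large or small, extract from the growth-type hypothesis the two structural facts that every letter reaches a large component and that no walk meets two distinct large components, and then count length-$n$ walks by the time spent outside the unique large component met, summing a convergent geometric series. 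The paper's induction is shorter on the page but is stated very tersely (the displayed length identity only records the letters entering $B$ at the first step, and the argument never makes explicit why the $A_1$-internal and $B$-internal contributions cannot combine to produce a polynomial factor $k\alpha^k$); your facts (i) and (ii) are precisely what rules this out, so your version makes visible the role of the hypothesis $d(a)=0$ that the paper leaves implicit. Both proofs draw their computability from the same sources (Lemmas \ref{lemme:horn}, \ref{lemma:minmaxperron}, \ref{lemma:hornjohnson1990} and Proposition \ref{prop:decomprim}), and both obtain \eqref{eq:constantmaxmin} as the trivial consequence $Q_\sigma=P_\sigma^2$ of \eqref{eq:prelim}.
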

\begin{proof}
It suffices to prove \eqref{eq:prelim}.
Let $M= (m_{i,j})$ be the incidence matrix of $\sigma$.

When $\sigma $ is primitive,
we conclude with Perron's theorem, Lemma \ref{lemma:hornjohnson1990} and Lemma \ref{lemma:minmaxperron}.

Let consider the non-primitive case.
For all positive integer $l$, consider the following statement.

\begin{center}
$S(l)$
If $M$ has $l$ primitive components, then there exists a computable constant $P_\sigma$ such that, for all $k$, \eqref{eq:prelim} holds.
\end{center}

It suffices to prove $S(l)$ for all $l$.
We will proceed by induction.
Notice that $S(1)$ is true because it corresponds to the primitive case.
We suppose $S(l)$ is true. 
Let us prove $S(l+1)$ also holds.

Let $\{ A_k | 1\leq k\leq l+1 \}$ be a partition of $A$ that is organized as in Proposition \ref{prop:decomprim}. 
Consider the primitive matrices $M'_k = (m_{i,j})_{i\in A_k,j\in A_k}$.  
With the notation of Proposition \ref{prop:decomprim} we get ${M'}_k^{r_\sigma} = M_k$.
Let $B = \cup_{2\leq k\leq l} A_k$ and $N = (m_{i,j})_{i\in B,j\in B}$.
Remark that $\sigma_{/B}$, the restriction of $\sigma$ to $B^*$, is an endomorphism whose incidence matrix is $N$.
>From the hypothesis, the spectral radius of $N$ is $\alpha$.
Observe that $N$ has $l$ primitive components and thus, from the induction hypothesis, there exists a computable constant $P$ such that \eqref{eq:prelim} holds.

Let $\sigma_1$ be an endomorphism whose incidence matrix is $M_1$.
As $M_1$ is primitive, there exists a computable constant $P_1$ such that \eqref{eq:prelim} holds for $\sigma_1$.

Let $a\in A_1$.
We set $\sigma (a) = c_0 c_1 \cdots c_{|\sigma (a)|-1}$ where the $c_i$'s belong to $A$.
We have:

$$
|\sigma^k (a)| = |\sigma_1^k (a)| + \sum_{0\leq i\leq |\sigma (a)|-1} |\sigma_{/B}^{k-1} (c_i)| .
$$

This achieves the proof.
\end{proof}

A sequence $x$ is said to be {\em linearly recurrent} (for the constant $K$) if it is uniformly recurrent and the difference between two successive occurrences of any word $u$ is bounded by $K|u|$.

In the sequel $R_\sigma$ will denote the maximal difference between two successive occurrences of a word of length $2$ in any word on $\mathcal{L} ( \sigma )$.

\begin{lemma}
\label{lemma:boundRsigma}
If $\sigma $ is a primitive substitution defined on an alphabet with $d$ letters and prolongable on the letter $a$, then $R_\sigma$ is bounded by $2|\sigma^{2d^2}|$.
\end{lemma}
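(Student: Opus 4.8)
The plan is to bound $R_\sigma$ by controlling two things: how long a factor of $\mathcal{L}(\sigma)$ one must read before seeing every letter of $A$, and how the gaps between occurrences of a length-$2$ word grow under one application of $\sigma$. First I would recall that $M_\sigma$ is primitive on $d$ letters, so by Lemma \ref{lemme:horn} the matrix $M_\sigma^{d^2-2d+2}$ has strictly positive entries; in particular, for every letter $b\in A$ the word $\sigma^{d^2-2d+2}(b)$ contains every letter of $A$, and a fortiori $\sigma^{d^2}(b)$ already contains $a$ (since $d^2-2d+2\le d^2$). Because $\sigma$ is prolongable on $a$, the word $\sigma^{d^2}(a)$ starts with $a$ and has length at least $2$, so the length-$2$ prefix $w_0$ of $x=\sigma^\infty(a)$ occurs in $\sigma^{d^2}(b)$ for every $b$ that is not the last letter of its block — more carefully, $w_0$ occurs in $\sigma^{2d^2}(b)$ for \emph{every} $b\in A$, because $\sigma^{d^2}(b)$ contains the letter $a$ somewhere, and applying $\sigma^{d^2}$ again turns that occurrence of $a$ into a factor containing $\sigma^{d^2}(a)$, which begins with $w_0$ (one may need the occurrence of $a$ to be non-final, but a second letter is available after it inside $\sigma^{d^2}$ of a longer word, so taking $2d^2$ iterations is the safe choice).

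The key step is then the following observation. Let $u=x_ix_{i+1}$ be any word of length $2$ appearing in a word $v\in\mathcal{L}(\sigma)$, and consider the gap between two successive occurrences of $u$ in $v$. I would instead bound $R_\sigma$ directly: take any $v\in\mathcal{L}(\sigma)$ and any two successive occurrences of a fixed length-$2$ word $u$ in $v$. Since $v\in\mathcal{L}(\sigma)$, $v$ is a factor of some $\sigma^n(a)$, hence $v$ is a factor of a concatenation of at most... — rather, $v$ occurs inside $\sigma^{2d^2}(b)$ for some letter $b$, after possibly passing to a suitable factor: every word of $\mathcal{L}(\sigma)$ of bounded length occurs in $\sigma^{2d^2}(c)$ for some $c\in A$. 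Actually the cleanest route is: $w_0$, the length-$2$ prefix of $x$, occurs in $\sigma^{2d^2}(c)$ for every $c\in A$ by the paragraph above, so $w_0$ occurs in $\sigma^{2d^2}(c_jc_{j+1})$ for every factor $c_jc_{j+1}$ of $x$; consequently in the word $\sigma^{2d^2}(x)=x$, between the image of position $j$ and the image of position $j+2$ there is always an occurrence of $w_0$, which forces two successive occurrences of $w_0$ in $x$ to be at distance at most $2|\sigma^{2d^2}|$. Since every length-$2$ word $u\in\mathcal{L}(\sigma)=\mathcal{L}(x)$ occurs in $x$ and the same argument applies verbatim with $u$ in place of $w_0$ once we know $u$ occurs in every $\sigma^{2d^2}(c)$ — which again follows from positivity of $M_\sigma^{d^2}$ applied twice — we conclude $R_\sigma\le 2|\sigma^{2d^2}|$.

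I would therefore organize the proof as: (1) $M_\sigma^{d^2}$ has positive entries, so for each $c\in A$ the word $\sigma^{d^2}(c)$ contains every letter, in particular the first two letters of $x$ appear, giving that $\sigma^{2d^2}(c)$ contains every length-$2$ factor of $x$; (2) hence for any length-$2$ word $u\in\mathcal{L}(\sigma)$ and any two consecutive letters $x_jx_{j+1}$ of $x$, the block $\sigma^{2d^2}(x_jx_{j+1})$ contains an occurrence of $u$; (3) writing $x=\sigma^{2d^2}(x)$ and using that each letter contributes a block of length at most $|\sigma^{2d^2}|$, two consecutive occurrences of $u$ in $x$ are separated by at most $2|\sigma^{2d^2}|$ positions; (4) since any $v\in\mathcal{L}(\sigma)$ is a factor of $x$ (because $\sigma$ is primitive, $\mathcal{L}(\sigma)=\mathcal{L}(\sigma^\infty(a))$, stated in the excerpt), the same bound holds inside $v$, giving $R_\sigma\le 2|\sigma^{2d^2}|$.

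The main obstacle is the boundary bookkeeping in step (2)–(3): an occurrence of $u$ guaranteed inside a single block $\sigma^{2d^2}(x_j)$ is fine, but one must make sure the occurrences of $u$ we use to bound the gap genuinely lie in $\mathcal{L}(x)$ and that passing to a factor $v$ of $x$ does not lose the occurrence straddling block boundaries; the clean fix is to only ever use occurrences of $u$ that lie \emph{within} a single image block $\sigma^{2d^2}(x_j)$ — guaranteed by step (1) since $u$ appears in $\sigma^{2d^2}(x_j)$ as an internal factor — so that two consecutive such occurrences are at distance at most $|\sigma^{2d^2}(x_j)|+|\sigma^{2d^2}(x_{j+1})|\le 2|\sigma^{2d^2}|$, and there is one in every two consecutive blocks. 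I expect steps (1) and (4) to be immediate from results already in the excerpt (Lemma \ref{lemme:horn} and the primitivity remark), with all the real care concentrated in the straddling argument of step (3).
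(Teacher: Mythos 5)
Your overall strategy is the same as the paper's: show that for a suitable power $N$ every image $\sigma^{N}(c)$, $c\in A$, contains every length-$2$ factor of $x=\sigma^\infty(a)$, then decompose $x=\sigma^{N}(x)$ into blocks and read off the factor $2$ in the bound; your step (3)--(4) bookkeeping (using only occurrences internal to a single block, and transferring from $x$ to arbitrary $v\in\mathcal{L}(\sigma)$ via $\mathcal{L}(\sigma)=\mathcal{L}(x)$) is correct and is exactly what the paper leaves implicit.

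There is, however, a genuine gap in your justification of the central claim. Positivity of $M_\sigma^{d^2}$ (from Lemma \ref{lemme:horn}) only controls which \emph{letters} occur in $\sigma^{d^2}(c)$; it says nothing by itself about which \emph{length-$2$ words} occur. Your argument for the prefix $w_0=x_0x_1$ is sound, but only because it uses a property special to $a$: the occurrence of $a$ inside $\sigma^{d^2}(c)$ blows up to a factor containing $\sigma^{d^2}(a)$, and $\sigma^{d^2}(a)$ is a \emph{prefix} of $x$ of length at least $2$, hence begins with $w_0$. For a general $u=x_ix_{i+1}\in\mathcal{L}_2(x)$ the phrase ``the same argument applies verbatim'' fails: such a $u$ need not lie inside any $\sigma^{k}(e)$ for a single letter $e$ — it may only ever arise as a straddling word, i.e.\ as (last letter of $\sigma^k(b)$)(first letter of $\sigma^k(c)$) for some $bc\in\mathcal{L}_2(x)$ — so ``positivity of $M_\sigma^{d^2}$ applied twice'' does not produce it. What is actually needed is the combinatorial claim the paper's proof rests on (and dismisses as ``easy to check''): for all $k\geq d^2$ the word $\sigma^k(a)$ already contains every word of $\mathcal{L}_2(x)$. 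This follows from a separate stabilization argument — the sets $\mathcal{L}_2(\sigma^k(a))$ are non-decreasing in $k$ (as $\sigma^k(a)$ is a prefix of $\sigma^{k+1}(a)$), their union is $\mathcal{L}_2(x)$, and together with the letter sets of $\sigma^k(a)$ they stabilize after boundedly many steps since each new level is a monotone function of the previous one in a finite lattice. Once that claim is in place, your argument closes: $\sigma^{d^2}(c)$ contains $a$ for every $c$, hence $\sigma^{2d^2}(c)$ contains $\sigma^{d^2}(a)$ and therefore all of $\mathcal{L}_2(x)$, and your block decomposition gives $R_\sigma\leq 2|\sigma^{2d^2}|$. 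You should add this stabilization step explicitly rather than attributing the claim to matrix positivity.
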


\begin{proof}
It is easy to check that for all $k\geq d^2$, any word $\sigma^k (a)$ has an occurrence of all words of length 2 appearing in $\sigma^\infty (a)$.
Therefore, from Lemma \ref{lemme:horn}, $R_\sigma$ is bounded by $2|\sigma^{2d^2}|$.
\end{proof}

\begin{prop}[\cite{Durand:1998}]
\label{prop:sublinrec}
Let $x=\sigma^\infty (a)$, where $\sigma$ is a primitive substitution prolongable on $a$. 
Then $x$ is linearly recurrent for the constant $K_\sigma = Q_\sigma R_\sigma |\sigma|$.
\end{prop}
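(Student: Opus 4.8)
The plan is to show that the gap between two successive occurrences of an arbitrary word $u$ in $x=\sigma^\infty(a)$ is bounded by $K_\sigma|u|$ with $K_\sigma = Q_\sigma R_\sigma |\sigma|$. First I would reduce to the case of words whose length is comparable to $|\sigma^k|$ for a suitable $k$: given a non-empty $u\in\mathcal{L}(x)$, pick the unique $k\geq 0$ with $\langle\sigma^k\rangle\leq |u| < \langle\sigma^{k+1}\rangle$ (using $\langle\sigma\rangle\geq 1$, so $k$ is well defined; for very short $u$ one handles $k=0$ directly). The point of this choice is that $|u|\geq \langle\sigma^k\rangle$, so in any desubstituted preimage the word $u$ straddles at most a controlled number of $\sigma^k$-blocks.

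Next I would use the combinatorial structure of $x$ as a concatenation of blocks $\sigma^k(b)$, $b\in A$: write $x=\sigma^k(x)$, so $x$ is the concatenation $\sigma^k(x_0)\sigma^k(x_1)\cdots$. Any occurrence of $u$ in $x$ is covered by a window of consecutive blocks $\sigma^k(x_i)\sigma^k(x_{i+1})\cdots\sigma^k(x_{i+m})$; since $|u|<\langle\sigma^{k+1}\rangle\leq |\sigma|\cdot|\sigma^k|$ roughly (more precisely $|u|<\langle\sigma^{k+1}\rangle \leq |\sigma^k|\cdot|\sigma|$), the window needs only $O(|\sigma|)$ blocks — I would extract the clean bound that $u$ occurs inside $\sigma^k(vw)$ for some length-$2$ word $vw\in\mathcal{L}(\sigma)$, possibly with a bit of padding, or within the image of at most $|\sigma|+1$ consecutive letters. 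Because every length-$2$ word of $\mathcal{L}(\sigma)$ reappears within distance $R_\sigma$ in $x$ (by definition of $R_\sigma$), the corresponding length-$2$ word (or the short superword of length $\leq |\sigma|+1$ containing the relevant letters) reoccurs after at most $R_\sigma$ letters of $x$; applying $\sigma^k$ and bounding $|\sigma^k(\cdot)|$ by $|\sigma^k|$ times the number of letters, the occurrence of $u$ reoccurs after at most $R_\sigma|\sigma^k|$ letters. Here I must be a little careful: desubstituting $u$ to a word of length $2$ in $\mathcal{L}(\sigma)$ rather than just in $\mathcal{L}(x)$ is exactly what lets me invoke $R_\sigma$, and I would make sure the chosen $k$ is large enough ($k\geq d^2$, say, cf. Lemma~\ref{lemma:boundRsigma}) that all these length-$2$ words actually occur, enlarging $k$ by an additive constant absorbed into the final constant if necessary.

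Finally I would convert the bound $R_\sigma|\sigma^k|$ into the stated $K_\sigma|u|$ using Lemma~\ref{lemma:maxmin}: from $|u|\geq\langle\sigma^k\rangle$ and $|\sigma^k|\leq Q_\sigma\langle\sigma^k\rangle$ we get $|\sigma^k|\leq Q_\sigma|u|$, hence the gap is at most $R_\sigma Q_\sigma|u|$; the extra factor $|\sigma|$ in $K_\sigma = Q_\sigma R_\sigma|\sigma|$ comes from the slack in the reduction step (the window of $\leq |\sigma|+1$ letters, or the difference between using $\langle\sigma^{k+1}\rangle$ and $|\sigma|\langle\sigma^k\rangle$, or from needing $\sigma^{k+1}$-blocks rather than $\sigma^k$-blocks to cover $u$). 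I expect the main obstacle to be the bookkeeping in this reduction: precisely identifying, from an occurrence of $u$ and the known lower bound $|u|\geq\langle\sigma^k\rangle$, a short word in $\mathcal{L}(\sigma)$ (of length $\leq 2$, or uniformly bounded) whose occurrences in $x$ control those of $u$, so that $R_\sigma$ applies and the exponents of $\alpha$ match up after applying $\sigma^k$. Everything else is a routine chain of the inequalities established in Lemmas~\ref{lemma:maxmin} and~\ref{lemma:boundRsigma}.
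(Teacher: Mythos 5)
The paper does not actually prove this proposition; it is imported from \cite{Durand:1998}, and your outline reconstructs the standard argument for it. The approach is right, but you must commit at the covering step, where you list several alternatives of which only one works. With your choice of $k$ (namely $\langle\sigma^k\rangle\leq |u|<\langle\sigma^{k+1}\rangle$), an occurrence of $u$ can straddle on the order of $|\sigma|Q_\sigma$ consecutive $\sigma^k$-blocks, so the claim that $u$ occurs inside $\sigma^k(vw)$ for some length-$2$ word $vw$ is false at level $k$, and a ``window of at most $|\sigma|+1$ letters'' is unusable because $R_\sigma$ only bounds the recurrence of words of length $2$, not of longer words. The correct option is the last one you mention in passing: since $|u|<\langle\sigma^{k+1}\rangle$, every occurrence of $u$ in $x=\sigma^{k+1}(x)$ lies inside $\sigma^{k+1}(x_ix_{i+1})$, where $i$ is the index of the level-$(k+1)$ block in which the occurrence starts; the next occurrence $j\leq i+R_\sigma$ of the length-$2$ word $x_ix_{i+1}$ then produces a new occurrence of $u$ at distance $|\sigma^{k+1}(x_{[i,j-1]})|\leq R_\sigma|\sigma^{k+1}|\leq R_\sigma|\sigma|\,|\sigma^k|\leq R_\sigma|\sigma|Q_\sigma\langle\sigma^k\rangle\leq K_\sigma|u|$, using \eqref{eq:constantmaxmin}. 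Your two remaining worries are unnecessary: by primitivity $\mathcal{L}(\sigma)=\mathcal{L}(x)$, so the distinction you raise does not arise, and $R_\sigma$ is by definition the recurrence bound for length-$2$ factors of $\mathcal{L}(\sigma)$ (Lemma~\ref{lemma:boundRsigma} only serves to bound it explicitly), so no condition of the form $k\geq d^2$ is needed.
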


Let $\sigma $ be a substitution. 
>From Proposition \ref{prop:decomprim} there always exists a sub-alphabet $B\subset A$ such that $\sigma (B)$ is a subset of $B^*$ and the incidence matrix of $\sigma_{/B}$ is primitive.
We say that $\sigma_{/B}$ is a {\em primitive sub-morphism} of $\sigma$.
Observe that $\sigma $ has finitely many primitive sub-morphisms.

\begin{coro}
\label{coro:Rsigma}
Let $x=\phi (\sigma^\infty (a))$ be a uniformly recurrent sequence where $\phi$ is a coding and $\sigma :A^*\to A^*$ is a substitution prolongable on $a$.
Then, $x$ is linearly recurrent for the constant $K$ where $K$ is the infimum of the $Q_\tau R_\tau |\tau |$, $\tau$ running over all primitive sub-morphisms of $\sigma$.
\end{coro}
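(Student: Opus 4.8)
The plan is to deduce the statement from the purely primitive case, Proposition \ref{prop:sublinrec}, by using uniform recurrence of $x$ to show that the language of $x$ is, via the coding $\phi$, the language of a fixed point of a primitive sub-morphism of $\sigma$. Write $y=\sigma^\infty(a)$, so that $x=\phi(y)$.

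I would first arrange, replacing $\sigma$ by a suitable power (which alters neither $y$ nor $x$) and invoking Proposition \ref{prop:decomprim}, that $\sigma$ has a terminal primitive component $B$ (i.e. $\sigma(B)\subseteq B^*$ and $M_{\sigma_{/B}}$ primitive) which is reachable from $a$ in the substitution graph and carries a letter $b$ with $\sigma(b)$ beginning with $b$. Then $\tau:=\sigma_{/B}$ is a primitive substitution --- it is growing because $\sigma$ is --- prolongable on $b$, and $b$ occurs in $y$; moreover $\mathcal{L}(\tau^\infty(b))=\mathcal{L}(\tau)$, so that $|\tau|$, $R_\tau$ and $Q_\tau$ depend only on the sub-morphism.

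The crux is the equality $\mathcal{L}(x)=\mathcal{L}(\phi(\tau^\infty(b)))$. For the inclusion $\subseteq$: since $b$ occurs in $y=\sigma^k(y)$, the word $\tau^k(b)=\sigma^k(b)$ occurs in $y$, so $\phi(\tau^k(b))$ occurs in $x$, and $|\tau^k(b)|\to\infty$ as every letter of $\sigma$ grows. Given a factor $w$ of $x$, uniform recurrence of $x$ provides $L$ such that every length-$L$ factor of $x$ contains $w$; choosing $k$ with $|\tau^k(b)|\ge L$, the factor $\phi(\tau^k(b))$ of $x$ contains $w$, so $w=\phi(v)$ with $v$ a factor of $\tau^k(b)$, and since $\tau$ is prolongable on $b$ we have $v\in\mathcal{L}(\tau^\infty(b))$; thus $w\in\phi(\mathcal{L}(\tau^\infty(b)))=\mathcal{L}(\phi(\tau^\infty(b)))$. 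Now $\tau^\infty(b)$ is primitive substitutive, hence uniformly recurrent, and codings preserve uniform recurrence, so $\phi(\tau^\infty(b))$ is uniformly recurrent; since a language inclusion between two uniformly recurrent sequences is an equality (minimality of the associated subshifts), the equality follows.

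It remains to transfer the bound. By Proposition \ref{prop:sublinrec}, $\tau^\infty(b)$ is linearly recurrent for $Q_\tau R_\tau|\tau|$, and a coding cannot increase this constant: if $w$ is a factor of $\phi(\tau^\infty(b))$ of length $n$ and $w'$ is a factor of $\tau^\infty(b)$ with $\phi(w')=w$, then every occurrence of $w'$ is an occurrence of $w$, so two successive occurrences of $w$ are at distance at most $Q_\tau R_\tau|\tau|\,|w'|=Q_\tau R_\tau|\tau|\,n$. Hence $\phi(\tau^\infty(b))$, and therefore $x$ (having the same language, hence the same subshift), is linearly recurrent for $Q_\tau R_\tau|\tau|$; keeping the best such bound over the primitive sub-morphisms of $\sigma$ gives the constant $K$. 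All the data in play --- the power of $\sigma$, the partition of Proposition \ref{prop:decomprim}, and then $|\tau|$, $R_\tau$ (Lemma \ref{lemma:boundRsigma}) and $Q_\tau$ (Lemma \ref{lemma:maxmin}) --- are computable, so $K$ is effective. I expect the real work to be in the second paragraph: showing that uniform recurrence of $\phi(y)$ forces the existence of such a $\tau$, i.e. that $x$ cannot, like a generic morphic sequence, read off several primitive components separated by blocks of unbounded length.
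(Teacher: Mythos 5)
Your proof follows the same route as the paper's: pass to a primitive sub-morphism $\tau$ that is (after taking a power) prolongable on a letter $b$ occurring in $y$, use uniform recurrence of $x$ and minimality to identify $\mathcal{L}(x)$ with $\mathcal{L}(\phi(\tau^\infty(b)))$, and conclude with Proposition \ref{prop:sublinrec}; the paper's proof is essentially a four-line version of yours, and the existence of such a $\tau$ (your worry at the end) is automatic from Proposition \ref{prop:decomprim} rather than being the hard part. The only point to watch is that to reach the stated infimum you must run the argument for every primitive sub-morphism, not only the one terminal component you single out; this works under the paper's standing assumption that all letters of $A$ occur in $y$, and the paper's own proof is no more explicit about it.
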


\begin{proof}
Let $\tau : B^* \to B^*$ be a primitive sub-morphism of $\sigma$. 
Taking a power of $\tau$ if needed, we can suppose $\sigma_{/B}$ is prolongable on some letter $b$.
Notice that, $\phi (\sigma^\infty (b))$ being uniformly recurrent, we get $\mathcal{L} (\phi (\sigma^\infty (b))) = \mathcal{L} (x)$.
We conclude with Proposition \ref{prop:sublinrec}.
\end{proof}

\begin{theo}[\cite{Durand&Host&Skau:1999}]
\label{theo:encad}
Let $x$ be a non-periodic linearly recurrent sequence for the constant $K$. 
For all words $u$ of $\mathcal{L} (x)$, 
\begin{enumerate}
\item
for all $n$, every word of $\mathcal{L}_n (x)$ has an occurrence in each word of $\mathcal{L}_{K(n+1)} (x)$,
\item 
\label{theo:item:1}
for all words $v$ belonging to $\mathcal{R}_{x,u}$, $\frac{1}{K}| u| \leq | v| \leq K  | u|$, 
\item 
\label{theo:item:2}
$\# (\mathcal{R}_{x,u})\leq 4K^3$,
\item
$p_x(n) \leq (K+1)n$ for all $n$.
\end{enumerate}
\end{theo}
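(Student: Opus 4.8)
The plan is to prove the four assertions in the stated order, since (3) and (4) will rest on (1) and (2); throughout I use only uniform recurrence, the defining bound $K|u|$ on the gap between two consecutive occurrences of a factor $u$, and the remark that $K\ge 1$.

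For (1) I would first record the syndeticity consequence of linear recurrence: once past its first occurrence, a factor $u$ of length $n$ reappears inside every interval of length $Kn$, because two consecutive occurrences are at distance at most $Kn$. Hence any window whose length is linear in $n$ and which sits far enough to the right contains a complete occurrence of $u$; and since, by uniform recurrence, every factor of $x$ also occurs arbitrarily far to the right, a window of the required linear length — the statement asks for $K(n+1)$ — must contain $u$ wherever it sits. The only delicate point is to check that the occurrence of $u$ produced fits entirely inside the window, which is where the precise linear bound has to be tracked.

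For (2), the upper bound $|v|\le K|u|$ is exactly the defining property, a return word to $u$ being the block delimited by two consecutive occurrences of $u$. The lower bound $|v|\ge |u|/K$ is the heart of the theorem and, I expect, the main obstacle; I would prove it by contradiction. If some $v\in\mathcal{R}_{x,u}$ satisfied $K|v|<|u|$, then comparing the two consecutive occurrences of $u$ delimiting $v$ shows that $u$ has $|v|$ as a period, that $v$ is the length-$|v|$ prefix of $u$, and — since no occurrence of $u$ lies strictly between the two — that $v$ is primitive and $|v|$ is the minimal period of $u$. Thus $u$ is a prefix of the purely periodic sequence $v^{\infty}$; since $x$ is non-periodic and uniformly recurrent its orbit closure contains no periodic point, so $v^{\infty}$ is not a factor of $x$ and there is a longest factor $W$ of $x$ with period $|v|$, necessarily of length at least $|v|+|u|$. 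I would then examine the occurrence of $u$ lying nearest the right end of an occurrence of $W$: its successor occurrence of $u$ cannot merely continue the period (that would enlarge $W$), and combining this with the primitivity of $v$ and the already-proved upper bound yields the contradiction. Making this last step airtight — in particular excluding configurations where the periodic stretch resumes after a short defect — is precisely where non-periodicity is genuinely used, and is the trickiest point.

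Finally (3) and (4) follow formally. For (3): by (1) there is a single factor $D$ of $x$, of length of order $K^{2}|u|$, containing every factor of length $(K+1)|u|$, hence in particular every word of the form $vu$ with $v\in\mathcal{R}_{x,u}$; inside such an occurrence of $vu$ the word $u$ appears exactly at its two ends, so $v$ occurs as a genuine return word inside $D$. Since by (2) consecutive occurrences of $u$ are at distance at least $|u|/K$, the number of occurrences of $u$ inside $D$, and hence $\#\mathcal{R}_{x,u}$, is of order $K^{3}$; tracking the constants gives the bound $4K^{3}$. For (4): by (1) all $p_{x}(n)$ words of $\mathcal{L}_{n}(x)$ already occur inside one factor of length $K(n+1)$, which carries at most $(K-1)n+K+1$ factors of length $n$, so $p_{x}(n)$ is linear in $n$; sharpening the constant to $(K+1)n$ for every $n$ needs a small extra argument, for instance bounding the first difference $p_{x}(n+1)-p_{x}(n)$ by controlling the right-special factors, whose number linear recurrence keeps bounded.
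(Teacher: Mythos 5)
The paper itself does not prove this theorem; it is quoted from \cite{Durand&Host&Skau:1999}, so your attempt can only be measured against the standard argument there. Items (1), (3) and (4) of your sketch do follow that argument: (1) is the syndeticity computation, and (3), (4) are the counting consequences you describe. The genuine gap is in item (2), the lower bound $\frac{1}{K}|u|\leq |v|$, which you yourself flag as ``the trickiest point'' and leave open: your plan via a longest factor $W$ of $x$ with period $|v|$ and an analysis of the defect at its right end is not carried out, and the case analysis needed there (an occurrence of $u$ that is close to, but not in phase with, the periodic continuation; the gluing of overlapping periodic words via Fine and Wilf) is exactly where such arguments tend to break. Since this inequality is the one the rest of the paper leans on (it drives Proposition \ref{prop:boundedpreimages}, Lemma \ref{lemma:concatenationrw} and the constant $K_1$ in Theorem \ref{theo:derbound}), leaving it at the level of a plan is a real gap.

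The standard completion is much shorter and avoids the longest periodic factor altogether. Suppose $v\in\mathcal{R}_{x,u}$ with $K|v|<|u|$. As you observe, the two occurrences of $u$ delimiting $v$ overlap, so $vu$ has period $|v|$ and is a prefix of $v^{\infty}$ of length $|u|+|v|>(K+1)|v|$; hence $v^{K+1}\in\mathcal{L}(x)$. Now apply item (1) with $n=|v|$: every word of $\mathcal{L}_{|v|}(x)$ occurs in $v^{K+1}$, and every factor of length $|v|$ of $v^{K+1}$ is already a factor of $v^{2}$, so $p_x(|v|)\leq |v|$. By the Morse--Hedlund theorem $x$ is ultimately periodic, hence (being uniformly recurrent) periodic, a contradiction. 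Note that this use of (1) requires the window length $(K+1)n$ rather than $K(n+1)$ as stated: the two differ for $n>K$, the elementary syndeticity argument you sketch for (1) only yields $(K+1)n$ (as you suspect, the $K(n+1)$ version does not close for large $n$), and $(K+1)n$ is the form the paper actually uses later, e.g.\ in the proof of Lemma \ref{lemma:samegtype}. With the $(K+1)n$ form, item (4) is also immediate --- a word of length $(K+1)n$ has $Kn+1\leq (K+1)n$ factors of length $n$ --- so the ``small extra argument'' you anticipate there is unnecessary.
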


>From this theorem and the definition of $\sigma_u$ we deduce the following corollary.

\begin{coro}
For all primitive substitutions $\sigma$, prolongable on $a$ such that $\sigma^\infty (a)$ is non-periodic, and all $u$ such that $u$ is a prefix of $\sigma (u)$ we have

$$
| \sigma_u | \leq |\sigma |K_\sigma^2.
$$

\end{coro}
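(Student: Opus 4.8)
The plan is to unwind the defining relation $\Theta_{x,u}\sigma_u = \sigma\Theta_{x,u}$ of the return substitution one letter at a time, and to control the number of return words that appear using the linear recurrence of $x=\sigma^\infty(a)$. For such a prefix $u$ the return substitution $\sigma_u$ (on the alphabet $R_{x,u}$) is well defined by Proposition \ref{prop:derivedsub}. Fix a letter $i$ of $R_{x,u}$ and let $w=\Theta_{x,u}(i)\in\mathcal{R}_{x,u}$ be the corresponding return word to $u$. Applying $\Theta_{x,u}$ to $\sigma_u(i)$ and using the defining relation gives $\sigma(w)=\Theta_{x,u}(\sigma_u(i))$; since $\Theta_{x,u}$ is the coding on $R_{x,u}^*$ that replaces each letter by its associated return word (Proposition \ref{prop:derder}(1)), the word $\sigma(w)$ is the concatenation of exactly $|\sigma_u(i)|$ return words to $u$.

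First I would bound each of these return words from below. By Proposition \ref{prop:sublinrec}, $x$ is linearly recurrent for the constant $K_\sigma$, and by hypothesis $x$ is non-periodic, so Theorem \ref{theo:encad}(\ref{theo:item:1}) applies: every return word $v$ to $u$ satisfies $|v|\geq |u|/K_\sigma$. Summing over the $|\sigma_u(i)|$ return words in the decomposition of $\sigma(w)$ yields
$$
|\sigma(w)|\ \geq\ |\sigma_u(i)|\,\frac{|u|}{K_\sigma}.
$$

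Next I would bound $|\sigma(w)|$ from above: trivially $|\sigma(w)|\leq|\sigma|\,|w|$, and applying Theorem \ref{theo:encad}(\ref{theo:item:1}) once more (the upper estimate this time) gives $|w|\leq K_\sigma|u|$, as $w$ is a return word to $u$. Combining the three inequalities,
$$
|\sigma_u(i)|\ \leq\ \frac{K_\sigma}{|u|}\,|\sigma(w)|\ \leq\ \frac{K_\sigma|\sigma|}{|u|}\,|w|\ \leq\ K_\sigma^2\,|\sigma|,
$$
and taking the maximum over all letters $i$ of $R_{x,u}$ gives $|\sigma_u|\leq|\sigma|K_\sigma^2$.

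I do not anticipate a real obstacle; the argument is a direct combination of results already established. The only point needing care is the bookkeeping in the first step: one must be sure that $\Theta_{x,u}$ sends a word of length $n$ over $R_{x,u}$ to a concatenation of \emph{exactly} $n$ return words, so that $|\sigma(w)|$ is a sum of precisely $|\sigma_u(i)|$ return-word lengths — but this is immediate from the definition of $\Theta_{x,u}$ as the coding on $R_{x,u}^*$, together with the well-definedness of $\sigma_u$ from Proposition \ref{prop:derivedsub}.
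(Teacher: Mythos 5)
Your proof is correct and is exactly the deduction the paper intends: the text states the corollary follows "from this theorem and the definition of $\sigma_u$" and gives no further argument, and your chain — decomposing $\sigma(\Theta_{x,u}(i))=\Theta_{x,u}(\sigma_u(i))$ into $|\sigma_u(i)|$ return words each of length at least $|u|/K_\sigma$, then bounding $|\sigma(w)|\leq|\sigma|\,|w|\leq|\sigma|K_\sigma|u|$ via Theorem \ref{theo:encad}(\ref{theo:item:1}) and Proposition \ref{prop:sublinrec} — is that deduction. No gaps.
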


\subsection{Preimages of codings giving uniformly recurrent morphic sequences}

In this section $\phi$ is a coding and $\sigma :A^*\to A^*$ is a substitution prolongable on $a$ such that all its letters have a growth type. From Proposition \ref{prop:croissance} there is no restriction to make such an assumption on the existence of the growth types.
In addition we suppose the growth rate of $\sigma $ is $\alpha$ and that $x=\phi (y)$ is uniformly recurrent and non-ultimately periodic where $y=\sigma^\infty (a)$.
We also suppose that all letters of $A$ occur in $y$.
Observe that $\sigma$ being a substitution (and thus being growing), $\alpha $ is strictly greater than $1$.
>From Corollary \ref{coro:Rsigma}, $x$ is linearly recurrent for some computable constant $K$.

The following two lemmas were proven in \cite{Durand:2011}.

\begin{lemma}
[Lemma 11 in \cite{Durand:2011}]
\label{lemma:concatenationrw}
Let $t$ be a non-periodic linearly recurrent sequence (with constant $L$).
Then, for all $u\in \mathcal{L}(t)$ and all $l\in \mathbb{N}$, 

$$
\# \cup_{0\leq n \leq l} \mathcal{R}_{t,u}^* \cap \mathcal{L}_n(t) \leq \left( 1+ 4L^3 \right)^{\frac{lL}{|u|}} .
$$
\end{lemma}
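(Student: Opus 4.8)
The plan is to bound the number of concatenations of return words to $u$ that have total length at most $l$. First I would recall, via item (\ref{theo:item:1}) of Theorem \ref{theo:encad}, that every return word $v\in\mathcal{R}_{t,u}$ satisfies $|v|\geq |u|/L$; hence a concatenation $w_1w_2\cdots w_m$ of return words with $|w_1\cdots w_m|\leq l$ uses at most $m\leq lL/|u|$ factors. Then, by item (\ref{theo:item:2}) of the same theorem, $\#\mathcal{R}_{t,u}\leq 4L^3$, so each of these at most $lL/|u|$ slots can be filled in at most $4L^3$ ways. Counting also the empty concatenation (or absorbing it into the ``$1+$'' term), the total number of such concatenations — and a fortiori the number of distinct words of $\mathcal{L}(t)$ they produce — is at most $\sum_{m=0}^{\lfloor lL/|u|\rfloor}(4L^3)^m$, which is crudely bounded by $(1+4L^3)^{lL/|u|}$.

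More precisely, I would argue as follows. Fix $n\leq l$ and let $w\in\mathcal{R}_{t,u}^*\cap\mathcal{L}_n(t)$. Since $\mathcal{R}_{t,u}$ is a code (Proposition \ref{prop:derder}(1), which applies because $t$ is uniformly recurrent and $u$ is a factor we may take as a prefix of an appropriate shift — or simply because $\mathcal{R}_{t,u}$ is prefix/suffix-free in the return-word sense), the decomposition $w=w_1\cdots w_m$ into return words is unique, so the map $w\mapsto (w_1,\dots,w_m)$ is injective. Each $w_i$ has length at least $\langle\mathcal{R}_{t,u}\rangle\geq |u|/L$ by Theorem \ref{theo:encad}(\ref{theo:item:1}), forcing $m\leq n/(|u|/L)=nL/|u|\leq lL/|u|$. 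Therefore
\begin{equation*}
\#\bigcup_{0\leq n\leq l}\mathcal{R}_{t,u}^*\cap\mathcal{L}_n(t)
\leq \sum_{m=0}^{\lfloor lL/|u|\rfloor}\big(\#\mathcal{R}_{t,u}\big)^m
\leq \sum_{m=0}^{\lfloor lL/|u|\rfloor}\big(4L^3\big)^m
\leq \big(1+4L^3\big)^{lL/|u|},
\end{equation*}
where the last step uses $\sum_{m=0}^{N}q^m\leq (1+q)^N$ for $q\geq 1$ and $N\geq 0$.

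The only genuinely delicate point is ensuring that the code property and the length/cardinality estimates of Theorem \ref{theo:encad} are available for the word $u$ in the generality stated — $u$ is an arbitrary factor, not necessarily a prefix. This is handled by passing to a shift of $t$ that begins with an occurrence of $u$ (uniform recurrence makes this harmless, as $\mathcal{R}_{t,u}$ and $\mathcal{L}(t)$ are shift-invariant data), after which Proposition \ref{prop:derder} and Theorem \ref{theo:encad} apply verbatim. Everything else is the elementary counting above, so I expect no real obstacle beyond bookkeeping; the bound is deliberately wasteful (the ``$1+$'' and the exponent $lL/|u|$ rather than $\lfloor lL/|u|\rfloor$) precisely so that no finer argument is needed.
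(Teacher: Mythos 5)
Your proof is correct and follows the same elementary counting route as the source this lemma is quoted from (the present paper states it as Lemma 11 of \cite{Durand:2011} and gives no proof of its own): bound the number of return-word factors by $lL/|u|$ via Theorem \ref{theo:encad}(\ref{theo:item:1}), the number of choices per factor by $4L^3$ via Theorem \ref{theo:encad}(\ref{theo:item:2}), and sum the geometric series. Your only superfluous worry is the final paragraph: neither the code property nor the passage to a shifted sequence is needed for an upper bound, since Theorem \ref{theo:encad} is already stated for arbitrary $u\in\mathcal{L}(x)$ and it suffices that every element of $\mathcal{R}_{t,u}^*\cap\mathcal{L}_n(t)$ admit at least one factorization into return words.
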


\begin{lemma}
[Lemma 15 in \cite{Durand:2011}]
\label{lemma:samegrate}
All letters of $A$ have the same growth rate (w.r.t $\sigma$).
\end{lemma}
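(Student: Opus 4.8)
The plan is to argue by contradiction. Suppose the letters of $A$ do not all share the same growth rate, and set $S=\{b\in A\mid \theta(b)<\alpha\}$, which is then non-empty. First I would record the elementary structural facts: if $b\in S$ and a letter $c$ occurs in $\sigma(b)$ then $|\sigma^{n+1}(b)|\ge|\sigma^{n}(c)|$, so $\theta(c)\le\theta(b)<\alpha$ and $\sigma(S)\subseteq S^{*}$; and since every letter of $A$ — in particular a letter of maximal growth type — occurs in $y$, one gets $\theta(a)=\alpha$, hence $a\notin S$ and $S\subsetneq A$. Note also that, all letters occurring in $y$, $\mathcal{L}(\sigma)=\mathcal{L}(y)$ and $\mathcal{L}(x)=\phi(\mathcal{L}(\sigma))$; moreover, choosing $c\in S$ (which occurs in $y$) the words $\sigma^{m}(c)$ are factors of $y$ lying entirely over $S$ and of length tending to infinity, so $y$ itself is not uniformly recurrent — the gaps between consecutive occurrences of $a$ blow up around these slow blocks — the whole point being that $\phi$ nonetheless restores uniform recurrence, which I will exploit through minimality.

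The core of the approach is to turn the presence of $S$ into a ``slow'' presentation of the language of $x$. Using the block structure of Proposition \ref{prop:decomprim} applied inside $S$ (licit since $\sigma(S)\subseteq S^{*}$), pick a primitive component $C\subseteq S$; its spectral radius is at most $\max_{s\in S}\theta(s)<\alpha$. Replacing $\sigma$ by a computable power $\tau$ (Proposition \ref{prop:decomprim}) we may assume $\tau_{/C}$ is primitive and prolongable on some $c_{0}\in C$; put $z=\tau_{/C}^{\infty}(c_{0})$, a purely primitive substitutive sequence. Since $\mathcal{L}(z)=\mathcal{L}(\tau_{/C})\subseteq\mathcal{L}(\sigma)=\mathcal{L}(y)$, the sequence $\phi(z)$ has all its factors in $\mathcal{L}(x)$, hence lies in the subshift generated by $x$; that subshift is minimal because $x$ is uniformly recurrent, and $\phi(z)$ is itself uniformly recurrent (a coding maps uniformly recurrent sequences to uniformly recurrent ones), so the minimal subshift it generates is contained in, hence equal to, the one of $x$. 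Therefore $\mathcal{L}(x)=\mathcal{L}(\phi(z))=\phi(\mathcal{L}(\tau_{/C}))$: the language of $x$ is presented by the coding $\phi$ together with a primitive substitution $\tau_{/C}$ whose growth rate $\lambda:=\rho(M_{\tau_{/C}})$ is strictly smaller than $\alpha^{p}$, while it is at the same time presented through $x=\phi(\sigma^{\infty}(a))$ with $\theta(a)=\alpha$; running the same minimal-subshift argument on a maximal (rate-$\alpha$) primitive component of $\sigma$ shows moreover that $\mathcal{L}(x)$ has a primitive substitutive presentation of growth rate $\alpha^{p}$ as well.

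It remains to derive a contradiction from the coexistence of these two presentations, and this is the step I expect to be the main obstacle; the idea is that the two growth rates must describe the \emph{same} scaling of $x$. On one hand, since $\phi(z)$ is primitive substitutive, Proposition \ref{prop:sublinrec} and Theorem \ref{theo:encad} make $x$ linearly recurrent with an explicit constant $K$ built from $\tau_{/C}$, so by Theorem \ref{theo:encad}(\ref{theo:item:1}) every return word of $x$ to a prefix $u$ has length at most $K|u|$, and Lemma \ref{lemma:concatenationrw} bounds the number of factors of $x$ of length $n$ that split over $\mathcal{R}_{x,u}$ by $(1+4K^{3})^{nK/|u|}$. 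On the other hand, the fixed-point relation $\sigma(\sigma^{m}(a))=\sigma^{m+1}(a)$ pins down the return words of $y$ — and, $x$ being uniformly recurrent so that $\mathcal{L}(x)$ determines these data, the return words of $x$ — to the prefixes $u_{m}=\phi(\sigma^{m}(a))$ of length $\sim c\,m^{D}\alpha^{m}$: their number stays bounded while their lengths grow like $\alpha^{m}$, so the scaling factor read off this presentation is $\alpha$, not $\lambda^{1/p}$. Matching the two computations of the lengths of return words to long prefixes forces $\lambda^{1/p}=\alpha$, contradicting $\lambda<\alpha^{p}$. Carrying this matching out rigorously requires recognizability of $\sigma$ on the fast part of $\mathcal{L}(\sigma)$, so as to identify occurrences of $\sigma^{m}(a)$ in $y$ with occurrences of $a$ read ``at level $m$''; this is the delicate point. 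Once it is available, the numerical inconsistency closes the proof and shows $S=\emptyset$, i.e.\ all letters of $A$ have growth rate $\alpha$.
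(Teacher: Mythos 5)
Your first half is sound, and it is in fact the paper's own route to linear recurrence: extracting a $\sigma$-closed slow primitive component $C$, iterating to get a primitive substitutive sequence $\phi(z)$ with $\mathcal{L}(\phi(z))=\mathcal{L}(x)$ by minimality, is exactly the argument behind Corollary \ref{coro:Rsigma}. The gap is the final step, which you yourself flag as the obstacle: the coexistence of a rate-$\lambda^{1/p}$ primitive presentation and the rate-$\alpha$ presentation $x=\phi(\sigma^\infty(a))$ is not a numerical inconsistency. Linear recurrence only ties return-word lengths to $|u|$ (they all lie in $[|u|/K,\,K|u|]$ for every $u$), so no absolute ``scaling factor'' can be read off a presentation; the same minimal language is routinely generated by primitive substitutions with different dominant eigenvalues (already $\zeta$ and $\zeta^{2}$), so there is no clash to exploit, and ``matching the two computations'' cannot force $\lambda^{1/p}=\alpha$. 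Moreover the second presentation you invoke need not exist: a maximal-rate primitive component of $\sigma$ is in general not closed under $\sigma$ (only the minimal components in the order underlying Proposition \ref{prop:decomprim} are), so you cannot iterate $\sigma$ inside it to produce a rate-$\alpha^{p}$ primitive substitutive sequence with language $\mathcal{L}(x)$; and the ``recognizability of $\sigma$ on the fast part'' you would need is neither defined nor available for a non-primitive $\sigma$.

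The actual argument (Lemma 15 of \cite{Durand:2011}; the paper displays the method in full when proving the stronger Lemma \ref{lemma:samegtype}) uses the slow letters \emph{locally} rather than through a global change of presentation. One finds a factor $bwc$ recurring infinitely often in $y$ with $w$ of strictly smaller growth rate and $b,c$ of maximal rate, pumps it to obtain words $\sigma^{k}(b_n)U(n,k)\sigma^{k}(c_n)$ recurring in $y$ whose middles $U(n,k)$ have strictly increasing but still ``slow'' lengths, chooses $u$ a prefix of $\phi(\sigma^{k}(c''))$ of length comparable to $|\sigma^{k}(c'')|\asymp\alpha^{k}$, and observes that the suitably padded middles $uG_s\phi(U(m_i,k))$ are pairwise distinct concatenations of return words to $u$ of length $O(|u|)$; for $k$ large enough their number exceeds the uniform bound of Lemma \ref{lemma:concatenationrw}. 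Your reduction to linear recurrence is the right first move, but the contradiction has to come from this counting of return-word concatenations, not from comparing the growth rates of two presentations of $\mathcal{L}(x)$.
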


Reading carefully the proof of this lemma in \cite{Durand:2011} we can slightly improve it to obtain the following lemma.

\begin{lemma}
\label{lemma:samegtype}
All letters of $A$ have the same growth type $(0,\alpha)$ (w.r.t $\sigma$).
\end{lemma}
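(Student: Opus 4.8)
The plan is to upgrade Lemma \ref{lemma:samegrate} (all letters have the same growth rate $\alpha$) to the statement that every letter has growth type exactly $(0,\alpha)$, i.e.\ that no letter has polynomial growth degree $d(a)\geq 1$. The starting point is that, by Lemma \ref{lemma:samegrate}, we already know $\theta(a)=\alpha$ for all $a\in A$; what remains is to rule out the existence of a letter with $d(a)\geq 1$. Suppose for contradiction that $D:=\max\{d(a)\mid a\in A\}\geq 1$, so that $(D,\alpha)$ is the growth type of $\sigma$ and $A_{max}=\{a\in A\mid d(a)=D,\ \theta(a)=\alpha\}$ is non-empty. The idea is to exploit the gap between the growth rate of $|\sigma^n(a)|$ for a letter of maximal growth type, which is of order $n^{D}\alpha^{n}$, and that for a generic letter, to contradict linear recurrence (equivalently, the linear bound $p_x(n)\leq(K+1)n$ of Theorem \ref{theo:encad}(4)) of $x=\phi(y)$.

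First I would fix $n$ large and consider the prefix $w_n=\sigma^n(a)$ of $y$ and its image $\phi(w_n)$, a prefix of $x$ of length $|w_n|$ which is of order $n^{D}\alpha^n$. Next I would localize a letter $b\in A_{max}$: by Proposition \ref{prop:croissance} (last part) applied with $d'=D$, such a $b$ appears in some $\sigma^{k}(a)$, hence $b$ appears in $y$, and since $y$ is the image-reduct situation (all letters of $A$ occur in $y$) we may assume $b$ occurs in $y$. By uniform recurrence of $x$ and linear recurrence, occurrences of $\phi(b\cdots)$-patterns, and more usefully occurrences of $b$ itself inside $y$, are syndetic with a gap controlled by $K$ and $|\sigma|$; concretely, every factor of $y$ of length $C$ (for a computable $C$ depending on $\sigma$, since $\mathcal{L}(y)$ is the language of a primitive-component situation) contains an occurrence of $b$. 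Applying $\sigma^{n-m}$ for a suitable fixed $m$ to such a factor, one sees that $\sigma^{n}(a)$ contains order $|\sigma^{n-m}(\text{that factor})|/|\sigma^{n-m}(b)|\cdot(\text{const})$-many disjoint occurrences of $\sigma^{m}(b)$, and hence $w_n$ — and so the prefix $\phi(w_n)$ of $x$ — is covered, up to bounded overlaps, by translates of the single block $\phi(\sigma^{m}(b))$. The key quantitative point is that a letter $b$ of maximal growth type reproduces itself: $\sigma(b)$ must contain a letter of growth type $(D,\alpha)$ as well, so the "long" parts of $\sigma^{n}(a)$ all descend from copies of such letters, and the number of distinct factors of $x$ of length $\ell$ occurring inside $\phi(w_n)$ is at most polynomial in $\ell$ times the number of return-type blocks — which is bounded — contradicting nothing by itself.

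The cleaner route, which I would actually carry out, is via Lemma \ref{lemma:concatenationrw} combined with Lemma \ref{lemma:maxmin}. Pick a non-empty prefix $u$ of $x$. Since $x$ is linearly recurrent with constant $K$, Lemma \ref{lemma:concatenationrw} gives
$$
\#\Big(\bigcup_{0\leq n\leq \ell}\mathcal{R}_{x,u}^{*}\cap\mathcal{L}_{n}(x)\Big)\leq\big(1+4K^{3}\big)^{\frac{\ell K}{|u|}},
$$
so the number of prefixes of $x$ of length $\leq \ell$ obtained by concatenating return words to $u$ grows only singly-exponentially in $\ell/|u|$. On the other hand, choosing $u$ to be a prefix of $y$'s image that is itself a prefix equal to $\phi$ applied to $\sigma^{j}(a)$ for some $j$ (so that $u$ is a prefix of $x$ and a $\sigma$-image in a controlled way), the prefixes $\phi(\sigma^{n}(a))$ for $n\geq j$ are all concatenations of return words to $u$ (Proposition \ref{prop:derder}(2)), and there are infinitely many of them with lengths $|\sigma^{n}(a)|$. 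If some letter had $d(a)\geq 1$, then by Lemma \ref{lemma:maxmin} applied to the primitive components (and the formula $|\sigma^{k}(a)|=|\sigma_1^{k}(a)|+\sum_i|\sigma_{/B}^{k-1}(c_i)|$ from its proof, iterated), the ratios $|\sigma^{n+1}(a)|/|\sigma^{n}(a)|$ would tend to $\alpha$ but with $|\sigma^{n}(a)|\sim c\,n^{D}\alpha^{n}$, whereas the number of length-$|\sigma^{n}(a)|$ prefixes built from return words to $u$ would be forced to grow at least like some fixed power of $n$ (because consecutive blocks $\sigma^{n}(a)$ and $\sigma^{n+1}(a)$ differ by a factor whose internal structure, having a maximal-growth letter, itself contains super-linearly many new factors), contradicting the single-exponential-in-$\ell/|u|$ bound once we let $u$ grow along prefixes and use that a single $u$ works for all $\ell$.

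The main obstacle I anticipate is making the last contradiction genuinely quantitative rather than hand-wavy: one must pin down, uniformly in $n$, a lower bound on the complexity contribution of the "polynomial tail" coming from letters of degree $D$, and reconcile it with the fact that $x$ is only the coding $\phi$ of $y$ (so distinct factors of $y$ may collapse under $\phi$). I would handle the collapsing issue by passing to the induced/derived sequence on a suitable prefix $u$ of $x$, where Proposition \ref{prop:derder} and Corollary \ref{coro:Dn} let us transport everything back to honest concatenations of return words, and handle the quantitative tail by isolating, via Proposition \ref{prop:decomprim}, the triangular block structure of $M_\sigma$: a degree-$D$ letter forces a chain of $D$ primitive components all with spectral radius $\alpha$ sitting above each other, and iterating $M_\sigma$ through that chain produces the $n^{D}$ factor together with genuinely new factors at each level. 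Spelling this out is routine linear algebra on the block-triangular matrix of Proposition \ref{prop:decomprim}, combined with Lemma \ref{lemma:matrix} to compare the relevant primitive blocks, so the contradiction with Lemma \ref{lemma:concatenationrw} closes and we conclude $D=0$, i.e.\ every letter has growth type $(0,\alpha)$.
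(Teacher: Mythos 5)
Your overall strategy --- derive a contradiction with the linear recurrence of $x$ via the counting bound of Lemma \ref{lemma:concatenationrw} --- is the same one the paper uses, and you correctly identify that collapsing under the coding $\phi$ is the obstacle to arguing on $y$ alone. But the quantitative core of your ``cleaner route'' does not work. You propose to count the prefixes $\phi(\sigma^{n}(a))$ as concatenations of return words to a fixed $u$ and to claim their number ``grows at least like some fixed power of $n$''. A sequence has exactly one prefix of each length, so counting prefixes can never produce more than $\ell+1$ words of length at most $\ell$; and even granting that you mean to count distinct non-prefix factors, for fixed $u$ and $\ell=|\sigma^{n}(a)|\sim c\,n^{D}\alpha^{n}$ the bound of Lemma \ref{lemma:concatenationrw} is $(1+4K^{3})^{\ell K/|u|}$, which is doubly exponential in $n$, so a polynomial-in-$n$ lower bound on the count contradicts nothing. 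To get a contradiction from that lemma you must produce, for a single word $u$ and a single scale $\ell$ with the ratio $\ell/|u|$ bounded by an absolute constant, strictly more than $(1+4K^{3})^{\ell K/|u|}$ distinct concatenations of return words to $u$ of length at most $\ell$. Your proposal never arranges for $\ell/|u|$ to stay bounded while the count blows up; the closing sentence even asks simultaneously to ``let $u$ grow'' and to ``use that a single $u$ works for all $\ell$''.

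The missing idea is structural, not ``routine linear algebra'' on the block-triangular matrix of Proposition \ref{prop:decomprim}. The paper shows that if some letter has degree $d\geq 1$, then (using the last part of Proposition \ref{prop:croissance} to find a word $bwc$ with $w$ of growth type $(0,\alpha)$ flanked by letters of degree at least $1$, and the self-reproduction of maximal-degree letters under $\sigma$) one can build words $\sigma^{k}(b_{n})\,U(n,k)\,\sigma^{k}(c_{n})$ occurring infinitely often in $y$, where $b_{n},c_{n}$ have growth type $(f,\alpha)$ with $f\geq 1$, so $|\sigma^{k}(b_{n})|\asymp k^{f}\alpha^{k}$, while the middle corridor $U(n,k)$ is assembled from pieces of strictly smaller growth type. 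For a suitably huge $k$ this yields very many indices $i$ with $|U(m_{i},k)|\leq|\sigma^{k}(c'')|$. Taking $u$ a prefix of $\phi(\sigma^{k}(c''))$ of length about $|\sigma^{k}(c'')|/(L+1)$, the words $uG_{s}\phi(U(m_{i},k))$ are pairwise distinct concatenations of return words to $u$, all of length at most $(L+3)|\sigma^{k}(c'')|$, hence at most a constant multiple of $|u|$; their number then exceeds the bound $(1+L)^{2(L+3)(L+1)L}$ coming from Lemma \ref{lemma:concatenationrw}. It is precisely this construction of arbitrarily long low-growth corridors trapped between two high-growth letters --- the place where the hypothesis $d\geq 1$ actually enters --- that is absent from your argument, and without it the contradiction cannot be closed.
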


\begin{proof}
>From Lemma \ref{lemma:samegrate} we know that all letters have the same growth rate.
We suppose that  the growth type of $\sigma $ is $(d,\alpha )$ with $d\geq 1$.

There exists a letter whose growth type is $(0,\alpha)$ (Proposition \ref{prop:croissance}).
Hence, there necessarily exists a non-empty word $bwc$, appearing infinitely many times in $y$, where $w$ has growth type $(0, \alpha )$, and, $b$ and $c$ are letters with $d(b)\geq 1$ and $d (c) \geq 1$.
 
Once we observe that for all letters $\gamma$ with growth type $(f,\alpha )$, with $f\geq 1$, there exist letters $\gamma'$ and $\gamma''$ having growth type $(f ,\alpha )$, such that $\sigma (\gamma ) = u'\gamma'u=v\gamma''v'$ where $u$ and $v$ (being possibly the empty word) have growth types strictly less than $(f,\alpha )$, then,
by a recurrence starting with $bwc$, it is easy to prove that there exist two sequences of letters $(b_n)_n$ and $(c_n)_n$, and, two sequences of words $(u_n)_n$ and $(v_n)_n$ such that for all $n\in \mathbb{N}$:

\begin{enumerate}
\item
$b_n u_n \sigma (u_{n-1}) \cdots \sigma^{n-1} (u_1) \sigma^{n} (w) \sigma^{n-1} (v_1) \cdots \sigma (v_{n-1}) v_n c_n $
appears infinitely many times in $x$;
\item
$b_n$ has growth type $(f , \alpha)$;
\item
$c_n$ has growth type $(f , \alpha)$;
\item
$u_n$ has growth type $(d_n,\alpha)$ with $d_n < f$, $|u_n|< \max_{c'\in A} |\sigma (c')|$;
\item
$v_n$ has growth type $(e_n,\alpha)$ with $e_n < f$, $|v_n|< \max_{c'\in A} |\sigma (c')|$.
\end{enumerate}

Consequently, for all $n$ and $k$ the word $W(n,k) = \sigma^k (b_n)U(n,k)\sigma^k (c_n)$, where

$$
\begin{array}{l}
U(n,k) = \sigma^k(u_n) \cdots \sigma^{k+n-1} (u_1) \sigma^{k+n} (w) \sigma^{k+n-1} (v_1) \cdots  \sigma^k (v_n)  ,
\end{array}
$$

appears infinitely many times in $x$. 
There exist a strictly increasing sequence $(m_i)$ and two letters $b''$ and $c''$ such that 

\begin{equation}
\label{proof:aprime}
b_{m_i} = b'' \ \ {\rm and} \ \  c_{m_i} = c'' \ \ \hbox{\rm for all} \ \ i.
\end{equation}

>From Theorem \ref{theo:encad} and Lemma  \ref{lemma:concatenationrw} there exists some constant $L$ such that:
\begin{enumerate}
\item[(P1)]
$x$ is linearly recurrent for the constant $L$;
\item[(P2)]
for all $u\in \mathcal{L} (x)$  and $w\in \mathcal{R}_{x,u}\cap \mathcal{L} (x)$ we have 
$ \frac{|u|}{L} \leq |w|\leq L|u|$;
\item [(P3)]
for all $u\in \mathcal{L} (x)$ we have $\# \mathcal{R}_{x,u}\cap \mathcal{L} (x)\leq L$;
\item [(P4)]
for all $u\in \mathcal{L} (x)$ and all $l\in \mathbb{N}$
$$
\# \cup_{0\leq n \leq l} \left( \mathcal{R}_{x,u}^* \cap \mathcal{L}_n(x)\right) \leq \left( 1+ L \right)^{\frac{lL}{|u|}} .
$$
\item [(P5)]
for all $n$ and $k$
\begin{align*}
\label{proof:aprimeprime}
|U(n,k)|   & \leq L\left( \sum_{i=0}^n (k+i)^{f-1} \alpha^{k+i} \right) ,\\
\frac{1}{L} k^{f} \alpha^{k}  & \leq |\sigma^k (b'')| \leq L k^{f}  \alpha^{k} \hbox{ and }\\
\frac{1}{L} k^{f}\alpha^{k}   & \leq |\sigma^k (c'')| \leq L k^{f}  \alpha^{k} .
\end{align*}
\end{enumerate}

>From \eqref{proof:aprime} and the previous inequalities, there exists $k$ such that $|\sigma^k (c'')|\leq |\sigma^k (b'')|$ (the other case can be treated in the same way), $2(1+L)\leq |\sigma^k (c'')|$ and 

\begin{align}
|U(m_i,k)| \leq |\sigma^k (c'')| \hbox{ for all } 1\leq i\leq \left( 1+ L \right)^{2(L+3)(L+1)L} +1 .
\end{align}

>From (P1) and (P2), for all $j$, all words in $\mathcal{L}_j (x)$ appear in all words of $\mathcal{L} _{(L+1)j} (x)$.
Let $u$ be a prefix of $\phi (\sigma^k (c''))$ such that  

$$
\frac{|\sigma^k (c'')|}{L+1} -1 \leq |u| \leq  \frac{|\sigma^k (b'')|}{L+1} .
$$

Then $u$ is non-empty and occurs in $\phi (\sigma^k (b''))$.
We can decompose $\phi (\sigma^k (b''))$ and $\phi (\sigma^k (c''))$ in such a way that $\phi (\sigma^k (b'')) = G_puG_s$ and $\phi (\sigma^k (c'')) = uH_s$ with

$$
|G_s|\leq (L+1)|u| .
$$

Observe that $u$, $b''$, $c''$ and $G_s$ do not depend on $i$.
Moreover, for all $i$ belonging to $[1 , \left( 1+ L \right)^{2(L+3)(L+1)L} +1]$, the word $uG_s \phi (U(m_i,k))$ belongs to $\mathcal{L} (x)$, is a concatenation of return words to $u$ and satisfies

$$
|uG_s \phi (U(m_i,k))| \leq (L+3)|\sigma^k (c'')|  
. $$

Moreover, from (P4) we get

\begin{align}
\label{ineg:rwcontradict}
\# \bigcup_{n=0}^{(L+3)|\sigma^k (c'')|} \mathcal{R}_{x,u}^* \cap \mathcal{L}_n(x) 
& \leq 
\left( 1+ L \right)^{\frac{(L+3)|\sigma^k (c'')|L}{|u|}}
 \leq 
\left( 1+ L \right)^{2(L+3)(L+1)L} .
\end{align}

But observe that $(|U(m_i,k)|)_{0\leq i \leq 1 + ( 1+ L )^{2(L+3)(L+1)L}}$ being strictly increasing, the words 
$uG_s \phi (U(m_i,k)) $, $0\leq i \leq 1 + ( 1+ L )^{2(L+3)(L+1)L}$ are all distinct (and belong to $\cup_{0\leq n \leq (L+3)|\sigma^k (c'')|  } \mathcal{R}_{x,u}^* \cap \mathcal{L}_n(x)$). 
This is in contradiction with \eqref{ineg:rwcontradict}.
\end{proof}

We can now conclude with the number of preimages of $\phi$.

\begin{prop}
\label{prop:boundedpreimages}
For all $u\in \mathcal{L} (x)$, we have: $\# \phi^{-1} (\{ u\})\cap \mathcal{L} (y) \leq p_\sigma (K+1) |\sigma |Q_\sigma (K+1)^2$. 
\end{prop}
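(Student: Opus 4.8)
The plan is to bound the number of distinct preimages $v\in\mathcal L(y)$ of a given word $u\in\mathcal L(x)$ by controlling, on the one hand, the possible "shapes'' of such a preimage relative to the $\sigma$-structure of $y$, and, on the other hand, the lengths involved via the growth estimates from Lemma \ref{lemma:maxmin} and the linear recurrence constant $K$ coming from Corollary \ref{coro:Rsigma}. First I would fix $u\in\mathcal L(x)$ and set $n=|u|$. Since $x$ is linearly recurrent for $K$ and non-periodic, Theorem \ref{theo:encad}(4) gives $p_x(n)\le (K+1)n$, but more to the point I want a bound independent of $n$, so I will instead look at a short prefix-type window: by Theorem \ref{theo:encad}(1) every word of $\mathcal L_n(x)$ occurs in every word of $\mathcal L_{K(n+1)}(x)$, and by Lemma \ref{lemma:samegtype} all letters of $A$ have growth type $(0,\alpha)$, so Lemma \ref{lemma:maxmin} applies to $\sigma$ itself.

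The key step is the following. Let $v\in\mathcal L(y)$ with $\phi(v)=u$, so in particular $|v|=n$. Choose $k$ minimal so that $\langle\sigma^k\rangle > n$; then $v$ is a factor of $\sigma^k(b)$ for a single letter $b\in A$ occurring in $y$ — indeed $v$ occurs in $\sigma^k(y)=y$ and, being shorter than every $|\sigma^k(c)|$, it cannot straddle three consecutive blocks $\sigma^k(y_{i-1})\sigma^k(y_i)\sigma^k(y_{i+1})$ in a way that... more carefully, $v$ is a factor of $\sigma^k(cd)$ for a single pair of consecutive letters $cd\in\mathcal L_2(y)$, hence of $\sigma^{k+1}(e)$ for a single letter $e$ (any letter having $cd$ as a factor of its $\sigma$-image, which exists since $cd\in\mathcal L(\sigma)$). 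Now every letter of $A$ occurs in $y$, and $\sigma^{k+1}(e)$ has length at most $|\sigma^{k+1}|\le P_\sigma\alpha^{k+1}$, while by minimality of $k$ we have $\alpha^{k-1}\le P_\sigma n$ roughly, i.e. $|\sigma^{k+1}|\le C\cdot n$ for a computable constant $C$. So every preimage $v$ of $u$ is a factor of $\sigma^{k+1}(e)$ for some letter $e$, and $|\sigma^{k+1}(e)|\le C n$. The number of such factors of length $n$ is at most $(\#A)\cdot|\sigma^{k+1}|\le (\#A) P_\sigma\alpha^{k+1}$; converting via $Q_\sigma$ and the bounds already recorded, and replacing the crude $\#A$ and $\alpha$-powers by $p_\sigma(K+1)$, $|\sigma|$, $Q_\sigma$ and $(K+1)^2$ in the way the statement prescribes, yields the asserted bound $\#\phi^{-1}(\{u\})\cap\mathcal L(y)\le p_\sigma(K+1)|\sigma|Q_\sigma(K+1)^2$.

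More precisely, the bookkeeping I expect the author intends is: a preimage $v$ sits inside $\sigma(cd)$ for a single $cd\in\mathcal L_2(y)$ once $n$ is small compared to $\langle\sigma\rangle$ — but in general one must first pass to a power $\sigma^k$ with $\langle\sigma^k\rangle$ comparable to $n$; the choice of $k$ is governed by $|\sigma^k|\approx Q_\sigma\langle\sigma^k\rangle$ (equation \eqref{eq:constantmaxmin}) and $n$, the number of candidate "carrier'' words of length $2$ in $\mathcal L(\sigma)$ is $p_\sigma(2)\le p_\sigma(K+1)$ after using $n\le K(n+1)$-type slack, each such word generates $\sigma^k$-images of length at most $|\sigma|\langle\sigma^{k-1}\rangle\cdot$(something), and inside each the number of length-$n$ windows is at most $|\sigma|Q_\sigma(K+1)^2$. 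The main obstacle is precisely this last length accounting: one must argue that a preimage of $u$ cannot be "spread out'' over many $\sigma^k$-blocks, which uses that $|v|=|u|=n$ and that $\langle\sigma^k\rangle$ grows like $\alpha^k$ up to the computable factor $P_\sigma$, so that a single well-chosen $k$ simultaneously forces $v$ into one block and keeps that block's length linear in $n$ with a fully computable constant. Everything else — counting factors of a bounded-length word, and identifying the computable constant with the stated product $p_\sigma(K+1)|\sigma|Q_\sigma(K+1)^2$ — is routine given Lemmas \ref{lemma:maxmin} and the linear-recurrence facts from Theorem \ref{theo:encad}.
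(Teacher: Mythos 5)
Your overall strategy --- locate every preimage $v$ of $u$ inside a block $\sigma^{k}(w)$ whose length is $O(|u|)$ with a computable constant, then count --- is the same as the paper's. But the final counting step has a genuine gap. You bound the number of preimages by the number of length-$|u|$ factors (or ``windows'') of a carrier word of length at most $C|u|$. That quantity is of order $(C-1)|u|$, i.e.\ it grows linearly with $|u|$, whereas the statement asserts a bound that is \emph{independent} of $u$. Nothing in your accounting prevents a word of length $C|u|$ from containing on the order of $|u|$ distinct factors of length $|u|$ all mapping to the same $u$, so the claim that ``the number of length-$n$ windows is at most $|\sigma|Q_\sigma(K+1)^2$'' is unjustified as written.

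The missing ingredient is item \eqref{theo:item:1} of Theorem \ref{theo:encad}: since $x$ is non-periodic and linearly recurrent for $K$, every return word to $u$ in $x$ has length at least $|u|/K$, so two distinct occurrences of $u$ in $x$ are at least $|u|/K$ apart. Because $\phi$ is a coding, an occurrence of any $v\in\phi^{-1}(\{u\})\cap\mathcal L(y)$ in $y$ projects to an occurrence of $u$ in $x$ at the same position; hence occurrences of elements of $\phi^{-1}(\{u\})$ in $y$ are also separated by at least $|u|/K$. A carrier block of length at most $(K+1)|\sigma^{n+1}|\le (K+1)|\sigma|Q_\sigma|u|$ therefore contains at most $(K+1)|\sigma|Q_\sigma|u|/(|u|/K)=(K+1)K|\sigma|Q_\sigma$ such occurrences --- a constant --- and multiplying by the $p_\sigma(K+1)$ choices of carrier word gives the stated bound. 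This separation argument is exactly what the paper uses (dividing $|\sigma^{n+1}(a_1\cdots a_{K+1})|$ by $|u|/K$), and it is the step your proposal replaces by a factor count that does not close. A secondary, fixable point: $cd\in\mathcal L(\sigma)$ only guarantees that $cd$ appears in some $\sigma^m(e)$, not in $\sigma(e)$ for a single letter $e$; the paper avoids this by working directly with words of $\mathcal L_{K+1}(\sigma)$, of which there are $p_\sigma(K+1)$.
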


\begin{proof}
Let $u\in \mathcal{L} (x)$ and $n$ be such that $\langle \sigma^n \rangle \leq |u| \leq \langle \sigma^{n+1}\rangle$.
>From Theorem \ref{theo:encad},
all $v\in \phi^{-1} (\{ u\})\cap \mathcal{L} (y)$ occur in some $\sigma^{n+1} (a_1 \cdots a_{K +1})$ and in such a word occurs at most 
$| \sigma^{n+1} (a_1 \cdots a_{K+1})|/(|u|/K )$ occurrences of elements belonging to $\phi^{-1} (\{ u\})$. 
Hence, using Lemma \ref{lemma:samegtype} and Lemma \ref{lemma:maxmin}, one gets

$$
|\phi^{-1} (\{ u\})\cap \mathcal{L} (y)| 
\leq p_\sigma (K+1) \frac{(K+1)|\sigma^{n+1}|}{|u|/ K } 
\leq p_\sigma (K+1) |\sigma |Q_\sigma (K+1)^2 .
$$
\end{proof}

\subsection{Derived sequences of uniformly recurrent substitutive sequences}
\label{subsec:derivedsequnifrec}



We keep the assumptions on $x$, $y$, $\sigma$ and $\phi$.
Taking a power of $\sigma$ if needed, we can suppose $\langle\sigma \rangle \geq (K+1)^2$. 

Let $u$ be a non-empty prefix of $y$, $v=\phi (u)$ and $U=\phi^{-1} (\{ v \})$.
>From Theorem \ref{theo:encad},  for each word $w$ such that $|w|\geq (|u|+1)/K$, the word $\sigma (w)$ has an occurrence of an element of $U$.
Thus, for such $w$ we set 

$$
\sigma (w) = p(w) m(w) s(w) 
$$

where $m(w)$ belongs to $U$ and $p(w)m(w)$ has a unique occurrence of an element of $U$.
Observe that $p(u)$ is the empty word and $m(u)=u$.
Now take $(w,u') \in \tilde{\mathcal{R}}_{y,U}$. 
Then 

$$
\sigma (wu') = p(w) m(w) s(w) p(u') m(u') s(u') 
$$

and there exists a unique finite sequence $(w_1,u_1), \dots , (w_k, u_k)$ in $\tilde{\mathcal{R}}_{y,U}$ such that

\begin{enumerate}
\item
$w_1 \cdots w_k = m(w) s(w) p(u')$ and 
\item
$w_1 w_2 \cdots w_i u_i$ is a prefix of $m(w) s(w) p(u') m(u')$ for all $i$.
\end{enumerate}

We set 

$$
\tilde{\sigma}_U (w,u') = (w_1,u_1) \dots  (w_k, u_k) .
$$

This defines an endomorphism of $\tilde{\mathcal{R}}_{y,U}^*$.
Then, we define $\sigma_U$ to be the unique endomorphism of $\tilde{R}_{y,U}^*$
satisfying 

\begin{align}
\label{def:sigmaU}
\tilde{\sigma}_U \tilde{\Theta}_{y,U} = \tilde{\Theta}_{y,U} \sigma_U .
\end{align}

Observe that the morphism $\tilde{\Theta}_{y,U}$ being a bijection from $\tilde{R}_{y,U}$ onto $\tilde{\mathcal{R}}_{y,U}$, we get $|\sigma_U (i)| = |\tilde{\sigma}_U (w,u')|$ 
for all $i\in \tilde{R}_{y,U}$ and $(w,u') \in \tilde{\mathcal{R}}_{y,U}$ such that $\tilde{\Theta}_{y,U} (i) = (w,u')$.

We recall that when $y$ is a primitive substitutive sequence and $U$ is a singleton $\{ u \}$ then $\sigma_U = \sigma_u$ and it is the unique endomorphism satisfying $\sigma \Theta_{y,u} = \Theta_{y,u} \sigma_u $ (Proposition \ref{prop:derivedsub}).
Unfortunately $\sigma_U$ does not satisfy such an equality (involving $\sigma$) in general. 
But we are not far from such a situation. 
This is explained by the following lemma which can be deduced from the above definitions.

\begin{lemma}
\label{lemma:defsigmaU}
Let $u$ be a non-empty prefix of $y$ and $U$ be the set $\phi^{-1} (\{ \phi (u) \})$.
Then, for all letters $(w_1,u_1), \dots , (w_k, u_k)$ in $\tilde{\mathcal{R}}_{y,U}$ such that $w_1 w_2 \cdots w_k u_k$ belongs to $\mathcal{L} (y)$ and $w_1 w_2 \cdots w_i u_i$ is a prefix of $w_1 w_2 \cdots w_k u_k$, for all $i\leq k$, we have

$$
p(w_1)
\delta_U \left( 
\tilde{\sigma}_U (w_1 , u_1) \cdots \tilde{\sigma}_U (w_k , u_k) 
\right)
m(w_k)s(w_k) 
=
\sigma (w_1\dots w_ku_k) .
$$

Moreover, for the unique element $(w,u)\in \tilde{R}_{y,U}$ such that $wu$ is a prefix of $y$, $\tilde{\sigma}_U^\infty (w,u)$ is well-defined, 

$$
\tilde{\sigma}_U^\infty (w,u) = \Delta_U (y) \hbox{ and } \delta_U (\tilde{\sigma}_U^\infty (w,u) ) = y.
$$
\end{lemma}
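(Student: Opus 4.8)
The plan is to unwind the definitions of $\tilde\sigma_U$, $\delta_U$, $p$, $m$, $s$, and $\sigma_U$ and proceed by an induction on $k$. First I would record the one-step identity: for a single letter $(w_1,u_1)\in\tilde{\mathcal{R}}_{y,U}$, the construction of $\tilde\sigma_U$ gives a factorization $w_1'\cdots w_r' = m(w_1)s(w_1)p(u_1)$ of a portion of $\sigma(w_1u_1)$, and by the definition of $\delta_U$ (which just forgets the second coordinate) we have $\delta_U(\tilde\sigma_U(w_1,u_1)) = w_1'\cdots w_r' = m(w_1)s(w_1)p(u_1)$. Since $\sigma(w_1) = p(w_1)m(w_1)s(w_1)$, this yields $p(w_1)\,\delta_U(\tilde\sigma_U(w_1,u_1))\,m(u_1) = p(w_1)m(w_1)s(w_1)p(u_1)m(u_1)$, and one checks $p(u_1)m(u_1)$ is a prefix of $\sigma(u_1) = p(u_1)m(u_1)s(u_1)$, so appending $s(u_1)$ recovers $\sigma(w_1u_1)$. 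This is the base case $k=1$ (with $w_k=u_1$ playing a slightly different role — I would be careful to state the $k=1$ instance as $p(w_1)\delta_U(\tilde\sigma_U(w_1,u_1))m(w_1)s(w_1) = \sigma(w_1u_1)$ using that $u_1$ here is $w_k u_k$ with $k=1$; actually the cleanest formulation is to absorb $u_k$ via the relation $m(u_k)s(u_k)$ versus $m(w_k)s(w_k)$, noting $w_k$ and $u_k$ share the prefix structure demanded by membership in $\tilde{\mathcal{R}}_{y,U}$).

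For the inductive step, suppose the identity holds for $k-1$ letters. The key observation is the ``telescoping'' of the suffix/prefix words: when we pass from $w_1\cdots w_{k-1}u_{k-1}$ to $w_1\cdots w_k u_k$, the extra material is governed by how $\sigma(u_{k-1})$ overlaps $\sigma(w_k)$, and the defining property (2) of the sequence $(w_1,u_1),\dots,(w_k,u_k)$ — namely that $w_1\cdots w_i u_i$ is a prefix of $w_1\cdots w_k u_k$ for each $i$ — forces the boundary terms $m(w_i)s(w_i)$ and $p(w_{i+1})$ to cancel consecutively. Concretely, I would write $\sigma(w_1\cdots w_k u_k) = \sigma(w_1\cdots w_{k-1}) \cdot \sigma(w_k u_k)$, apply the one-step identity to the last factor and the induction hypothesis (in the form applying to $w_1\cdots w_{k-1}$, for which one needs the auxiliary observation that $\delta_U\tilde\sigma_U$ applied to an initial block computes $\sigma$ of that block up to the same kind of boundary correction), and verify that the internal correction words match up. This is purely a bookkeeping of concatenations; no combinatorial content beyond the definitions.

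Finally, for the last assertion about $(w,u)\in\tilde R_{y,U}$ with $wu$ a prefix of $y$: since $y$ has a prefix in $U$ (one may arrange this, or invoke the hypotheses carried over for $x,y$ — here $m(u)=u$ and $p(u)=\epsilon$, so $u\in U$ and $u$ is a prefix of $y$), Proposition~\ref{prop:def-DU} applies and $\Delta_U(y)=(w_n,u_n)_n$ is well-defined. I would show $\tilde\sigma_U$ is prolongable on $(w,u)$ by checking that $(w,u)$ is the first letter of $\tilde\sigma_U(w,u)$ — this follows because $\sigma(wu)$ begins with $\sigma(w)$ which begins with $wu$ (as $w$ begins with the $\sigma$-image start and $u$ is the prolongation prefix), so the first element of the factorization of $m(w)s(w)p(u)$ is again $(w,u)$; combined with $|\tilde\sigma_U^n(w,u)|\to\infty$ (inherited from $|\sigma^n|\to\infty$ and the length comparisons in Theorem~\ref{theo:encad}\eqref{theo:item:1}), $\tilde\sigma_U^\infty(w,u)$ exists. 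Passing the main identity to the limit gives that $\delta_U(\tilde\sigma_U^\infty(w,u))$ is a fixed point of $\sigma$ with the right prefix, hence equals $y$, and by uniqueness in Proposition~\ref{prop:def-DU}(1) the sequence $\tilde\sigma_U^\infty(w,u)$ equals $\Delta_U(y)$.

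The main obstacle I anticipate is not any single hard step but the careful alignment of the boundary words $p(\cdot)$, $m(\cdot)$, $s(\cdot)$ across consecutive return words in the induction — making sure that when two return words are concatenated the trailing $s(w_i)p(w_{i+1})$-type data telescopes correctly and that the asymmetry between $w_k$ and $u_k$ in the statement is handled consistently. The length hypothesis $\langle\sigma\rangle\ge(K+1)^2$ and Theorem~\ref{theo:encad} are what guarantee every relevant $\sigma(w)$ actually contains an occurrence of an element of $U$, so $p,m,s$ are genuinely defined on all the words that arise; I would flag explicitly where that is used.
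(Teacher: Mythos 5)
The paper offers no written proof of this lemma (it is declared to be ``deduced from the above definitions''), so your strategy --- unwind $\tilde\sigma_U$, $\delta_U$ and the decomposition $\sigma(\cdot)=p(\cdot)m(\cdot)s(\cdot)$, verify the one-letter case, telescope, and then get the final assertion from prolongability of $\tilde\sigma_U$ on $(w,u)$ together with the uniqueness in Proposition~\ref{prop:def-DU} --- is the intended (indeed the only) route, and your base case and your treatment of the ``moreover'' part are correct.

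Two points need to be made precise. First, the entire telescoping rests on one identity that you gesture at but never state: for $1\leq i<k$ one has $p(u_i)=p(w_{i+1})$ and $m(u_i)=m(w_{i+1})$. This holds because the prefix condition forces $u_i$ and $w_{i+1}$ to begin at the same position of $w_1\cdots w_ku_k$, so one of $\sigma(u_i)$, $\sigma(w_{i+1})$ is a prefix of the other, and both are long enough (this is where $\langle\sigma\rangle\geq(K+1)^2$ and Theorem~\ref{theo:encad} enter) that each image contains an occurrence of an element of $U$; hence the first such occurrence, and the word preceding it, are the same in both images. Once this is isolated, $p(w_1)\prod_{i=1}^{k}\bigl(m(w_i)s(w_i)p(u_i)\bigr)m(u_k)s(u_k)$ regroups directly into $\sigma(w_1)\cdots\sigma(w_k)\sigma(u_k)$ and no induction is needed. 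Second, your parenthetical justification of the right-hand boundary term --- that ``$w_k$ and $u_k$ share the prefix structure'' --- is wrong and should be removed: $w_k$ starts where $u_{k-1}$ starts, not where $u_k$ starts, and $m(w_k)s(w_k)\neq m(u_k)s(u_k)$ in general (they do not even have the same length when $|w_k|>|u|$ and $\sigma$ is growing). What your computation actually establishes, correctly, is the identity with $m(u_k)s(u_k)$ as the trailing factor; the term $m(w_k)s(w_k)$ printed in the statement is a typo that your proof silently corrects, not something your argument can or should derive.
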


\begin{prop}
\label{prop:sigmaUfinite}
For all prefixes $u$ of $y$, with $U=\phi^{-1} (\{ \phi (u) \})$, we get:

\begin{enumerate}
\item
\label{enum:1}
The endomorphism $\sigma_U$ is a prolongable substitution on the letter $1$;
\item
\label{enum:2}
$\sigma_U^\infty (1) = \tilde{\mathcal{D}}_U (y)$;
\item
\label{enum:3}
$\phi \delta_U \tilde{\Theta}_{y,U} \sigma_U^\infty (1) = x$;
\item
\label{enum:4}
There exists a unique coding $\psi_u : \tilde{R}_{y,U}^* \to \tilde{R}_{x,u}^*$ satisfying

$$
\phi \delta_U \tilde{\Theta}_{y,U} = \Theta_u \psi_u ;
$$
\item
\label{enum:5}
$\psi_u (\tilde{\mathcal{D}}_U (y)) = \mathcal{D}_u (x)$.
\end{enumerate}
\end{prop}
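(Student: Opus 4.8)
The plan is to read off all five assertions from the construction preceding the statement, from Lemma \ref{lemma:defsigmaU}, and from one extra ingredient: $\phi$ induces a tight correspondence between the return words of $y$ to $U$ and those of $x$ to $v:=\phi(u)$ (a non-empty prefix of $x$; throughout, $\Theta_u$, $\tilde R_{x,u}$ and $\mathcal D_u(x)$ denote the return-word objects of $x$ relative to $v$). Since $\phi$ is a coding it is length-preserving and $U=\phi^{-1}(\{v\})$ is exactly the set of factors of $y$ of length $|u|$ with $\phi$-image $v$; hence for $i<j$, $y_{[i,j-1]}$ is a return word of $y$ to $U$, with subsequent $U$-occurrence $y_{[j,j+|u|-1]}$, if and only if $x_{[i,j-1]}=\phi(y_{[i,j-1]})$ is a return word of $x$ to $v$. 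By Theorem \ref{theo:encad}(1) and (\ref{theo:item:1}) applied to $x$ (linearly recurrent for $K$), return words of $y$ to $U$ have length in $[|u|/K,K|u|]$, $U$ occurs in every factor of $y$ of length $K(|u|+1)$, and $w\mapsto\phi(w)$ maps $\mathcal R_{y,U}$ onto $\mathcal R_{x,v}$; moreover $\delta_U$ sends a letter $(w,u')\in\tilde{\mathcal R}_{y,U}$ to $w$, whose $\phi$-image is a return word of $x$ to $v$. After replacing $\sigma$ by a large enough power (which does not change $y$, $x$, $K$, $U$, and for which the constant $P_\sigma$ of Lemma \ref{lemma:maxmin} may be taken unchanged), we may moreover assume $\langle\sigma\rangle$ and $\alpha$ are large enough that $\sigma(w)$ contains an occurrence of $U$ for every return word $w$ of $y$ to $U$ and for every $w\in U$; then all the words $p(\cdot)$ occurring in the construction of $\tilde\sigma_U$ satisfy $|p(\cdot)|\le K(|u|+1)\le 2K|u|$, a bound \emph{not involving} $|\sigma|$, which is the crux.

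For \eqref{enum:1}: $u$ is a prefix of $y$ and $u\in U$, so the first $U$-occurrence of $y$ is at $0$; writing $(w_0,u_0)=\tilde\Theta_{y,U}(1)$ (so $w_0u_0$ is a prefix of $y$), the prefix $\sigma(w_0)$ of $y$ has its first $U$-occurrence at $0$, hence $p(w_0)=\epsilon$ and $m(w_0)=u$. Parsing $m(w_0)s(w_0)p(u_0)=\sigma(w_0)p(u_0)$, a prefix of $y$, into return words and using the uniqueness in Proposition \ref{prop:def-DU}(1), the first letter of $\tilde\sigma_U(w_0,u_0)$ is again $(w_0,u_0)$, i.e. $\sigma_U(1)$ starts with $1$. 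For growth: fix $(w,u')\in\tilde{\mathcal R}_{y,U}$ and set $a_n=|\delta_U(\tilde\sigma_U^n(w,u'))|$. Iterating the identity of Lemma \ref{lemma:defsigmaU} gives $\pi_n\,\delta_U(\tilde\sigma_U^n(w,u'))=\sigma^n(w)\,\rho_n$, where $\pi_n$ is a concatenation of words of the form $\sigma^j(p(\cdot))$ with $0\le j<n$, so $|\pi_n|\le 2K|u|\sum_{j<n}|\sigma^j|\le \tfrac{2K|u|P_\sigma}{\alpha-1}\alpha^n$ by Lemma \ref{lemma:maxmin}, while $|\sigma^n(w)|\ge\langle\sigma^n\rangle|w|\ge\tfrac1{P_\sigma}\alpha^n\tfrac{|u|}{K}$. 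Hence $a_n\ge|\sigma^n(w)|-|\pi_n|\ge|u|\alpha^n\bigl(\tfrac1{P_\sigma K}-\tfrac{2KP_\sigma}{\alpha-1}\bigr)$, which is $\ge c\,\alpha^n$ with $c>0$ as soon as $\alpha>2K^2P_\sigma^2+1$ — a condition independent of $u$, arranged by the above power of $\sigma$. Since return words of $y$ to $U$ have length $\le K|u|$, $|\sigma_U^n(w,u')|\ge a_n/(K|u|)\to\infty$; thus $\sigma_U$ is growing, and with the first part it is a substitution prolongable on $1$, proving \eqref{enum:1}. Then \eqref{enum:2} and \eqref{enum:3} are immediate: by \eqref{def:sigmaU} and Lemma \ref{lemma:defsigmaU}, $\tilde\Theta_{y,U}(\sigma_U^\infty(1))=\tilde\sigma_U^\infty(w_0,u_0)=\Delta_U(y)$, so $\sigma_U^\infty(1)=\tilde\Theta_{y,U}^{-1}(\Delta_U(y))=\tilde{\mathcal D}_U(y)$ by Proposition \ref{prop:def-DU}(2) (using that $\tilde\Theta_{y,U}$ is a bijective letter-to-letter coding), and $\phi\delta_U\tilde\Theta_{y,U}(\sigma_U^\infty(1))=\phi(\delta_U(\Delta_U(y)))=\phi(y)=x$ since $\delta_U(\Delta_U(y))=y$.

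For \eqref{enum:4} and \eqref{enum:5}: define the morphism $\psi_u$ on a letter $i$ by $\psi_u(i):=\Theta_u^{-1}(\phi(w))$, where $(w,u')=\tilde\Theta_{y,U}(i)$; this makes sense because $\phi(w)\in\mathcal R_{x,v}$ by the correspondence and $\Theta_u$ is a bijection of $\tilde R_{x,u}$ onto $\mathcal R_{x,v}$. The converse half of the correspondence (every return word of $x$ to $v$ is $\phi(w)$ for some return word $w$ of $y$ to $U$) shows $\psi_u$ is onto $\tilde R_{x,u}$, so $\psi_u$ is a coding; by construction $\phi\delta_U\tilde\Theta_{y,U}(i)=\phi(w)=\Theta_u(\psi_u(i))$, so $\phi\delta_U\tilde\Theta_{y,U}=\Theta_u\psi_u$, and uniqueness follows since $\Theta_u$ is one-to-one on $\tilde R_{x,u}^*$ (Proposition \ref{prop:derder}(1), $x$ being uniformly recurrent). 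Applying this identity to the sequence $\tilde{\mathcal D}_U(y)=\sigma_U^\infty(1)$ and using \eqref{enum:3} gives $\Theta_u(\psi_u(\tilde{\mathcal D}_U(y)))=\phi\delta_U\tilde\Theta_{y,U}(\sigma_U^\infty(1))=x$, whence $\psi_u(\tilde{\mathcal D}_U(y))=\mathcal D_u(x)$ by the uniqueness in Proposition \ref{prop:def-DU}(3), which is \eqref{enum:5}.

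The step I expect to be the real obstacle is the growth assertion inside \eqref{enum:1}: a crude length estimate fails because $|\sigma^n(w)|$ and the boundary correction $\pi_n$ are both of order $\alpha^n$. It is rescued only by the observation that $\pi_n$ is built from the words $p(\cdot)$, whose lengths are $\le 2K|u|$ \emph{regardless of $\sigma$}, precisely because $x$ is linearly recurrent and so $v$ — hence $U$ — recurs in $y$ with gaps bounded in terms of $K$ and $|u|$ only; a further power of $\sigma$ then makes the $\sigma^n(w)$-term win, uniformly over all prefixes $u$. Everything else is bookkeeping with the definitions, Lemma \ref{lemma:defsigmaU}, and the $\phi$-correspondence between return words.
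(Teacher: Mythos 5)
Your proposal is correct, and it follows exactly the route the paper indicates: the paper's own proof is only a pointer to the definition \eqref{def:sigmaU}, Lemma \ref{lemma:defsigmaU} and Proposition \ref{prop:def-DU}, and your arguments for \eqref{enum:2}--\eqref{enum:5}, as well as your reading of $\Theta_u$, $\tilde R_{x,u}$, $\mathcal D_u(x)$ as the return-word objects of $x$ relative to $\phi(u)$, are precisely the intended bookkeeping. The one place where you genuinely add content is the growth of $\sigma_U$ in \eqref{enum:1}, which the paper passes over in silence; you are right that this is the only non-trivial point, and your identification of the uniform bound $|p(\cdot)|\le K(|u|+1)$ (independent of $|\sigma|$) as the crux is correct. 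Note, however, that your comparison of $|\sigma^n(w)|$ with $|\pi_n|$ is wasteful and forces you to pass to a further power of $\sigma$ (to get $\alpha>2K^2P_\sigma^2+1$), so that strictly you prove the statement under a normalization stronger than the one fixed at the start of Section \ref{subsec:derivedsequnifrec}; this is harmless there (the paper renormalizes by powers twice already, and nothing downstream is sensitive to it), but it is also avoidable. Indeed, setting $a_n=|\delta_U(\tilde\sigma_U^n(w,u'))|$, the one-step identity $\delta_U(\tilde\sigma_U^{n+1}(w,u'))=p(\cdot)^{-1}\,\sigma\bigl(\delta_U(\tilde\sigma_U^{n}(w,u'))\bigr)\,p(\cdot)$ gives directly
\begin{equation*}
a_{n+1}\;\ge\;\langle\sigma\rangle\,a_n-2K|u|\;\ge\;\bigl(\langle\sigma\rangle-2K\bigr)a_n\;\ge\;(K^2+1)\,a_n ,
\end{equation*}
using $a_n\ge |m(\cdot)|=|u|$ and the normalization $\langle\sigma\rangle\ge(K+1)^2$ already in force; this yields the growth of every letter of $\sigma_U$ with no extra power and no appeal to Lemma \ref{lemma:maxmin}. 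Everything else in your write-up checks out.
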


\begin{proof}
Statement \eqref{enum:1} follows from Lemma \ref{lemma:defsigmaU} and the definition of $\sigma_U$ in \eqref{def:sigmaU}.
Statement \eqref{enum:3} is due to Lemma \ref{lemma:defsigmaU} and Proposition \ref{prop:def-DU}.
Statement \eqref{enum:2} also needs \eqref{def:sigmaU} to be proven.
Statement \eqref{enum:4} is easy to establish and Statement \eqref{enum:5} follows from \eqref{enum:2}, \eqref{enum:3}, \eqref{enum:4} and Proposition \ref{prop:def-DU}. 
\end{proof}

The following theorem extends the main result in \cite{Durand:1998} and proves Theorem \ref{theo:maindurand1998} (see Statement \eqref{enum:der4} below) for uniformly recurrent substitutive sequences, the necessary condition being already proven in \cite{Durand:1998}.

\begin{theo}
\label{theo:derbound}
We get:

\begin{enumerate}
\item
\label{enum:der1}
For all prefixes $u$ of $y$, with $U=\phi^{-1} (\{ \phi (u) \})$, $\#  \tilde{R}_{y,U} \leq K_1 = 4K^3 p_\sigma (K+1) |\sigma |Q_\sigma (K+1)^2$;
\item
\label{enum:der2}
$\# \{ \sigma_U  | U= \phi^{-1} (\{ \phi (u) \}), u \hbox{ is a prefix of } x \} \leq K_1^{K_1 K_2}$, with $K_2 = |\sigma |(K+1)K$;
\item
\label{enum:der3}
$\# \{ \psi_u  | u \hbox{ prefix of } x \}\leq K_1^2$;
\item
\label{enum:der4}
The cardinality of the set of derived sequences $\{ \mathcal{D}_u (x) | u \hbox{ is a prefix of } x \}$ is bounded by $K_1^{K_1K_2+2}$.
\end{enumerate}
\end{theo}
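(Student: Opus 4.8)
The plan is to establish the four bounds in sequence, each feeding the next. For \eqref{enum:der1}, I would argue that $\# \tilde{R}_{y,U}$ counts pairs $(w,u')$ with $w$ a return word to $U$ and $u'\in U$, $wu'\in\mathcal{L}(y)$. Applying $\phi$, each such $w$ maps onto a return word to $v=\phi(u)$ in $x$, of which there are at most $4K^3$ by Theorem \ref{theo:encad}\eqref{theo:item:2} (recall $x$ is linearly recurrent for $K$). Each such return word in $x$ pulls back, under $\phi$, to at most $\#\phi^{-1}(\{\cdot\})\cap\mathcal{L}(y)$ words, and the second coordinate $u'$ ranges over $U=\phi^{-1}(\{v\})$; by Proposition \ref{prop:boundedpreimages} both preimage counts are at most $p_\sigma(K+1)|\sigma|Q_\sigma(K+1)^2$. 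Being careful about which factor of $K_1$ absorbs which count, one gets $\#\tilde{R}_{y,U}\leq 4K^3\cdot p_\sigma(K+1)|\sigma|Q_\sigma(K+1)^2 = K_1$; the honest bookkeeping is to note $w$ together with the position of the unique occurrence of an element of $U$ determines $(w,u')$, so one preimage factor suffices.

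For \eqref{enum:der2}, the key point is that $\sigma_U$ is an endomorphism of $\tilde{R}_{y,U}^*$, an alphabet of size at most $K_1$, so it is determined by the $\leq K_1$ words $\sigma_U(i)$; each such word has the same length as $\tilde{\sigma}_U(w,u')$ for the corresponding $(w,u')$. I would bound $|\tilde{\sigma}_U(w,u')|$ by counting, via Lemma \ref{lemma:defsigmaU}, how many elements of $\tilde{\mathcal{R}}_{y,U}$ fit inside $\sigma(wu')$: the word $\sigma(wu')$ has length at most $|\sigma|\cdot|wu'|$, and by Theorem \ref{theo:encad}\eqref{theo:item:1} every return word to $v$ (hence the $w_i$-images) has length at least $|v|/K = |\phi(u)|/K = |u|/K$, while $|w|\le K|u|$ and $|u'|=|u|$ gives $|wu'|\le (K+1)|u|$; so $k\le |\sigma|(K+1)|u|/(|u|/K) = |\sigma|(K+1)K = K_2$. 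Thus each of the $\le K_1$ images is a word of length $\le K_2$ over an alphabet of size $\le K_1$, giving at most $(K_1^{K_2})^{K_1}=K_1^{K_1K_2}$ endomorphisms. (One must also observe that the endomorphism $\sigma_U$ up to renaming is what matters, since the indexing $\tilde{R}_{y,U}=\{1,2,\dots\}$ is canonical once the enumeration order is fixed.)

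For \eqref{enum:der3}, $\psi_u : \tilde{R}_{y,U}^* \to \tilde{R}_{x,u}^*$ is a coding (Proposition \ref{prop:sigmaUfinite}\eqref{enum:4}), hence determined by the images of the $\le K_1$ letters of $\tilde{R}_{y,U}$, each image being a single letter of $\tilde{R}_{x,u}$; and $\#\tilde{R}_{x,u} = \#\mathcal{R}_{x,u}\le 4K^3 \le K_1$ by Theorem \ref{theo:encad}\eqref{theo:item:2}. So there are at most $K_1^{K_1}\le K_1^2$ such codings — wait, this needs $K_1\le 2$ which is false; rather the correct reading is that $\psi_u$ is a coding of an alphabet of size $\le K_1$ onto one of size $\le K_1$, giving $\le K_1^{K_1}$ maps, so the intended bound should be stated accordingly; I would follow the paper and write the estimate as $K_1^{K_1}$ or note the discrepancy. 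Finally \eqref{enum:der4}: by Proposition \ref{prop:sigmaUfinite}, $\mathcal{D}_u(x) = \psi_u(\tilde{\mathcal{D}}_U(y)) = \psi_u(\sigma_U^\infty(1))$, so the derived sequence is determined by the pair $(\sigma_U, \psi_u)$; combining \eqref{enum:der2} and \eqref{enum:der3} the number of such pairs is at most $K_1^{K_1K_2}\cdot K_1^{2} = K_1^{K_1K_2+2}$, as claimed.

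The main obstacle is \eqref{enum:der1}: getting the combinatorics of $\tilde{\mathcal{R}}_{y,U}$ right without double-counting the preimage factors, and in particular verifying that passing through $\phi$ and pulling back is tight enough that only \emph{one} factor of $p_\sigma(K+1)|\sigma|Q_\sigma(K+1)^2$ is lost rather than two. Everything after that is a matter of carefully propagating ``alphabet of size $\le K_1$, words of length $\le K_2$'' through the definitions, with Lemma \ref{lemma:defsigmaU} and Theorem \ref{theo:encad} doing the real work of controlling lengths.
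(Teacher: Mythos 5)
Your proof follows the paper's argument essentially step for step: item (1) via Proposition \ref{prop:boundedpreimages} combined with the $4K^3$ bound of Theorem \ref{theo:encad} (one preimage factor for the word $wu'$, exactly as you note), item (2) via the length bound $|\sigma_U(i)|\leq |\sigma(wu')|/(|u|/K)\leq K_2$ on an alphabet of size at most $K_1$, and items (3)--(4) by counting codings and then pairs $(\sigma_U,\psi_u)$. Your hesitation over item (3) is well founded but reflects the paper rather than a flaw in your reasoning: the paper's own proof only observes that $\psi_u$ is a coding with $\psi_u(\tilde{R}_{y,U})=\tilde{R}_{x,u}$ and concludes ``using (1)'', which naturally gives $K_1^{K_1}$ rather than $K_1^2$ (and likewise its proof of (2) writes $(K_1+1)^{K_1K_2}$ against the statement's $K_1^{K_1K_2}$); since the decidability algorithm only needs the bounds to be computable, these discrepancies in the exponents affect nothing downstream.
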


\begin{proof}
Let $u$ be a non-empty prefix of $y$.
We set $U=\phi^{-1} (\{ \phi ( u ) \})$.
Let us first show that $\tilde{R}_{y,U}$ is bounded independently of $U$.
We have to bound the number of words $wu'$ where $(w,u' )$ belongs to $\tilde{\mathcal{R}}_{y,U}$.
The sequence $x$ being linearly recurrent for the constant $K$,
from Proposition \ref{prop:boundedpreimages} and Proposition \ref{theo:encad}

$$
\#  \tilde{R}_{y,U} \leq K_1 = 4K^3 p_\sigma (K+1) |\sigma |Q_\sigma (K+1)^2 .
$$

Now let us bound the length of $\sigma_U$.
Let $i \in \tilde{R}_{y,U}$ and $(w,u') = \tilde{\Theta}_{y,U} (i)$. Thus, $(w,u')$ belongs to $\tilde{\mathcal{R}}_{y,U}$.
>From the definition of $\sigma_U$ and Lemma \ref{lemma:defsigmaU} we get

$$
|\sigma_U (i)| = |\tilde{\sigma}_U (w,u')| \leq \frac{|\sigma (wu') |}{\frac{|u|}{K}} \leq K_2 = |\sigma | K(K+1) . 
$$

Hence 

$$
\# \{ \sigma_U  | U= \phi^{-1} (\{ \phi (u) \}), u \hbox{ prefix of } x \} \leq (K_1+1)^{K_1 K_2} .
$$

Let us show that $\# \{ \psi_u  | u \hbox{ prefix of } x \}$ is bounded.
The morphism $\psi_u$ is a coding that necessarily verifies $\psi_u (\tilde{R}_{y,U})= \tilde{R}_{x,u}$.
Thus, we obtain \eqref{enum:der3} using \eqref{enum:der1}.

The last statement follows from the two previous statements and Proposition \ref{prop:sigmaUfinite}. 
This achieves the proof.
\end{proof}

\section{Decidability of uniform recurrence of morphic sequences}
\label{section:decid}

In the sequel $\sigma : A^*\to A^*$ is an endomorphism prolongable on $a$, $\phi : A^* \to B^*$ is a morphism, $y=\sigma^\infty  (a)$ and $x=\phi (y)$ is a sequence of  $B^\mathbb{N}$.
We have to find an algorithm telling if $x$ is uniformly recurrent or not.

>From Proposition \ref{prop:croissance} it is decidable to find an integer $p$ such that all letters of $A$ has a growth type w.r.t. $\sigma^p$.
Hence, taking such a power if needed, we suppose $\sigma $ has this property. 
In this way we can freely use Theorem \ref{theo:derbound}.

In \cite{Durand:preprint2011} it is shown that the ultimate periodicity of $x$ is decidable. 
Hence we suppose $x$ is non-periodic.

As explained in Section 3.2 of \cite{Durand:preprint2011} we can always suppose that all letters of $A$ occur in $y$.
When it is not the case, $\sigma$ and $\phi$ can be algorithmically changed to fulfill this assumption. 
Hence we make such an assumption. 
An other reduction of the problem is given by the following result. 

\begin{theo}
\cite{Honkala:2009,Durand:preprint2011}
There exists an algorithm  computing a coding $\varphi$ and a non-erasing endomorphism $\tau$, prolongable on $a'$, such that $x=\varphi (z)$ where $z=\tau^\infty (a')$.
\end{theo}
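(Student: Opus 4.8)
The goal is to replace the pair $(\sigma,\phi)$, where $\sigma$ is an arbitrary endomorphism prolongable on $a$ and $\phi$ a morphism, by a pair $(\tau,\varphi)$ with $\tau$ \emph{non-erasing} and $\varphi$ a \emph{coding}, producing the same sequence $x$. I would carry this out in two stages: first eliminate erasing letters from the generating endomorphism, then absorb what remains of $\phi$ into a coding.

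For the first stage, the point is that the erasing letters of $\sigma$ form a set $E\subseteq A$ that can be computed: a letter $b$ is (eventually) erasing iff $\sigma^{\#A}(b)=\epsilon$, which is checkable. Since we have already arranged that every letter of $A$ occurs in $y=\sigma^\infty(a)$ and that $x$ is non-periodic, the letter $a$ itself is not erasing, and one can define on the sub-alphabet $A'=A\setminus E$ of non-erasing letters a new endomorphism $\tau_0:A'^*\to A'^*$ by $\tau_0(b)=\pi_{A'}(\sigma(b))$, where $\pi_{A'}$ deletes every occurrence of a letter of $E$. One checks that $\tau_0$ is still prolongable on $a$ (because $\sigma(a)=au$ with $a\notin E$, and the prefix relation is preserved after deletion, using that $\sigma^n(a)$ grows to infinity and hence keeps non-erasing letters), and that $\tau_0^\infty(a)=\pi_{A'}(\sigma^\infty(a))=\pi_{A'}(y)$. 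The key identity is that deleting the erasing letters of $y$ commutes appropriately with $\phi$: letters of $E$ contribute nothing to $\phi$ only after we push them through, so more precisely we first replace $\phi$ by $\phi\circ(\text{reinsertion})$; the clean way is to observe $\phi(y)=\phi'(\pi_{A'}(y))$ is generally false, so instead one keeps track of the images. The honest route, which is the one used in the cited references, is: since erasing letters disappear under all sufficiently high powers, $y$ and $\pi_{A'}(y)$ differ only by interspersed blocks that $\sigma$ eventually kills, and one defines $\phi_0:A'^*\to B^*$ by letting $\phi_0(b)$ be $\phi$ applied to the maximal block of $y$ consisting of $b$ followed by the erasing letters immediately after it — but this requires bounded such blocks. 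Bounded-ness of maximal erasing blocks in $y$ is itself a lemma (it follows from the combinatorial structure of $\sigma$, or one argues via Proposition~\ref{prop:decomprim}). I expect \textbf{this bookkeeping — showing the erasing letters appear in boundedly long blocks and defining the corrected morphism — to be the main obstacle}, and it is exactly what is handled in \cite{Honkala:2009} and \cite{Durand:preprint2011}.

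For the second stage, once $\tau$ is non-erasing and prolongable on $a'$ with $x=\phi_1(z)$, $z=\tau^\infty(a')$ and $\phi_1:A'^*\to B^*$ a morphism (not yet a coding), one uses the classical trick of enlarging the alphabet: let $\bar A=\{(b,j)\mid b\in A',\ 0\le j<|\phi_1(b)|\}$, i.e.\ one new letter for each position inside each image word. Define the coding $\varphi:\bar A^*\to B^*$ by $\varphi(b,j)=(\phi_1(b))_j$, and lift $\tau$ to $\bar\tau:\bar A^*\to\bar A^*$ so that $\bar\tau(b,j)$ is the block of $\bar A$-letters corresponding, under the factorization $\tau(b)=b_1\cdots b_r$, to the positions of $\phi_1(\tau(b))$ that lie in the $j$-th position's ``descendants'' — concretely $\bar\tau(b,0)$ encodes $\phi_1(\tau(b))$ entirely and $\bar\tau(b,j)=\epsilon$ for $j\ge 1$ would reintroduce erasing, so instead one distributes: $\bar\tau(b,j)$ is the factor of $\overline{\tau(b)}$ of the appropriate length. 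The standard solution (due to Cobham) is to set $\bar\tau(b,j)$ equal to the sequence of $\bar A$-letters spelling $\phi_1(b_{k})\cdots$ apportioned so that $\varphi\bar\tau=\bar\tau'\varphi$ holds on the nose; this is routine once the apportionment is fixed, and gives $\bar\tau$ non-erasing provided $\phi_1$ is non-erasing, which we may assume since $x$ is infinite and non-periodic. Then $z'=\bar\tau^\infty(a',0)$ satisfies $\varphi(z')=x$, completing the construction. Every step here — the definition of $\bar A$, $\varphi$, $\bar\tau$, and the verification $\varphi(\bar\tau^\infty(a',0))=\phi_1(\tau^\infty(a'))=x$ — is a finite algorithmic manipulation, so the resulting procedure is an algorithm, as required.

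Since the two stages are both effective and both appear explicitly in \cite{Honkala:2009} and \cite{Durand:preprint2011}, the cleanest write-up simply invokes those papers for stage one and records the classical desubstitution argument for stage two; this is presumably why the statement is attributed to them rather than re-proved in full here.
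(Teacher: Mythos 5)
The paper offers no proof of this statement: it is imported wholesale from \cite{Honkala:2009} and \cite{Durand:preprint2011}, so there is no internal argument to compare yours against. Your two-stage plan (first remove erasing letters, then turn the outer morphism into a coding by alphabet enlargement) is indeed the standard route those references take, and your final paragraph correctly locates where the real work lives. But as written the sketch has two genuine gaps. In stage one, your proposed $\phi_0(b) = \phi(b\cdot(\hbox{erasing block following }b))$ is not well defined as a function of the letter $b$: two occurrences of the same non-erasing letter in $y$ can be followed by different erasing blocks (take $\sigma(c)=cbe_1$, $\sigma(d)=dbe_2$ with $e_1,e_2$ erasing). The fix is to enlarge the alphabet to pairs $(b,w)$ where $w$ is the erasing block following that occurrence of $b$; but then you must check that the induced endomorphism on these pairs is itself well defined, and it is not automatically so, because the erasing block after the \emph{last} non-erasing letter of $\sigma(b)$ spills over into the image of the next letter of $y$. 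Handling this (after first replacing $\sigma$ by a power so that every eventually-erasing letter is erasing and every non-erasing letter has a non-erasing letter in its image, which bounds maximal erasing blocks by $2|\sigma|$) is precisely the non-routine content of the cited papers, not mere bookkeeping.

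In stage two, the apportionment of the letters of $\overline{\tau(b)}$ among the positions $(b,0),\dots,(b,|\phi_1(b)|-1)$ requires $|\phi_1(\tau(b))|\geq|\phi_1(b)|$ for every letter $b$, and this can fail: if $\tau(b)=c$ with $|\phi_1(c)|<|\phi_1(b)|$ there are not enough letters to distribute and $\bar\tau$ cannot be made non-erasing. One must first pass to a suitable power of $\tau$ (or treat the non-growing letters separately) so that this length inequality holds for all letters; only then does the classical block-splitting give a non-erasing $\bar\tau$ and a coding $\varphi$ with $\varphi\bar\tau=\bar\tau\varphi$ on the images and $\varphi(\bar\tau^\infty(a',0))=x$. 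Neither gap is fatal to the strategy, but both must be closed for the algorithm to be correct, and both are exactly what \cite{Honkala:2009} and \cite{Durand:preprint2011} supply.
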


Consequently we can consider that $\phi $ is a coding and $\sigma $ is non-erasing.

>From Proposition \ref{prop:decomprim} it is decidable whether $\sigma$ is growing or not.
Moreover we have the following lemma.

\begin{lemma}
\label{lemma:pansiot}
\cite[Th\'eor\`eme 4.1]{Pansiot:1984}
The endomorphism $\sigma$ satisfies exactly one the following two statements.
\begin{enumerate}
\item
\label{lemma:enum:pansiot2}
The length of words (occurring in $\sigma^\infty (a)$) consisting of non-growing letters is bounded.
\item
\label{lemma:enum:pansiot3}
There exists a growing letter $b\in A$, occurring in $\sigma^\infty (a)$, such that $\sigma (b) =  vbu$ (or $ubv$) with $u\in C^*\setminus \{ \epsilon \}$ where $C$ is the set of non-growing letters.
\end{enumerate}
Moreover, in the situation \eqref{lemma:enum:pansiot2} the sequence $\sigma^\infty (a)$ can be algorithmically defined as a  substitutive sequence.
\end{lemma}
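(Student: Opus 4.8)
The plan is first to settle the dichotomy — that the two alternatives are incompatible and that at least one of them always holds — and then to treat the effective/substitutive part. Write $C\subseteq A$ for the set of non-growing letters and $y=\sigma^\infty(a)$; since all letters of $A$ occur in $y$, every word $\sigma^n(b)$ with $b\in A$ lies in $\mathcal{L}(y)$. Two elementary facts will be used throughout. First, $\sigma(C^*)\subseteq C^*$: a growing letter inside some $\sigma(c)$ with $c\in C$ would force $|\sigma^n(c)|\to+\infty$; hence $\sup_n|\sigma^n(c)|<+\infty$ for $c\in C$, and any factor lying in $C^*$ that is contained in a single image $\sigma(b)$ has length at most $|\sigma|$. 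Second, for a growing letter $b$ the word $\sigma(b)$ must contain a growing letter, since otherwise $\sigma(b)\in C^*$ and $|\sigma^n(b)|$ would be bounded.

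I would begin with incompatibility. Suppose \eqref{lemma:enum:pansiot3} holds, say $\sigma(b)=vbu$ with $b$ growing and $u\in C^*\setminus\{\epsilon\}$ (the shape $ubv$ is symmetric). An immediate induction shows that $\sigma^n(b)$ contains a factor $bw_n$ with $w_n\in C^*$ and $|w_n|\ge n$: from $\sigma^n(b)=\cdots bw_n\cdots$ one gets $\sigma^{n+1}(b)\supseteq\sigma(b)\sigma(w_n)=vbu\,\sigma(w_n)$, and $u\,\sigma(w_n)\in C^*$ is strictly longer than $w_n$. Since $\sigma^n(b)\in\mathcal{L}(y)$, the sequence $y$ has arbitrarily long $C^*$-factors, so \eqref{lemma:enum:pansiot2} fails; thus at most one alternative can hold.

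For "at least one holds" I would argue the contrapositive: assuming the $C^*$-factors of $y$ are of unbounded length, produce a letter as in \eqref{lemma:enum:pansiot3}. Replacing $\sigma$ by a suitable computable power if necessary — which is harmless here, since it leaves $y$ and $C$ unchanged — I may assume the maps $\lambda$, $\rho$ sending a growing letter $b$ to the last, resp. first, growing letter of $\sigma(b)$ have only fixed points as periodic points; write $\sigma(b)=\alpha_b\,\lambda(b)\,s(b)=p(b)\,\rho(b)\,\beta_b$ with $s(b),p(b)\in C^*$. Because $\sigma^n(a)$ begins with the growing letter $a$, every maximal $C^*$-factor $P$ of $\sigma^n(a)$ is immediately preceded by a growing letter $b$, and the maximal $C^*$-factor of $\sigma^{n+1}(a)$ descending from that occurrence equals $s(b)\,\sigma(P)\,p(b')$ (or $s(b)\sigma(P)$ at the very end), has length $\ge|P|$, and is preceded by $\lambda(b)$. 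Tracing such a factor backwards, its birth is a $C^*$-factor of a single image, of length $\le|\sigma|$, and unrolling the recursion bounds its length at level $n$ by $|\sigma|$ times the number of non-empty boundary words $s(\cdot),p(\cdot)$ met along its history, plus $|\sigma|\cdot\sup_{c\in C,\,k}|\sigma^k(c)|$ (all $C^*$-images of a fixed word having bounded length). Since these lengths are unbounded, some history meets non-empty boundary words arbitrarily often; recording the left-flanking growing letters and applying the pigeonhole principle in the finite set $A$ yields a growing letter $b$ whose $\lambda$-orbit is the single point $b$ with $s(b)\ne\epsilon$ (or, symmetrically, $\rho(b)=b$ with $p(b)\ne\epsilon$), i.e. $\sigma(b)=vbu$ (resp. $ubv$) with $u\in C^*\setminus\{\epsilon\}$. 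The careful combinatorial bookkeeping behind this — in particular that the recurrence is effective and can be realised already on (a power of) $\sigma$ — is the substance of Th\'eor\`eme 4.1 of \cite{Pansiot:1984}, and it is the step I expect to be the main obstacle; I would refer there for the details.

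Finally, for the substitutive conclusion in case \eqref{lemma:enum:pansiot2} I would proceed as follows; note first that one can decide which case holds by inspecting the finitely many growing letters for the pattern above. Let $N$ be the (computable) bound on the lengths of $C^*$-factors of $y$, and decompose $y$ uniquely into blocks by cutting immediately before each growing letter. Each block is a word $gw$ with $g$ growing, $w\in C^*$ and $|w|\le N$ (the first block being $a$ possibly followed by a short $C^*$-word), so there are finitely many blocks, forming an alphabet $A'$; recording the successive blocks gives a sequence $z$ over $A'$, letting $\sigma'$ be the endomorphism of $A'^*$ obtained by re-blocking $\sigma(gw)$ one has $z=\sigma'^\infty(b_0)$ for $b_0$ the first block, and $\pi\colon A'^*\to A^*$, $gw\mapsto gw$, is a non-erasing morphism with $\pi(z)=y$. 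Since every block contains a growing letter, $\sigma'$ is growing, prolongable and non-erasing. It is then standard that a sequence which is the image, under a non-erasing morphism, of a fixed point of a growing prolongable endomorphism is substitutive: one passes to a power of $\sigma'$ whose images dominate the lengths of the $\pi$-images, tags each letter by its position inside a $\pi$-image (so that $\pi$ becomes a coding), and absorbs the overflow into the last tag, all while keeping the substitution growing. Since $N$, $A'$, $\sigma'$, $\pi$ and the resulting substitution and coding are all computable, this gives the asserted effective statement.
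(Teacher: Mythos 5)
The paper offers no proof of this lemma: it is quoted from Pansiot's Th\'eor\`eme 4.1 and the citation is the entire argument, so there is no in-paper proof to compare against. Your reconstruction follows what is essentially Pansiot's own strategy (stability of $C^*$ under $\sigma$, incompatibility via the pumped suffix $u\,\sigma(u)\cdots\sigma^{n-1}(u)$, block recoding in the bounded case), and the incompatibility half is correct as written, using that $\sigma$ has already been reduced to a non-erasing morphism at this point of the paper. For the existence half you ultimately defer the combinatorial core back to Pansiot, so there your attempt proves no more than the citation does. Two of the steps you do carry out, however, contain genuine gaps.

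First, the reduction ``replacing $\sigma$ by a suitable computable power \dots\ is harmless'' is not harmless for alternative (2) as literally stated, which asserts $\sigma(b)=vbu$ for $\sigma$ itself. Take $A=\{a,b_1,b_2,c\}$ with $\sigma(a)=ab_1$, $\sigma(b_1)=b_2$, $\sigma(b_2)=b_1c$, $\sigma(c)=c$: here $b_1$ and $b_2$ are growing, the $C^*$-factors $c^n$ of the fixed point are unbounded, yet no growing letter $b$ satisfies $\sigma(b)=vbu$ or $ubv$ with $u\in C^*\setminus\{\epsilon\}$; one only has $\sigma^2(b_1)=b_1c$. So your argument yields alternative (2) only for a power of $\sigma$, and you would need either to carry that power explicitly through the statement (which does suffice for the paper's application, namely producing a word $w$ with $w^n\in\mathcal{L}(y)$ for all $n$) or to recover a fixed letter of $\sigma$ itself by analysing the $\lambda$-cycle; neither is done. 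Second, in the bounded case your re-blocking endomorphism $\sigma'$ is not well defined letter by letter: if $\sigma(g)$ begins with a non-growing letter --- which can happen in case (1), e.g. $\sigma(a)=ag$, $\sigma(g)=ch$, $\sigma(h)=hh$, $\sigma(c)=c$, whose fixed point has only isolated occurrences of $c$ --- then the block decomposition of $\sigma(gw)$ starts with an orphaned $C^*$-prefix which, inside $y=\sigma(y)$, is absorbed by the image of the \emph{preceding} block, so ``re-blocking $\sigma(gw)$'' in isolation does not produce a word over $A'$ compatible with the blocking of $y$. The standard repair (pair each block with the growing letter that follows it, or first normalise $\sigma$ so that images of growing letters begin with growing letters) is routine but missing from your construction.
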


Suppose $\sigma $ is not growing.
>From Lemma \ref{lemma:pansiot} we can also suppose $\sigma $ satisfies the Statement \eqref{lemma:enum:pansiot3} of this lemma.
Consequently it is classical to check there exists a word $w$, that can be algorithmically determined, such that $w^n$ belongs to $\mathcal{L} (y)$ for all $n$. 
Thus, to be uniformly recurrent, $x$ should be periodic, more precisely we should have $x=\phi (w)^\infty$. 
This can be verified using the main result in \cite{Durand:preprint2011} or \cite{Mitrofanov:preprint2011a}, or, to use the following lemma.

\begin{lemma}
\label{lemma:finalcheck}
$x=\phi (w)^\infty$ if and only if

\begin{enumerate}
\item
for all words $B = b_1 b_2 \cdots b_{2|w|}\in \mathcal{L} (y)$, where the $b_i$'s are letters, $\phi (B) = s_{B}\phi (w)^{r_{B}}p_{B}$ where $r_B$ is a positive integer,  $s_{B}$ is a suffix of $\phi (w)$ and $p_{B}$ a prefix of $\phi (w)$;
\item
for all $BB' \in \mathcal{L} (y)$, where $B$ and $B'$ are words of length $2|w|$, $p_{B}s_{B'}=\phi (w)$;
\item
when $B=\phi (y_0y_1 \cdots y_{2|w|-1})$, $s_B$ is the empty word.  
\end{enumerate}
\end{lemma}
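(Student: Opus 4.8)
The plan is to argue both implications, the forward one being essentially a bookkeeping exercise and the backward one being where the real content lies. For the forward direction, assume $x=\phi(w)^\infty$. Every factor of $y$ of length $2|w|$ is of the form $B=b_1\cdots b_{2|w|}\in\mathcal{L}(y)$, and $\phi(B)$ is then a factor of $\phi(w)^\infty=x$. A factor of a purely periodic sequence of period $\phi(w)$ decomposes as a suffix of $\phi(w)$, followed by a whole number of copies of $\phi(w)$, followed by a prefix of $\phi(w)$; this gives the decomposition $\phi(B)=s_B\phi(w)^{r_B}p_B$ of item~(1), and since $|\phi(B)|\ge 2|w|$ (each letter contributes at least one letter under the coding $\phi$, so $|\phi(B)|=2|w|$ as $\phi$ is a coding — in fact exactly $2|w|\ge|w|$, forcing $r_B\ge 1$), we get $r_B$ positive. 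For item~(2), if $BB'\in\mathcal{L}(y)$ then $\phi(B)\phi(B')$ is again a factor of $x$, and the way two consecutive such blocks must glue up forces $p_Bs_{B'}$ to be exactly one full period $\phi(w)$. For item~(3), when $B$ is the prefix $\phi(y_0\cdots y_{2|w|-1})$ of $x$, the occurrence of $\phi(B)$ starts at position $0$ of $\phi(w)^\infty$, so the leading suffix $s_B$ is empty.

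For the converse, assume (1)--(3) hold. I would build $x$ block by block along the natural factorization of $y$ into consecutive windows of length $2|w|$: write $y=B^{(0)}B^{(1)}B^{(2)}\cdots$ where each $B^{(m)}$ has length $2|w|$ (so $B^{(m)}=y_{[2m|w|,\,2(m+1)|w|-1]}$), noting that $B^{(m)}B^{(m+1)}$ need not itself be a window of the factorization, but $B^{(m)}$ and $B^{(m+1)}$ are both in $\mathcal{L}(y)$ and their concatenation is a factor of $y$, so item~(2) applies to show $p_{B^{(m)}}s_{B^{(m+1)}}=\phi(w)$. Applying $\phi$ termwise, $x=\phi(B^{(0)})\phi(B^{(1)})\cdots$, and substituting the decompositions from item~(1) gives
$$
x = s_{B^{(0)}}\phi(w)^{r_{B^{(0)}}}p_{B^{(0)}}\, s_{B^{(1)}}\phi(w)^{r_{B^{(1)}}}p_{B^{(1)}}\, s_{B^{(2)}}\cdots .
$$
By item~(3), $s_{B^{(0)}}$ is empty, so $x$ starts with $\phi(w)^{r_{B^{(0)}}}$; and by the gluing identity $p_{B^{(m)}}s_{B^{(m+1)}}=\phi(w)$ for every $m$, each junction $p_{B^{(m)}}s_{B^{(m+1)}}$ contributes exactly one more copy of $\phi(w)$. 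Hence the whole concatenation telescopes to $\phi(w)^\infty$, i.e.\ $x=\phi(w)^\infty$.

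The main obstacle I anticipate is not the telescoping itself but justifying that item~(2) can legitimately be applied at the junctions of the chosen windowing: one must check that for consecutive windows $B^{(m)}, B^{(m+1)}$ the word $B^{(m)}B^{(m+1)}$ is genuinely a factor of $y$ (it is, being $y_{[2m|w|,\,2(m+2)|w|-1]}$), and that the decomposition furnished by item~(1) for the \emph{factor} $B^{(m)}B^{(m+1)}\in\mathcal{L}(y)$ is compatible with the separate decompositions of $B^{(m)}$ and $B^{(m+1)}$ — in other words, that the decomposition of a long factor restricts correctly to the decompositions of its length-$2|w|$ subfactors. This amounts to the uniqueness of the $s,\phi(w)^{r},p$ decomposition, which holds because $w$ (hence $\phi(w)$) is chosen primitive-looking enough that $\phi(w)$ is not a proper power in the relevant range, or can be forced by the length constraint $|s_B|<|w|$, $|p_B|<|w|$; one should remark that the decomposition in item~(1) is unique under the implicit convention $|s_B|,|p_B|<|\phi(w)|$, and this uniqueness is what makes the gluing consistent. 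Once that consistency point is pinned down, everything else is routine concatenation bookkeeping, and the statement also makes clear that each of the three conditions is visibly decidable since $\mathcal{L}(y)\cap A^{4|w|}$ is a finite, computable set.
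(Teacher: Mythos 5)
The paper offers no argument to compare against (its proof is ``left to the reader''), so I will assess your proposal on its own terms. Your converse direction is correct and is the heart of the matter: window $y$ into blocks $B^{(m)}=y_{[2m|w|,2(m+1)|w|-1]}$, apply (1) to each block, (3) at the start, (2) at each junction, and telescope. The ``main obstacle'' you worry about at the end is, however, spurious: item (1) only assigns decompositions to words of length exactly $2|w|$, so no decomposition of the length-$4|w|$ word $B^{(m)}B^{(m+1)}$ is ever invoked, and no compatibility between a long factor and its halves is needed --- condition (2) directly relates $p_{B^{(m)}}$ to $s_{B^{(m+1)}}$ and that is all the telescoping uses.

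The genuine gap is in the forward direction, and it is precisely where you appeal to the (false) uniqueness of the decomposition under the convention $|s_B|,|p_B|<|\phi(w)|$. Under that convention, a block $B$ occurring at a position $i\equiv 0 \pmod{|w|}$ is forced to have $s_B=p_B=\epsilon$ and $r_B=2$, and then for $B^{(0)}B^{(1)}$ (which occurs at position $0$) one gets $p_{B^{(0)}}s_{B^{(1)}}=\epsilon\neq\phi(w)$: your gluing claim fails at every aligned junction, so with your normalization the forward implication is actually false. The fix is to choose the decomposition with $r_B=1$ throughout, allowing $p_B$ to be the \emph{full} word $\phi(w)$ (a legitimate prefix of itself) when $B$ sits at residue $0$; then $|s_B|+|p_B|=|w|$ always and $p_Bs_{B'}=\phi(w)$ at every junction, while (3) still holds since $s_{B^{(0)}}=\epsilon$. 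Separately, uniqueness of the decomposition genuinely fails when $\phi(w)$ is a proper power (e.g.\ $\phi(w)=aa$ gives $aaaa=\epsilon(aa)^2\epsilon=a(aa)a$), and nothing in the construction of $w$ guarantees primitivity of $\phi(w)$; in that degenerate case a single $B$ may occur at distinct residues modulo $|w|$, and one must argue (via Fine--Wilf: two occurrences of a length-$2|w|$ factor of $\phi(w)^\infty$ at incongruent positions force a period $g$ properly dividing $|w|$) that the assignment $B\mapsto(s_B,r_B,p_B)$ can still be made once per word $B$ consistently with all pairs in (2). Neither point is fatal, but both must be addressed for the forward direction to stand.
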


\begin{proof}
The proof is left to the reader.
\end{proof}

As the set of words of length $2|w|$ is algorithmically computable, it is decidable whether $x$ is uniformly recurrent (in fact periodic) when $\sigma$ is not growing.

\medskip

Consequently we suppose $\sigma$ is a substitution.

Let $K$ be the infimum of the $Q_\tau R_\tau |\tau |$, $\tau$ running over all primitive sub-morphisms of $\sigma$.
If $x$ is uniformly recurrent, then, from Corollary \ref{coro:Rsigma}, $x$ should be linearly recurrent for the constant $K$. 
In this case, $K$ being fixed with respect to the linear recurrence of $x$, we can take a power of $\sigma$ instead of $\sigma$: this will change neither $x$ nor $K$. 
Consequently, we can suppose that $\langle \sigma \rangle \geq (K+1)^2$ and the constants of Section \ref{subsec:derivedsequnifrec} can be freely used whenever we suppose $x$ uniformly recurrent.

The algorithm we now present is based on the fact (proven incidently below) that $x$ is uniformly recurrent if and only if in the set $\{ \mathcal{D}^n (x) | 1\leq n\leq 1+ K_1^{K_1K_2 +2} \}$ there are two identical sequences, where $K_1$ and $K_2$ are the computable constants given by Theorem \ref{theo:derbound}.

Let $u_1 = y_0$.
We start the algorithm computing $\sigma_{U_1}$ and $\psi_{u_1}$.
There are several intermediate objects that have to be determined. 
Let us remind the order in which we should proceed (see Section \ref{subsec:derivedsequnifrec}):

\begin{enumerate}
\item
\label{decid1}
$U_1 = \phi^{-1} (\{u_1\} )$,
\item
\label{decid2}
$\tilde{\mathcal{R}}_{y,U_1}$, $\mathcal{R}_{y,U_1}$, $\tilde{R}_{y,U_1}$ and $R_{y,U_1}$,
\item 
\label{decid3}
$\tilde{\Theta}_{y,U_1}$,
\item
\label{decid4}
$\tilde{\sigma}_{U_1}$ and $\sigma_{U_1}$, 
\item
\label{decid5}
$\delta_{U_1}$, $\Theta_{u_1}$ and $\psi_{u_1}$.
\end{enumerate}

Let us explain how to compute $\tilde{\mathcal{R}}_{y,U_1}$, $\mathcal{R}_{y,U_1}$, $\tilde{R}_{y,U_1}$ and $R_{y,U_1}$.
It suffices to compute $\tilde{\mathcal{R}}_{y,U_1}$.
In fact we will directly compute $\tilde{\sigma}_{U_1}$.
We will need the following lemmas.

\begin{lemma}
All words in $\mathcal{L}_2 (x)$ have an occurrence in $\sigma^{\# A^2} (x_0x_1)$. 
\end{lemma}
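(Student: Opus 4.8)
The statement claims that every word of length $2$ occurring in $x=\sigma^\infty(a)$ (the substitution $y$ is now denoted $x$ in this section) already occurs in the finite word $\sigma^{\#A^2}(x_0x_1)$. The plan is to track the length-$2$ factors of $x$ through iteration. Write $F_n = \mathcal{L}_2(\sigma^n(x_0x_1))$ for the set of length-$2$ factors of $\sigma^n(x_0x_1)$. First I would observe that the sequence $(F_n)_n$ is nondecreasing for inclusion: since $x_0x_1$ is a prefix of $\sigma(x_0x_1)$ (as $\sigma$ is prolongable on $a=x_0$) and, more to the point, $\sigma^n(x_0x_1)$ is a factor of $\sigma^{n+1}(x_0x_1)$ because $x_0x_1$ is a factor of $\sigma(x_0)$ (again prolongability) so $\sigma^n(x_0x_1)$ is a factor of $\sigma^n(\sigma(x_0))=\sigma^{n+1}(x_0)$, which is a prefix of $\sigma^{n+1}(x_0x_1)$. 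Hence $F_n\subseteq F_{n+1}$ for all $n$, and $\bigcup_n F_n = \mathcal{L}_2(x)$ because every factor of $x$ appears in some $\sigma^n(x_0)$ and thus in some $\sigma^n(x_0x_1)$.

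Next I would use the pigeonhole principle on the size of $F_n$. Each $F_n$ is a subset of $A\times A$, so $\#F_n \le \#A^2$ for every $n$. A nondecreasing sequence of subsets of a set of size $\#A^2$ can strictly increase at most $\#A^2$ times (in fact at most $\#A^2-1$ times if it starts nonempty, since $F_0=\{x_0x_1\}$ has one element), so there is some $m\le \#A^2$ with $F_m = F_{m+1}$. The key step is then to show that once $F_m=F_{m+1}$, the chain stabilises: $F_m = F_n$ for all $n\ge m$. This follows because $F_{n+1}$ is determined by $F_n$: the length-$2$ factors of $\sigma(w)$ depend only on the length-$2$ factors of $w$ (a factor $bc$ of $\sigma(w)$ is either an internal factor of some $\sigma(d)$ with $d$ a letter of $w$, or it straddles a boundary $\sigma(d)\sigma(d')$ for some factor $dd'$ of $w$, in which case $b$ is the last letter of $\sigma(d)$ and $c$ the first letter of $\sigma(d')$). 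So the map $F\mapsto \mathcal{L}_2(\sigma(\cdot))$ applied coordinatewise sends $F_n$ to $F_{n+1}$; if it fixes $F_m$ it fixes all subsequent terms. Therefore $\mathcal{L}_2(x)=\bigcup_n F_n = F_m \subseteq F_{\#A^2}$, which is exactly the claim.

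\textbf{Main obstacle.} The only delicate point is the precise bound $\#A^2$ rather than, say, $\#A^2-1$ or $\#A^2+1$: one must be careful about whether $F_0$ is counted, whether $\sigma$ might have $\sigma(x_0)$ of length $1$ (excluded since $\sigma$ is prolongable on $x_0$, so $\sigma(x_0)=x_0u$ with $u$ nonempty, giving $|\sigma(x_0x_1)|\ge 2$ and $F_0$ well-defined and nonempty), and exactly how many strict inclusions can occur. Since $F_0$ is a nonempty subset of a set with $\#A^2$ elements, the chain $F_0\subseteq F_1\subseteq\cdots$ stabilises after at most $\#A^2-1$ strict steps, hence $F_{\#A^2-1}=F_n$ for all larger $n$, and a fortiori $F_{\#A^2}$ contains $\mathcal{L}_2(x)$; so the stated bound holds with room to spare. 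Everything else is routine bookkeeping about how factors of $\sigma(w)$ arise from factors of $w$, which I would state in one sentence rather than belabour.
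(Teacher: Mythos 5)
Your proof is correct; the paper simply leaves this lemma to the reader, and your pigeonhole argument on the nondecreasing chain $F_n=\mathcal{L}_2(\sigma^n(x_0x_1))$, combined with the observation that $F_{n+1}$ is determined by $F_n$ (valid here because at this stage of the paper $\sigma$ has been reduced to a non-erasing substitution, so every length-$2$ factor of $\sigma(w)$ comes from a length-$2$ factor of $w$), is evidently the intended one. The stabilisation after at most $\#A^2-1$ strict inclusions gives $\mathcal{L}_2(x)=F_{\#A^2-1}\subseteq F_{\#A^2}$, which is the stated bound.
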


\begin{proof}
The proof is left to the reader.
\end{proof}

As a consequence we obtain the following lemma.

\begin{lemma}
\label{lemma:returniterate}
For all $n$, all words in $\mathcal{L}_n (x)$ have an occurrence in $\sigma^{n+\# A^2} (x_0x_1)$. 
\end{lemma}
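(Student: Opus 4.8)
The statement to prove is Lemma \ref{lemma:returniterate}: for all $n$, every word of $\mathcal{L}_n(x)$ occurs in $\sigma^{n+\# A^2}(x_0x_1)$. The plan is to reduce this to the already-established previous lemma (every word of $\mathcal{L}_2(x)$ has an occurrence in $\sigma^{\# A^2}(x_0x_1)$) together with the fact that $\phi$ is a coding, so that $x=\phi(y)$ and $\mathcal{L}(x)=\phi(\mathcal{L}(y))$ and, crucially, $|\phi(w)|=|w|$ for every $w$. First I would pass to the level of $y=\sigma^\infty(a)$: since $\phi$ is length-preserving, it suffices to show that every word of $\mathcal{L}_n(y)$ occurs in $\sigma^{n+\# A^2}(y_0y_1)$ (apply $\phi$ to such an occurrence to get the occurrence in $\phi(\sigma^{n+\# A^2}(y_0y_1))=\sigma^{n+\# A^2}(x_0x_1)$, using $\phi\sigma=\sigma\phi$ is \emph{not} needed — rather one works with $\sigma$ on $A^*$ directly and applies $\phi$ only at the end; note $x_0x_1=\phi(y_0y_1)$).

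\textbf{Key steps.} The core is an induction on $n$. The base cases $n=1,2$ follow from the previous lemma (for $n=1$ it is trivial; for $n=2$ it is exactly that lemma, at the level of $y$, which is what its proof actually establishes). For the inductive step, suppose every word of $\mathcal{L}_n(y)$ occurs in $\sigma^{n+\# A^2}(y_0y_1)$, and let $w\in\mathcal{L}_{n+1}(y)$. Write $w=w'c$ with $w'\in\mathcal{L}_n(y)$ and $c$ a letter; actually it is cleaner to take $w=bw''$ with $b$ a letter and $w''\in\mathcal{L}_n(y)$. Since $w''b'\in\mathcal{L}_{n+1}(y)\subseteq\mathcal{L}(y)$ for a suitable following letter $b'$, and $w$ is a factor of $y$, pick any letter $e\in A$ whose image $\sigma(e)$ contains $w$ as a factor — such $e$ exists because $y=\sigma^\infty(a)$ is a fixed point, so each factor of $y$ of length $\ell$ appears inside $\sigma(f)$ for some factor $f$ of $y$ of length at most $\lceil \ell/\langle\sigma\rangle\rceil+1\le \ell$ (since $\langle\sigma\rangle\ge 2$); in particular any factor of $y$ of length $n+1$ occurs inside $\sigma(g)$ for some factor $g$ of $y$ of length $\le n$, hence inside $\sigma(\mathcal{L}_n(y))$. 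Now apply $\sigma$ to the inductive hypothesis: $\sigma$ maps the occurrence of $g$ inside $\sigma^{n+\# A^2}(y_0y_1)$ to an occurrence of $\sigma(g)$ inside $\sigma^{n+1+\# A^2}(y_0y_1)$, and $w$ occurs inside $\sigma(g)$, hence inside $\sigma^{n+1+\# A^2}(y_0y_1)$. Finally apply $\phi$: $w$ occurs in $\sigma^{n+1+\# A^2}(y_0y_1)$ implies $\phi(w)$ occurs in $\phi(\sigma^{n+1+\# A^2}(y_0y_1))=\sigma^{n+1+\# A^2}(x_0x_1)$, and $\phi$ being a bijective-on-letters coding, $\mathcal{L}_{n+1}(x)=\phi(\mathcal{L}_{n+1}(y))$, so every word of $\mathcal{L}_{n+1}(x)$ is covered. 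One must just double-check the roles of $x_0x_1$ vs $y_0y_1$: since $\phi$ is a coding it is injective on letters, so $x_0x_1=\phi(y_0y_1)$ determines $y_0y_1$, and $\sigma^{m}(x_0x_1)=\phi(\sigma^{m}(y_0y_1))$ for all $m$ because $\sigma$ here denotes a morphism on $A^*$ and $\phi\colon A^*\to B^*$ commutes with it only in the trivial sense that $\phi$ distributes over concatenation; to be safe one should fix notation so that "$\sigma^{m}(x_0x_1)$" in the statement means $\phi(\sigma^{m}(y_0y_1))$.

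\textbf{Main obstacle.} The only real subtlety is the step "every factor of $y$ of length $n+1$ occurs inside $\sigma(g)$ for some $g\in\mathcal{L}_n(y)$": this uses that $y$ is a fixed point of $\sigma$ and that $\langle\sigma\rangle\ge 2$ (which holds after the reduction to $\langle\sigma\rangle\ge(K+1)^2$ made just before in the text, so $\langle\sigma\rangle\ge 4$). Concretely, write $y=\sigma(y)$; any window of length $n+1$ in $y$ is contained in the image $\sigma(y_{[i,j]})$ of a window of $y$ of length $j-i+1$, and since each letter expands to at least $2$ symbols, a window of output length $n+1$ is captured by an input window of length at most $\lceil (n+1)/2\rceil+1\le n$ for $n\ge 2$ (and $n=1$ is a base case), hence $g$ can be chosen in $\mathcal{L}_n(y)$ — not just $\mathcal{L}_{\le n}(y)$, by padding $g$ on the right within $y$ if it is shorter. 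Everything else is bookkeeping with the coding $\phi$ and an iteration of $\sigma$, so no further difficulty is expected.
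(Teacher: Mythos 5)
Your proof is correct and is surely the intended argument: the paper leaves this proof to the reader, and the natural route is exactly your induction on $n$, with base case the preceding lemma and inductive step given by desubstituting a length-$(n+1)$ factor of $y=\sigma(y)$ into $\sigma(g)$ for some $g\in\mathcal{L}_{\le n}(y)$ (using $\langle\sigma\rangle\ge 2$, which is available since the paper has just normalized $\langle\sigma\rangle\ge (K+1)^2$), then applying the induction hypothesis and the coding $\phi$ at the end. Only two cosmetic points: your intermediate bound $\lceil(n+1)/2\rceil+1\le n$ fails at $n=2$, though the correct count (the $j-i-1$ fully covered letters each contribute at least $2$ symbols, giving $j-i+1\le\lfloor(n+3)/2\rfloor\le n$ for $n\ge 2$) does yield $|g|\le n$ as you assert; and the phrase ``pick any letter $e$ whose image contains $w$'' is a false start that you immediately replace by the correct window argument.
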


\begin{proof}
The proof is left to the reader.
\end{proof}

\begin{lemma}
\label{lemma:UER}
Suppose $x$ is uniformly recurrent.
For all $(w',u') \in \tilde{\mathcal{R}}_{y,U_1}$, the word $w'u'$ has an occurrence in the word
$\sigma^{K+1 +\#A^2} (wu_1)$.
\end{lemma}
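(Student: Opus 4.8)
The goal is to bound the position at which an arbitrary return‑word pair $(w',u')\in\tilde{\mathcal R}_{y,U_1}$ already reveals itself inside an iterate $\sigma^N(wu_1)$ applied to the prefix, with $N=K+1+\#A^2$. The plan is to combine two ingredients already at hand: first, the fact from Lemma \ref{lemma:UER}'s predecessors that iterates of $\sigma$ applied to $x_0x_1$ pick up all factors of $x$ of a prescribed length (Lemma \ref{lemma:returniterate}), and second, the length control on return words coming from linear recurrence (Theorem \ref{theo:encad}, item \eqref{theo:item:1}). Recall that $u_1=y_0$, so $v=\phi(u_1)=x_0$ is a single letter and $U_1=\phi^{-1}(\{x_0\})$ is a set of letters of $A$; the pair $(w,u_1)$ is the unique element of $\tilde{\mathcal R}_{y,U_1}$ with $wu_1$ a prefix of $y$, and here $w$ is itself essentially empty or a single‑letter matter, so $\sigma^N(wu_1)$ is (a small modification of) $\sigma^N(y_0y_1)=\sigma^N(x_0x_1)$ read through $\phi$ — this is the link to the two length‑transfer lemmas.

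The key steps, in order, are as follows. First I would bound $|w'|$: by definition a return word $w'$ to $U_1$ in $y$ maps under $\phi$ to a word whose length is bounded by the maximal gap between successive occurrences of the letter $x_0=\phi(u_1)$ in $x$; since $x$ is assumed uniformly recurrent and (by Corollary \ref{coro:Rsigma}, after the reduction made just before this lemma) linearly recurrent for the constant $K$, Theorem \ref{theo:encad}\eqref{theo:item:1} with $u=x_0$ (a word of length $1$) gives $|w'|=|\phi(w')|\le K$, whence $|w'u'|\le K+1$. Second, I would invoke Lemma \ref{lemma:returniterate} with $n=K+1$: every word of $\mathcal L_{K+1}(x)$ — in particular every $\phi(w'u')$ with $(w',u')\in\tilde{\mathcal R}_{y,U_1}$ — has an occurrence in $\sigma^{K+1+\#A^2}(x_0x_1)$ read through $\phi$; pulling this occurrence back through $\phi$ (which is a coding, so length‑preserving and occurrence‑reflecting up to the preimage alphabet) and using that $y=\sigma^\infty(a)$ so that $\sigma^{K+1+\#A^2}(x_0x_1)$ is a factor of $y$, one obtains that $w'u'$ itself occurs in $\sigma^{K+1+\#A^2}(wu_1)$ after the harmless identification of the prefix $wu_1$ with $y_0y_1$. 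Third, I would check the bookkeeping that the occurrence of $w'u'$ as a factor of $y$ found this way is genuinely an occurrence of the return‑word pair, i.e. that the occurrence of $w'$ starts at an occurrence of an element of $U_1$, that $u'\in U_1$, and that there is no intervening occurrence of $U_1$; this is immediate because the property of being a return‑word pair is a local property of the bi‑infinite (here one‑sided, far enough from the start) context, and $\sigma^{K+1+\#A^2}(x_0x_1)$ carries enough context around the occurrence.

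The main obstacle I expect is the third step — the careful matching of "factor occurrence in an iterate" with "return‑word‑pair occurrence". One must make sure that the occurrence produced by Lemma \ref{lemma:returniterate} is flanked on the left by the required occurrence of an element of $U_1$ rather than merely sitting somewhere in the middle of a block, and that no occurrence of $U_1$ is hidden strictly inside $w'$ (which would contradict $w'\in\mathcal R_{y,U_1}$ anyway, but must be argued from the definition, not assumed). The clean way around this is to observe that by definition of $w'u'$ the word $w'u'$ already contains, at its start, an occurrence of a letter of $U_1$ and, at position $|w'|$, another, with none in between; so any occurrence of the \emph{word} $w'u'$ in $y$ is automatically an occurrence of the \emph{pair}, and there is nothing extra to verify. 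With that remark the three steps above assemble into the statement, and the index $K+1+\#A^2$ is exactly $n+\#A^2$ with $n=K+1$ the length bound from linear recurrence, as claimed.
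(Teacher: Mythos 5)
Your proposal is correct and follows essentially the same route as the paper: bound $|w'u'|\leq K+1$ via linear recurrence (Theorem \ref{theo:encad}\eqref{theo:item:1} applied to the length-one word $\phi(u_1)$, transferred through the coding $\phi$), then invoke Lemma \ref{lemma:returniterate} with $n=K+1$. The extra bookkeeping you supply (the $x$ versus $y$ identification through $\phi$, and the remark that any occurrence of the word $w'u'$ is automatically an occurrence of the return-word pair) is sound and only makes explicit what the paper leaves implicit.
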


\begin{proof}
>From Lemma \ref{lemma:returniterate}, all words of length $K+1$ have an occurrence in the word $\sigma^{K+1+\# A^2} (wu_1)$.
We achieve the proof observing that, $x$ being uniformly recurrent, for all $(w',u') \in \tilde{\mathcal{R}}_{y,U_1}$ we get $|w'u'|\leq K+1$. 
\end{proof}

In $y_{[0,K]}$ there should be at least two occurrences of $u_1$: $0$ and some other. 
If this is not the case, then $x$ is not uniformly recurrent.
Let $j$ be the second occurrence of $u_1$ is $y$. 
Then, $(w_1,u_1) =(y_{[0,j-1]}, u_1)$ is an element of $\tilde{\mathcal{R}}_{y,U_1}$.
(In what follows we use the notations of Section \ref{subsec:derivedsequnifrec}.)
Below, all the $(w_i,u_i)$ will be elements of $\tilde{\mathcal{R}}_{y,U_1}$.
We have:

\begin{itemize}
\item
$\tilde{\sigma}_U (w_1,u_1 )= (w_1 , u_1)\cdots (w_{r_1} , u_{r_1} )$ where 
\item
$\sigma (w_1u_1) = w_1 \dots w_{r_1} m(u_{r_1})s(u_{r_1})$ and is a prefix of $y$, and,
\item
$m(u_{r_1})s(u_{r_1})$ is a suffix $\sigma (u_1)$.
\end{itemize}

Observe that $r_1$ is necessarily greater than or equal to $2$.
All the $w_i$ should have a length less than $K$, otherwise $x$ is not uniformly recurrent. 
We continue with $(w_2 , u_2)$: 

\begin{itemize}
\item
$\tilde{\sigma}_U (w_2,u_2 )= (w_{r_{1}+1} , u_{r_{1}+1})\cdots (w_{r_2} , u_{r_2} )$ where $r_2\geq r_1+1$ and 
\item
$\sigma (w_1 w_2 u_2) = w_1 \dots w_{r_2} m(u_{r_2})s(u_{r_2})$ and is a prefix of $y$, and,
\item
$m(u_{r_2})s(u_{r_2})$ is a suffix $\sigma (u_2)$.
\end{itemize}

All the $w_i$ should have a length less than $K$, otherwise $x$ is not uniformly recurrent. 

Once we proceeded $I$ steps we get: for $1\leq i\leq I$,

\begin{itemize}
\item
$\tilde{\sigma}_U (w_i,u_i )= (w_{r_{i-1}+1} , u_{r_{i-1}+1})\cdots (w_{r_{i}} , u_{r_{i}} )$ where $r_{i}\geq r_{i-1}+1$ and 
\item
$\sigma (w_1 w_2\cdots w_i u_i) = w_1 \dots w_{r_{i}} m(u_{r_{i}})s(u_{r_{i}})$ and is a prefix of $y$, and,
\item
$m(u_{r_i})s(u_{r_i})$ is a suffix $\sigma (u_i)$.
\end{itemize}

We stop when $I$ satisfies $|w_1 \dots w_{r_{I}}|  \geq |\sigma^{K+1 +\#A^2} (wu_1)|$.
Indeed, from Lemma \ref{lemma:UER}, if $x$ is uniformly recurrent then we can find all the elements of $\tilde{\mathcal{R}}_{y,U_1}$ looking at $\sigma (w_1 w_2\cdots w_I u_I)$.
If we find more than $K_1$ elements then $x$ is not uniformly recurrent. 
Then we check that for the step $I+1$ no new elements of $\tilde{\mathcal{R}}_{y,U_1}$ appear. 
If some appears, then $x$ is not uniformly recurrent. 
Otherwise, no new such element would appear in any steps $i$ greater than $I$. 

Then, we continue computing $\mathcal{R}_{y,U_1}$, $\tilde{R}_{y,U_1}$, $R_{y,U_1}$ and $\tilde{\Theta}_{y,U_1}$, and,
$\sigma_{U_1}$, $\delta_{U_1}$, $\Theta_{u_1}$ and $\psi_{u_1}$ using their definition.

We recall that when $x$ is uniformly recurrent then, from Corollary \ref{coro:Dn}, for all $n$ we get 

\begin{equation}
\label{eq:derivedUR}
\mathcal{D}^n (x) = \mathcal{D}_{u_n} (x) \hbox{ and } u_{n+1} = \Theta_{x,u_n} (1)u_n .
\end{equation}

Thus we compute $u_2$ and $U_2 = \phi^{-1} (\{ u_2 \})$ and proceeding as previously we can algorithmically define $\sigma_{U_2}$, $\delta_{U_2}$, $\Theta_{u_2}$ and $\psi_{u_2}$ exiting the algorithm as it is described above. 
Continuing in this way we compute $\{ u_n | 1\leq n\leq 1+ K_1^{K_1K_2 +2} \}$ and the set

$$
S = \{ (\sigma_{U_n} , \psi_{u_n} )  | U_n= \phi^{-1} (\{u_n\}), 1\leq  n\leq  1+ K_1^{K_1K_2+2}\} 
$$

exiting the algorithm when needed.

If all the elements of $S$ are different (which is algorithmically checkable) then, due to Theorem \ref{theo:derbound}, $x$ cannot be uniformly recurrent.

Suppose there exist $n$ and $m$, $n< m\leq  1+ K_1^{K_1K_2+2}$, such that $(\sigma_{U_n} , \psi_{u_n} ) = (\sigma_{U_m} , \psi_{u_m} )$, then $\mathcal{D}^n (x) = \mathcal{D}^m (x)\in R^\mathbb{N}$ where $R = R_{x,u_n}$.
Consequently $\mathcal{D}^n (x) = \mathcal{D}^{n+k(m-n)} (x)$ for all $k$.
It remains to show that $x$ is uniformly recurrent.

We set $z = \mathcal{D}^n (x)$.
>From \eqref{eq:derivedUR} the sequence $(u_{n+k(m-n)})_{k\in \mathbb{N}}$ is a strictly increasing sequence of prefixes of $x$ and, consequently, there exists $k$ such that for each return word $w$ of $\mathcal{R}_{x,u_n}$, the word $wu_n$ has an occurrence in $u_{n+k(m-n)}$. 
As $u_{n+k(m-n)}$ is a prefix of $w'u_{n+k(m-n)}$ for all $w'\in \mathcal{R}_{x, u_{n+k(m-n)}}$, we deduce that, for each $(w,w')\in \mathcal{R}_{x,u_n} \times \mathcal{R}_{x, u_{n+k(m-n)}}$, 
the word $wu_n$ occurs in $w'u_{n+k(m-n)}$.
In other terms, for all letters $i$ and $j$ in $R$, $i$ occurs in $\tau (j)$ where $\tau $ is the unique morphism satisfying 

\begin{align*}
\label{align:tau}
\Theta_{x,u_n} \tau = \Theta_{x,u_{n+k(m-n)}}.
\end{align*}

This means that the incidence matrix of $\tau$ has positive entries.
Moreover it can be easily checked that all images of $\tau$ start with the letter $1$.
Consequently, $\tau$ is a primitive substitution prolongable on the letter $1$ whose unique fixed point is $z$.
Hence $z$ is uniformly recurrent. 
As $x = \Theta_{x,u_n} (z)$, $x$ is also uniformly recurrent and the uniform recurrence is finally decidable.

\section{Remarks and conclusions}
\label{section:remarks}
\subsection{Morphic sequences versus substitutive sequences}
\label{subsec:morphicsub}

In Section \ref{section:decid} we prove more than what we were searching. 
We incidently obtained Theorem \ref{theo:morphic-sub}.

Let us sketch the proof.

Taking the notation of Section \ref{section:decid}, we got $x = \Theta_{x,u_n} (z)$ and $z=\mathcal{D}^{m-n} (z)$.
Proposition \ref{prop:derder} implies the existence of a prefix $w$ of $z$ such that $\Theta_{z,w} (z) = z$. 
There is no difficulty to check that $\Theta_{z,w}$ is a primitive substitution. 
Thus, we conclude with Proposition 9 in \cite{Durand:1998}.

In fact, Theorem \ref{theo:morphic-sub} was already known in \cite{Durand:1998} (or \cite{Durand:1996} where it is illustrated with the Chacon sequence) for non-erasing morphisms and growing substitutions (see also \cite{Damanik&Lenz:2006}).

Observe that if we do not assume uniform recurrence of the morphic sequence, then it is not necessarily a substitutive sequence. 
For example, let $\sigma$ be the endomorphism defined on $\{ 0, 1\}^*$ by $\sigma (0) = 001$ and $\sigma (1) = 1$.
The sequence $x=\sigma^\infty (0)$ is not uniformly recurrent and is not substitutive (it can not be defined by a growing endomorphism).
For more explanations, let us recall the main result in \cite{Pansiot:1984}.

\begin{theo}
Let $\sigma$ be a non-erasing endomorphism prolongable on the letter $a$.
Let $x=\sigma^\infty (a)$. 
Then,
\begin{itemize}
\item
If $\sigma $ is growing then $(p_x (n))_n$ increases like $1$, $n$, $n\log\log n$ or $n \log n$;
\item
If $\sigma $ is non-growing such that the words consisting of non-growing letters have bounded length, then $x$ is a morphic sequence w.r.t. a substitution and its word complexity grows like in the previous case;
\item
If $\sigma $ is non-growing such that the words consisting of non-growing letters have unbounded length, then $(p_x (n))_n$ increases like $n^2$.
\end{itemize}
\end{theo}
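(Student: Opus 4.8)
The plan is to recover Pansiot's trichotomy \cite{Pansiot:1984} by peeling off the non-growing cases and then running a finer analysis of the growing one. If $\sigma$ is not growing, let $C$ be its set of non-growing letters. When the $C$-factors of $x$ have bounded length we are in case \eqref{lemma:enum:pansiot2} of Lemma \ref{lemma:pansiot}: collapsing each maximal block of non-growing letters to a new letter produces a non-erasing \emph{growing} endomorphism $\sigma'$ prolongable on some $a'$ and a coding $\pi$ with $x=\pi(\sigma'^{\infty}(a'))$; since outside finitely many blocks this is letter-to-letter, $p_x$ and $p_{\sigma'^{\infty}(a')}$ differ by a bounded multiplicative constant, so this case reduces to the growing case (and exhibits $x$ as substitutive, as asserted). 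When the $C$-factors are unbounded, Lemma \ref{lemma:pansiot}\eqref{lemma:enum:pansiot3} gives a growing letter $b$ occurring in $x$ with, say, $\sigma(b)=ubv$ and $u\in C^+$; iterating, $\sigma^n(b)$ has a prefix $P_n b$ with $P_n=\sigma^{n-1}(u)\cdots\sigma(u)u\in C^+$ and $|P_n|=\Theta(n)$, so $x$ carries arbitrarily long $C$-blocks abutting a copy of $b$. The upper bound $p_x(n)=O(n^2)$ is the bound valid for every morphic sequence (the first difference $p_x(n+1)-p_x(n)$, which counts right-special factors, is $O(n)$ for a D0L sequence), so here it remains to prove $p_x(n)=\Omega(n^2)$, which I would extract from the interplay between the $\Omega(n)$ pairwise-distinct length-$\le n$ suffixes of the words $P_m$ and the $\Omega(n)$ offsets at which a length-$n$ window can straddle the end of such a block.

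Assume now $\sigma$ is growing, i.e.\ $\theta(c)>1$ for every $c$ (discussion after Proposition \ref{prop:croissance}); restrict to the letters actually occurring in $x$ and, replacing $\sigma$ by a power, arrange via Proposition \ref{prop:croissance} that every letter has a growth type. The central device is \emph{desubstitution of factors}: every factor $w$ of $x$ is a factor of $\sigma(\overline{w})$ for an \emph{ancestor} factor $\overline{w}$ of $x$ with $|\overline{w}|\le c_1|w|/\langle\sigma\rangle$, and $w$ is determined by $\overline{w}$ together with $O(1)$ boundary data (its offset inside $\sigma(\overline{w})$ and bounded end-corrections). Iterating roughly $\log n$ times reduces a length-$n$ factor to one of bounded length, so that in all cases $p_x(n)=O(n\log n)$, which is the worst regime; locating the exact class amounts to estimating how quickly the ancestor lengths genuinely contract, which is governed by the spread of the growth types.

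If all letters occurring in $x$ share the growth type $(0,\alpha)$ --- the quasi-uniform case --- one argues as for primitive substitution fixed points (where $p_x(n)=O(n)$ already follows from linear recurrence, Theorem \ref{theo:encad}): all images have comparable length, the desubstitution contracts geometrically with only boundedly many genuinely distinct bispecial factors appearing at each scale, and $p_x(n)=O(n)$; eventual periodicity is the degenerate sub-case $p_x(n)=O(1)$. If all letters have the same rate $\alpha$ but some has positive polynomial degree --- the polynomially diverging case --- the lengths $|\sigma^n(c)|$ differ only by polynomial factors $n^{d(c)}$, and a multi-scale count shows that only $O(\log\log n)$ of the $\approx\log n$ desubstitution steps contribute a fresh offset (the polynomial discrepancies only double finitely often within a geometric window), yielding $p_x(n)=\Theta(n\log\log n)$. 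If two distinct growth rates occur --- the exponentially diverging case --- the discrepancies are themselves geometric, every step contributes, and one obtains the matching lower bound $p_x(n)=\Omega(n\log n)$ by exhibiting $\Omega(\log n)$ genuinely new length-$n$ factors at each scale, built from blocks that grow at the different rates. One finally checks that collapsing blocks, passing to powers of $\sigma$, and inserting a coding each change $p_x$ only by a bounded multiplicative factor, so the four growth classes are intrinsic to $x$.

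The main obstacle is entirely inside the growing case: establishing the \emph{lower} bounds and, above all, isolating the combinatorial dichotomy between the $n\log\log n$ and $n\log n$ regimes, where the $\log\log$ requires a delicate count of how nested intervals at unequal-but-commensurable polynomial rates can be refined. By contrast the $O(n^2)$ and $O(n\log n)$ upper bounds are routine desubstitution estimates, and the non-growing reductions are bookkeeping once Lemma \ref{lemma:pansiot} is available.
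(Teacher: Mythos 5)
This theorem is not proven in the paper: it is Pansiot's classification, recalled verbatim from \cite{Pansiot:1984}, so there is no internal proof to compare your attempt against. Your outline does reproduce the architecture of Pansiot's original argument --- reduce the bounded-non-growing-block case to the growing case by recoding blocks, isolate the unbounded-block case as the quadratic one, and split the growing case into quasi-uniform, polynomially divergent and exponentially divergent regimes giving $\Theta(n)$, $\Theta(n\log\log n)$ and $\Theta(n\log n)$ respectively. But as written it is a plan, not a proof: every genuinely hard step is deferred.

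Concretely: (i) the $\Omega(n^2)$ lower bound is not established, and your ``suffixes of $P_m$ times offsets'' count collapses precisely in the typical case where the non-growing blocks are powers of a single letter (e.g.\ $\sigma(0)=001$, $\sigma(1)=1$, where $P_m=1^{\Theta(m)}$ and there are only $n$ distinct suffixes of length $\le n$ in total); the quadratic count must instead come from pairing each block length $i\le n$ with the varying right-continuations after blocks of length at least $i$, which needs an argument. (ii) In the quasi-uniform growing case you invoke linear recurrence (Theorem \ref{theo:encad}), but a quasi-uniform non-primitive growing morphism need not produce a uniformly recurrent, let alone linearly recurrent, fixed point; and the naive desubstitution recursion $p_x(n)\le |\sigma|\, m\, p_x(m)$ with $m\approx n/\langle\sigma\rangle+2$ does not close to $O(n)$ --- one must count special or bispecial factors across scales, which is the actual content of Pansiot's proof. (iii) The separation between $n\log\log n$ and $n\log n$, which you yourself identify as the main obstacle, rests on the heuristic that only $O(\log\log n)$ desubstitution steps ``contribute a fresh offset''; no mechanism is given for why polynomial length discrepancies yield $\log\log n$ rather than $\log n$ new factors per length, nor for the matching lower bounds. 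Since the statement is a $\Theta$-classification, these lower bounds are not optional refinements but the substance of the theorem, so the proposal as it stands is an accurate table of contents for \cite{Pansiot:1984} rather than a proof.
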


Therefore the word complexity $p_x$ is quadratic.
If $x$ was substitutive, then the underlying endomorphism would be growing and thus its word complexity would increase at most like $n\log n$. This is not possible. 

In \cite{Nicolas&Pritykin:2009} it is observed that uniformly recurrent morphic sequences $x$ have subaffine word complexity. 
This result is a straightforward corollary of Theorem \ref{theo:morphic-sub} but it has a direct and short proof.
Suppose that $x$ is not periodic, then from a result of Pansiot (Lemma \ref{lemma:pansiot}) we can suppose $x$ is morphic w.r.t. a growing substitution $\sigma$.  
As $\sigma $ has a primitive sub-morphism $\tau$, by uniform recurrence we have that $\mathcal{L} (x)$ is the morphic image of the language of a primitive substitution. As primitive substitutions have sub-affine complexity, so has $x$.

\subsection{Derived sequences with respect to words that are not prefixes}

In \cite{Holton&Zamboni:1999} the authors studied the notion of derived sequences not only on prefixes but on all words of the uniformly recurrent sequence $x$ considered. 
Let us call these sequences {\em descendant} of $x$ as they did.
They are defined as follows.
Let $u\in \mathcal{L} (x)$ and $w$ be such that $x=wuy$ where $wu$ has a unique occurrence of $u$. 
The sequence $\mathcal{D}_u (uy)$ is called {\em descendant} of $x$ with respect to $u$. 
We denote it by $\mathcal{D}_u (x)$.

They improved the main result in \cite{Durand:1998} showing the following.

\begin{theo}
\label{theo:HZ}
Let $x$ be a uniformly recurrent sequence.
Then, $x$ is a primitive substitutive sequence if and only if the set of its descendants is finite. 
\end{theo}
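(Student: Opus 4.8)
\textbf{Proof plan for Theorem \ref{theo:HZ}.}

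The plan is to reduce the descendant case to the prefix case already handled by Theorem \ref{theo:maindurand1998}, using a standard trick to convert an arbitrary factor into a prefix of a related sequence. First I would recall that, by definition, a descendant of $x$ with respect to $u\in\mathcal{L}(x)$ is $\mathcal{D}_u(uy)$ where $x=wuy$ with $wu$ having a unique occurrence of $u$; since $x$ is uniformly recurrent, $uy$ is also uniformly recurrent and $\mathcal{L}(uy)=\mathcal{L}(x)$. So it suffices to understand the derived sequences $\mathcal{D}_u(t)$ for all prefixes $u$ of all sequences $t$ in the (finite) set of sequences obtained from $x$ by shifting to an occurrence of each factor — equivalently, for each $u\in\mathcal{L}(x)$ we pick the sequence $t_u = uy$ and note $\mathcal{D}_u(t_u)$ is the descendant.

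For the necessary condition (primitive substitutive $\Rightarrow$ finitely many descendants): if $x=\varphi(\sigma^\infty(b))$ with $\sigma$ primitive and $\varphi$ a coding, then for each $u\in\mathcal{L}(x)$ the sequence $t_u=uy$ is again of the form $\varphi(\sigma^\infty(\cdot))$ up to a bounded shift — more precisely, $t_u=\varphi(S^k(\sigma^\infty(b)))$ for some $k\leq$ (a bound coming from the return words of the prefix), and there are only finitely many sequences in the shift orbit closure that are relevant up to the length of $u$. The cleaner route is to invoke Theorem \ref{theo:derbound}: applying it with the base point moved to an occurrence of $u$ gives that $\#\{\mathcal{D}_u(uy)\mid u\in\mathcal{L}(x)\}$ is bounded by the same computable quantity $K_1^{K_1K_2+2}$, since the whole machinery of Section \ref{subsec:derivedsequnifrec} only used that $x$ (hence $uy$) is uniformly recurrent and linearly recurrent with a fixed constant $K$, plus Proposition \ref{prop:boundedpreimages}, none of which requires $u$ to be a prefix of the original fixed point.

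For the sufficient condition (finitely many descendants $\Rightarrow$ primitive substitutive): since every derived sequence on a prefix is in particular a descendant, finiteness of the descendant set implies finiteness of the set $\{\mathcal{D}^n(x)\mid n\geq 1\}$, so by the pigeonhole argument of Section \ref{section:decid} there are $n<m$ with $\mathcal{D}^n(x)=\mathcal{D}^m(x)$; then exactly as in Section \ref{subsec:morphicsub} one finds a prefix $w$ of $z=\mathcal{D}^n(x)$ with $\Theta_{z,w}(z)=z$, checks $\Theta_{z,w}$ is a primitive substitution, and concludes $z$ is purely primitive substitutive by Proposition 9 in \cite{Durand:1998}, whence $x=\Theta_{x,u_n}(z)$ is primitive substitutive.

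\textbf{Main obstacle.} The delicate point is the necessary direction: one must argue that the set of sequences $\{uy\mid u\in\mathcal{L}(x)\}$, together with their prefixes $u$, feeds into the bound of Theorem \ref{theo:derbound} uniformly. The issue is that $uy$ need not be a morphic sequence for a \emph{fixed} morphism independent of $u$; the remedy is that, $x$ being primitive substitutive, $uy$ lies in the (topologically transitive) subshift generated by $x$, and the occurrences of $u$ fall into boundedly many "contexts" inside the images $\sigma^{n+1}(a_1\cdots a_{K+1})$ used in Proposition \ref{prop:boundedpreimages}, so the preimage and return-word counts are bounded independently of $u$. Making this uniformity precise — essentially re-examining the proof of Theorem \ref{theo:derbound} with the base point of the induced sequence at an arbitrary occurrence of $u$ rather than at $0$ — is the crux, and I would carry it out by observing that every step of Section \ref{subsec:derivedsequnifrec} that mentioned "$u$ a prefix of $y$" can be replaced by "$u$ a factor of $y$, read from a fixed occurrence", with Lemma \ref{lemma:defsigmaU} and Proposition \ref{prop:sigmaUfinite} adapting verbatim.
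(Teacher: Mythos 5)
First, a point of reference: the paper does not prove Theorem \ref{theo:HZ} itself; it is quoted from \cite{Holton&Zamboni:1999}, and the closest argument in the text is the sketch given for its morphic analogue, Theorem \ref{theo:morphicdesc}. Your sufficient direction (finitely many descendants $\Rightarrow$ primitive substitutive) is correct and matches what the paper does elsewhere: descendants with respect to prefixes are exactly the derived sequences $\mathcal{D}_{u_n}(x)$, pigeonhole yields $n<m$ with $\mathcal{D}^n(x)=\mathcal{D}^m(x)$, and the argument of Section \ref{subsec:morphicsub} (a prefix $w$ of $z=\mathcal{D}^n(x)$ with $\Theta_{z,w}(z)=z$, primitivity of a power of $\Theta_{z,w}$, then Proposition 9 of \cite{Durand:1998}) concludes.

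The genuine gap is in the necessary direction, and it is precisely the point you label ``the crux'' without resolving. The claim that Lemma \ref{lemma:defsigmaU} and Proposition \ref{prop:sigmaUfinite} ``adapt verbatim'' when $u$ is an arbitrary factor is not correct as stated: the construction of $\sigma_U$ in Section \ref{subsec:derivedsequnifrec} is anchored at position $0$ of the fixed point. Proposition \ref{prop:def-DU} explicitly requires $y$ to have a prefix in $U$ (the paper defers the contrary case to the proof of Theorem \ref{theo:morphicdesc}), the observation that $p(u)$ is empty and $m(u)=u$ uses that $u$ is a prefix of $y=\sigma(y)$, and the identities $\tilde{\sigma}_U^\infty(w,u)=\Delta_U(y)$ and $\sigma_U^\infty(1)=\tilde{\mathcal{D}}_U(y)$, together with prolongability of $\sigma_U$ on the letter $1$, all rest on the seed pair sitting at the origin of a fixed point. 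Your proposed fix --- replacing $y$ by the shifted sequence $t_u=uy'$ so that $u$ becomes a prefix --- destroys exactly this structure, since $\sigma(S^k y)\neq S^k y$ in general, so $t_u$ is not the coding of a fixed point and Theorem \ref{theo:derbound} cannot be applied to it ``with the base point moved''. What works (and is what the paper does for Theorem \ref{theo:morphicdesc}) is a reduction that never re-anchors the substitution: writing $x=wuz$ with $wu$ containing a unique occurrence of $u$, every return word to the prefix $wu$, read from position $|w|$, is a concatenation of return words to $u$ in $uz$, which yields a morphism $\chi_u:R_{x,wu}^*\to R_{uz,u}^*$ with $\chi_u(\mathcal{D}_{wu}(x))=\mathcal{D}_u(uz)$. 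Since $|w|\leq K|u|$ and Theorem \ref{theo:encad} bounds the number and lengths of all return words involved, the set $\{\chi_u\mid u\in\mathcal{L}(x)\}$ is finite; combined with the finiteness of $\{\mathcal{D}_{wu}(x)\}$ (the prefix case, already in \cite{Durand:1998}, which in the primitive setting needs none of the preimage-counting machinery of Section \ref{subsec:derivedsequnifrec}), this gives finiteness of the descendant set. You should replace the ``verbatim adaptation'' step by this factorization, or else actually carry out the uniformity argument you only gesture at.
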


A similar result was recently stated in \cite{Kanel-Belov&Mitrofanov:preprint2011} by means of Rauzy graphs.

We think it is important and interesting to recall that C. Holton and L. Zamboni presented this result as 
"a symbolic counterpart to a theorem
of M. Boshernitzan and C. Carroll \cite{Boshernitzan&Carroll:1997} which states that an interval exchange transformation
with lengths in a quadratic field has only finitely many descendants".
Moreover, this last result is presented as an extension of the classical Lagrange's theorem asserting that the continued fraction expansion of  a quadratic irrational is eventually periodic.

Due to Theorem \ref{theo:morphic-sub}, Theorem \ref{theo:HZ} also holds for uniformly recurrent morphic sequences (instead of primitive substitutive sequences).

\begin{theo}
\label{theo:morphicdesc}
Let $x$ be a uniformly recurrent sequence.
Then, $x$ is a morphic sequence if and only if the set of its descendants is finite. 
\end{theo}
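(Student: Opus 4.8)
The plan is to establish Theorem~\ref{theo:morphicdesc} by reducing descendants to derived sequences on prefixes, thereby combining Theorem~\ref{theo:HZ} with Theorem~\ref{theo:morphic-sub}. Recall that for a uniformly recurrent sequence $x$ and a word $u\in\mathcal{L}(x)$, the descendant $\mathcal{D}_u(x)$ is $\mathcal{D}_u(uy)$, where $x=wuy$ and $wu$ has a unique occurrence of $u$. So a descendant of $x$ is precisely a derived sequence (on a prefix) of the shifted sequence $uy$. The key observation is that up to taking a suitable shift, $uy$ has the same language as $x$ and, more importantly, that the relevant structural finiteness passes between $x$ and the finitely many sequences of the form $uy$.

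For the forward implication, suppose $x$ is a uniformly recurrent morphic sequence. By Theorem~\ref{theo:morphic-sub}, $x$ is primitive substitutive, hence by Theorem~\ref{theo:HZ} the set of descendants of $x$ is finite. For the converse, suppose the set of descendants of $x$ is finite; in particular, taking $u=x_0$ (so $w$ is empty, $uy=x$), the set $\{\mathcal{D}_v(x)\mid v\text{ a prefix of }x\}$ of derived sequences on prefixes is a subset of the set of descendants, hence finite. First I would invoke Theorem~\ref{theo:maindurand1998}: a uniformly recurrent sequence whose set of derived sequences on prefixes is finite is a morphic sequence. This gives immediately that $x$ is morphic, which is exactly the conclusion we want. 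In fact this direction does not even need the full strength of Theorem~\ref{theo:HZ}; it only uses that prefixes are among the words considered, so the descendant set contains the prefix-derived-sequence set.

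A subtlety to address is the case flagged in the text after Proposition~\ref{prop:def-DU}: when the sequence $y$ (here playing the role of $uy$ or of the morphic preimage) does not have a prefix in the set $U$. This is precisely the situation one meets when working with $uy$ for a word $u$ that is not a prefix of the original purely morphic $y'=\sigma^\infty(a)$. The remedy is standard: given a morphic presentation $x=\phi(\sigma^\infty(a))$ with all letters occurring and $\sigma$ a (non-erasing) substitution, for any $u\in\mathcal{L}(x)$ one can replace $\sigma$ by a conjugate presentation whose fixed point begins at the relevant occurrence — concretely, one finds a letter $b$ and a power $\sigma^k$ so that $\sigma^k(b)$ begins with a preimage of $u$, then applies the return-word machinery of Section~\ref{subsec:returnwords} to that shifted sequence, which does have the required prefix. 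Since only finitely many such adjustments are needed (one per descendant, and there are finitely many), everything stays within the effective framework already developed.

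The main obstacle I expect is purely bookkeeping: carefully checking that $\mathcal{D}_u(uy)$ for a non-prefix $u$ really is a derived sequence on a prefix of a uniformly recurrent morphic sequence with the same language as $x$, so that Theorem~\ref{theo:HZ} and Theorem~\ref{theo:morphic-sub} apply verbatim, and handling the no-prefix-in-$U$ case via conjugation of the substitution. No genuinely new idea is required beyond what is in the excerpt; the proof is a short deduction, which is presumably why the authors can state it as a corollary of Theorems~\ref{theo:HZ} and~\ref{theo:morphic-sub}.
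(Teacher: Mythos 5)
Your argument is correct, and it is essentially the one-line observation the paper itself makes in the sentence immediately preceding the theorem: by Theorem \ref{theo:morphic-sub} a uniformly recurrent morphic sequence is primitive substitutive, so Theorem \ref{theo:HZ} transfers verbatim; conversely, finiteness of the descendant set forces finiteness of the prefix-derived-sequence set (prefixes give $w=\epsilon$, $uy=x$), whence $x$ is morphic by Theorem \ref{theo:maindurand1998} (or directly primitive substitutive by Theorem \ref{theo:HZ}). The paper's written proof, however, handles the forward direction differently: rather than invoking the hard implication of Holton--Zamboni (primitive substitutive $\Rightarrow$ finitely many descendants) as a black box, it writes $x=wuz$ with $wu$ containing a unique occurrence of $u$ and constructs a morphism $\chi_u : R_{x,wu}^* \to R_{uz,u}^*$ with $\chi_u(\mathcal{D}_{wu}(x))=\mathcal{D}_u(uz)$, shows via Theorems \ref{theo:morphic-sub} and \ref{theo:encad} that $\{\chi_u \mid u\in\mathcal{L}(x)\}$ is finite, and then needs only the finiteness of the derived sequences on \emph{prefixes} (the main result of \cite{Durand:1998}). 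That route is more self-contained and exhibits every descendant explicitly as one of finitely many morphic images of finitely many prefix-derived sequences; yours is shorter but leans on the full strength of Theorem \ref{theo:HZ}. Your discussion of the ``no prefix in $U$'' subtlety is where the paper's $\chi_u$ construction does the actual work (the paper explicitly defers that case to this proof), but in your black-box version the issue never arises, so that paragraph is dispensable. Finally, the paper treats the periodic case separately at the outset; in your version this is harmless since periodic sequences are primitive substitutive and trivially have finitely many descendants, but it is worth a sentence.
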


\begin{proof}
If $x$ is periodic then it is a morphic sequence and it has finitely many descendants. 
Thus we suppose $x$ is non-periodic.

Let us sketch the proof.
>From Theorem \ref{theo:HZ} (or the main result in \cite{Durand:1998}) we only have to prove that morphic sequences have finitely many descendants. 

Let $x=\phi (y)$ where $y=\sigma^\infty (a)$ and $\sigma $ is an endomorphism on $A^*$ prolongable on the letter $a$. 
Let us first make some remarks.
  
Let $u\in \mathcal{L} (x)$ and $w$ be such that $x=wuz$ where $wu$ has a unique occurrence of $u$.
Then there exists a morphism $\chi_u : R_{x,wu}^* \to R_{uz,u}^*$ such that 

$$
\chi_u (\mathcal{D}_{wu} (x)) =   \mathcal{D}_u (uz).
$$

Moreover, it can be shown that the set $\{ \chi_u | u\in \mathcal{L} (x) \}$ is finite using Theorem \ref{theo:morphic-sub} and Theorem \ref{theo:encad}.
Thus, applying the main result in \cite{Durand:1998} (or Theorem \ref{theo:HZ}) achieves the proof.
\end{proof}

\subsection{Dynamical interpretation}
Let us reformulate Theorem \ref{theo:morphicdesc} and Proposition \ref{prop:boundedpreimages}.
Below $\mathbb{K}$ will stand for either $\mathbb{N}$ or $\mathbb{Z}$.

We refer to \cite{Queffelec:2010} for more details on subshifts.

A {\em (topological) dynamical system} is a couple $(X,S)$ where $S$ is a continuous map from $X$ to $X$ and $X$ is a $S$-invariant compact metric space.
It is a {\em subshift} when $X$ is a closed $S$-invariant subset of $A^\mathbb{K}$ and $S$ is the shift map ($S:A^{\mathbb{K}}\to A^{\mathbb{K}},\ (x_n)_{n\in \mathbb{K}}\mapsto (x_{n+1})_{n\in \mathbb{K}})$.
In the sequel even if the alphabet changes, $S$ we will always denote the shift map.
We say that a dynamical system $(X,S)$ is {\em minimal} whenever the only $S$-invariant subsets of $X$ are $X$ and $\emptyset$.

Let $x$ be a sequence on the alphabet $A$.
Consider the set $\Omega (x) = \{ y\in A^\mathbb{K} | \mathcal{L} (y) \subset \mathcal{L} (x) \}$.
Then, $(\Omega (x) ,S)$ is a subshift.

Let $(X,T)$ and $(Y,R)$ be two dynamical systems.
We say that $(Y,R)$ is a {\em factor} of $(X,T)$ if there is a continuous 
and onto map $\phi: X \rightarrow Y$ such that $\phi \circ T = R \circ \phi$.
If, moreover, $\phi$ is one-to-one we say $\phi $ is a {\em conjugacy}.
Then, Proposition \ref{prop:boundedpreimages} can be interpreted by means of subshifts and factors.

\begin{prop}
Let $x$ be a morphic sequence. 
There exists a constant $K$ such that for any minimal factor $(Y,S)$, with $\# Y=+\infty$, given by $\phi : (\Omega (x) , S) \to (Y,S)$,
for all $y\in Y$, $\# \phi^{-1} (\{ y \}) \leq K$.
\end{prop}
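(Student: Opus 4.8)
The plan is to read this statement as the dynamical counterpart of Proposition \ref{prop:boundedpreimages} and to reduce to it. By the Curtis--Hedlund--Lyndon theorem the factor map $\phi$ is a sliding block code, with window of some size $2r+1$; I write $\widehat\phi$ for its induced action on words. Since $x$ is morphic, $\Omega(x)$ has only finitely many minimal subsystems; if each of them is finite, then $\phi$ sends it onto a finite minimal subsystem of $Y$, forcing $Y$ finite, so no infinite minimal factor exists and the statement is vacuous. Hence I may fix an infinite minimal subsystem $M$ of $\Omega(x)$. Every minimal subsystem of $\Omega(x)$ is the subshift of a uniformly recurrent morphic sequence (the image, under the morphism defining $x$, of the substitutive fixed point associated with a primitive component of the defining endomorphism, cf. Proposition \ref{prop:decomprim}), hence primitive substitutive by Theorem \ref{theo:morphic-sub}. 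The same then holds for $Y$: since $\phi(M)$ is a nonempty closed $S$-invariant subset of the minimal system $Y$ we get $\phi(M)=Y$, and $Y$ contains the $\phi$-image of the substitutive point generating $M$, which is uniformly recurrent and morphic. In particular $M$ and $Y$ are linearly recurrent for computable constants, and $Y$ is non-ultimately-periodic.

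Next I would bound a fibre $\phi^{-1}(\{s\})$, $s\in Y$. A point $z'$ of the fibre restricts, on each long centred window, to a factor of $x$ whose $\phi$-image is the corresponding factor of $s$, and two distinct points of the fibre eventually disagree; so $\#\phi^{-1}(\{s\})$ is at most the supremum, over the factors $v$ of $s$, of the number of factors $w$ of $x$ of length $|v|+2r$ with $\widehat\phi(w)=v$. When these $w$ are required to lie in $\mathcal{L}(M)$, this count is bounded independently of $v$ and of $r$: by linear recurrence of $M$ (Theorem \ref{theo:encad}(1)) all of them occur inside a single factor $W$ of $M$ of length a bounded multiple of $|v|$, so $v$ occurs at least that many times in $\widehat\phi(W)\in\mathcal{L}(Y)$; but $Y$ being non-periodic and linearly recurrent, Theorem \ref{theo:encad}(2) bounds the number of occurrences of $v$ in a factor of $Y$ of length $\ell$ by $O(\ell/|v|)$, and letting $|v|\to\infty$ forces the count to be at most a constant built from the linear recurrence constants of $M$ and of $Y$. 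An argument as in the proof of Proposition \ref{prop:boundedpreimages} (after raising the relevant primitive substitution to a suitable power, so that $\langle\cdot\rangle\ge(K+1)^2$) makes this effective.

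The two remaining points are where I expect the effort to lie. First, one must pass from the fibres of $\phi|_M$ to the fibres of $\phi$ over all of $\Omega(x)$: a preimage $z'$ of $s$ need not lie in a minimal subsystem, but its forward orbit closure maps onto $Y$ and contains a minimal subsystem of $\Omega(x)$, and this should let one reduce its contribution to the case just treated. Second --- and this is the real obstacle --- the constant $K$ has to be made independent of $\phi$, that is, of the window $r$: the estimate above already removes $r$ from the fibre bound, but one still needs the linear recurrence constant of $Y$ to be bounded uniformly over all minimal infinite factors, which I would obtain by combining the finiteness of the set of minimal subsystems of $\Omega(x)$ with the (standard) fact that a linearly recurrent subshift has, up to isomorphism, only finitely many aperiodic subshift factors. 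Taking the maximum of the resulting effective bounds over the finitely many minimal subsystems of $\Omega(x)$ then yields a constant $K$ depending on $x$ alone, as required.
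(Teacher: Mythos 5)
Your restriction to a minimal subsystem is where the proposal diverges from the paper and where it breaks. The paper offers no written argument beyond the remark that the statement is a reformulation of Proposition \ref{prop:boundedpreimages}: after the Curtis--Hedlund--Lyndon recoding, a fibre $\phi^{-1}(\{s\})$ is controlled by the number of words of $\mathcal{L}(x)$ of a given length mapping onto a given factor of $s$, and Proposition \ref{prop:boundedpreimages} bounds exactly that quantity over the \emph{whole} language of the (not necessarily uniformly recurrent) source, uniformly in the word. Your counting, by contrast, only ever bounds the preimage words lying in $\mathcal{L}(M)$ for a minimal subsystem $M$, because it rests on linear recurrence of $M$ (``all words of length $\ell$ occur in a single word of length $K\ell$''), which is simply unavailable for $\mathcal{L}(x)\setminus\bigcup_M\mathcal{L}(M)$. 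A fibre point $z'$ outside every minimal subsystem has central windows that eventually belong to no $\mathcal{L}(M)$, and the observation that its orbit closure contains a minimal subsystem and maps onto $Y$ says nothing about how many such $z'$ a single fibre can contain. This is precisely the case Proposition \ref{prop:boundedpreimages} is built to handle: its proof uses the growth-type analysis of $\sigma$ (Lemmas \ref{lemma:samegtype} and \ref{lemma:maxmin}) to show that all preimages of $u$ in $\mathcal{L}(y)$ already occur inside one word $\sigma^{n+1}(a_1\cdots a_{K+1})$ and then counts occurrences there, with linear recurrence invoked only on the \emph{image} side. You cite that proposition only ``to make the minimal-subsystem count effective,'' which is the one place it is not needed.

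Two smaller points. The part of your argument that does work --- the bound $K_YK_M+1$ for the portion of a fibre inside $M$, with the window size $r$ disappearing in the limit $|v|\to\infty$ --- is correct and is essentially the argument of \cite{Durand&Host&Skau:1999} for linearly recurrent subshifts; it is a legitimate alternative to the paper's reduction for that portion, and you are right that this is also how one gets uniformity over the finitely many aperiodic minimal factors. But your opening claim that every minimal subsystem of $\Omega(x)$ is generated by a uniformly recurrent morphic sequence, while true, is not established by the parenthetical appeal to Proposition \ref{prop:decomprim}: for a morphic (as opposed to purely morphic) $x$ with erasing or non-growing letters this requires the reductions of Section \ref{section:decid} (Honkala's theorem and Lemma \ref{lemma:pansiot}) before Theorem \ref{theo:morphic-sub} can be applied, and it deserves a proof rather than an aside.
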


Let $(X,S)$ be a minimal subshift and $U\subset X$ be a clopen set (closed and open set).
We recall that $X$ being a Cantor space, a basis of its topology consists of clopen sets.
For example the family of cylinder sets is such a family. 
A {\em cylinder} is a set of the form $[u.v]_X = \{ x\in X \mid x_{-|u|} \cdots x_{-1} x_0\cdots x_{|v|-1} = uv \}$.
Let $S_U : U\rightarrow U$ be the {\em induced map}, i.e. if $y\in U$ then

\begin{center}
$S_U (y) = S^{r_U(y)} (y) $, where $r_U (y) = \inf \{ n > 0 \ | \  S^n (y) \in U \}$.
\end{center}

As $(X,S)$ is minimal, the map $r_U$ is well-defined and continuous. 
It is called the {\em return map} to $U$. 
The pair $(U , S_U )$ is a minimal dynamical system.
We say $(U , S_U)$ is the {\em induced dynamical system of $(X,S)$ with respect to $U$}.

>From Theorem \ref{theo:morphicdesc} we obtain the following result.

\begin{theo}
Let $x$ be a uniformly recurrent morphic sequence. 
Then, $(\Omega (x) , S)$ has finitely many induced dynamical systems on cylinders up to conjugacy. 
\end{theo}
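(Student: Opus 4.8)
The plan is to reduce the statement about induced systems on cylinders to the statement about descendants, which is Theorem \ref{theo:morphicdesc}. First I would observe that since $x$ is uniformly recurrent, $(\Omega(x),S)$ is a minimal subshift, and every clopen set is a finite union of cylinders; moreover, by minimality, every cylinder $[u.v]_X$ with $uv\in\mathcal L(x)$ can be written, after shifting by $|u|$, in terms of a cylinder based at the origin on the word $uv$. So up to conjugacy (conjugating by a power of $S$) it suffices to consider induced systems on cylinders of the form $[w.u]_X$ where $wu\in \mathcal L(x)$ and $wu$ has a unique occurrence of $u$ as a suffix — in other words, cylinders that ``mark'' an occurrence of $u$. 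This is exactly the setup of the descendant $\mathcal D_u(x)$.

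Next I would make precise the classical dictionary between induced maps on such cylinders and derived/descendant sequences. If $U=[w.u]_X$ and $y\in U$ is the point whose forward orbit reads the one-sided sequence $uz$ (with $x=wuz$, $wu$ having a unique occurrence of $u$), then the return times of the orbit of $y$ to $U$ are precisely the occurrences in $uz$ of the word $u$, and the coding of the $S_U$-orbit of $y$ by the clopen partition of $U$ into sub-cylinders indexed by the possible ``first-return words'' is conjugate to the subshift generated by the descendant $\mathcal D_u(uz)=\mathcal D_u(x)$. Hence $(U,S_U)$ is conjugate to $(\Omega(\mathcal D_u(x)),S)$. Since the labelling of the return words depends only on $\mathcal{R}_{uz,u}$ and $\mathcal D_u(x)$, two descendants that are equal as sequences give conjugate (in fact identical after identifying alphabets) induced systems.

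Then the conclusion is immediate: by Theorem \ref{theo:morphicdesc}, $x$ being a uniformly recurrent morphic sequence has only finitely many distinct descendants $\mathcal D_u(x)$, $u\in\mathcal L(x)$. Consequently $(\Omega(x),S)$ has only finitely many induced dynamical systems on cylinders up to conjugacy, and since clopen sets are finite unions of cylinders one also controls induced systems on general clopen sets (though the statement only claims it for cylinders). In the periodic case the statement is trivial since $\Omega(x)$ is finite.

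The main obstacle, and the only place needing real care, is the bookkeeping in the second step: verifying that the induced system $(U,S_U)$ on the cylinder marking $u$ is genuinely conjugate to the subshift of the descendant $\mathcal D_u(x)$, independently of the base word $w$ used to anchor the cylinder at the origin. One must check that different choices of $w$ (equivalently, shifting the marked cylinder) only change the induced system by a conjugacy, and that the finite set of sub-cylinders of $U$ used to code first returns is in canonical bijection with $\mathcal R_{uz,u}$; this is where the precise definitions of return words to a word and of descendants from \cite{Holton&Zamboni:1999} and Section \ref{subsec:returnwords} do the work. Everything else is a direct appeal to minimality and to Theorem \ref{theo:morphicdesc}.
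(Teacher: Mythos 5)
Your proposal is correct and follows exactly the route the paper intends: the paper gives no details beyond asserting that the theorem follows from Theorem \ref{theo:morphicdesc}, and the dictionary you spell out (induced system on a cylinder $\leftrightarrow$ subshift of the corresponding descendant, after conjugating by a power of $S$ to base the cylinder at the origin) is precisely the standard argument being invoked. Your write-up simply makes explicit what the paper leaves implicit.
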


This could also be directly deduced from \cite{Holton&Zamboni:1999} using the arguments at the end of Section \ref{subsec:morphicsub}.

\subsection{Algorithmical complexity}

In \cite{Nicolas&Pritykin:2009} is shown the decidability of the uniform recurrence for purely morphic sequences and automatic sequences by presenting a polynomial time algorithm.
>From the present work, and for purely morphic sequence, we can deduce a polynomial algorithm as a direct consequence of Lemma \ref{lemme:horn} and Proposition \ref{prop:decomprim}.
But for the general case the algorithm having a high complexity we did not find interesting to compute it precisely.

\subsection{Other decidability problems for morphisms}

In the conclusion of the work \cite{Nicolas&Pritykin:2009} is mentioned some open problems of decidability concerning morphic sequences. 
Let us recall them and add some.
The inputs will always be the morphism $\phi$ and the endomorphism $\sigma$ defining  the morphic sequence or pairs of such couples $(\phi , \sigma )$.

\subsubsection*{Periodicity}

The decidability of the ultimate periodicity has been recently solved in both \cite{Mitrofanov:preprint2011a} and \cite{Durand:preprint2011}. It was previously shown in the primitive case in \cite{Durand:2012} and for purely morphic sequences in \cite{Pansiot:1986} and \cite{Harju&Linna:1986}.

\subsubsection*{Recurrence}

The decidability problem concerning different notions of recurrence such as recurrence or almost periodicity remains open (see \cite{Maes:1998,Maes:2000}, \cite{Muchnik&Semenov&Ushakov:2003} and \cite{Pritykin:2010}). 

\subsubsection*{Language}

Given two morphic sequences, is it decidable whether that they have the same language?

In \cite{Fagnot:1997b} is given some answers to this question for purely morphic sequences generated by primitive substitutions under some mild assumptions.
In \cite{Fagnot:1997a} I. Fagnot solves this decidability problem for automatic sequences. 
She considers two cases : the multiplicative dependence and independence of the dominant eigenvalues $p$ and $q$ (which are integers in these settings) of the underlying substitutions, that is whether $\log p / \log q $ belongs or not to $\mathbb{Q}$.

She managed to treat the multiplicatively independent case using the extension of Cobham's theorem for automatic languages she obtained in the same paper, then testing the ultimate periodicity of the languages and to see if they are equal. 

For the dependent case she used the logical framework of Presburger arithmetic associated to automatic sequences.

Thus, this problem remains open.

\subsubsection*{Equality}

Given two morphic sequences $x$ and $y$, is it decidable whether $x=y$?
This was shown to be true in \cite{Durand:2012} in the primitive case.

For purely morphic sequences, this problem (called the D$0$L $\omega$-equivalence problem) was solved in 1984 by K. Culik II and T. Harju \cite{Culik&Harju:1984}.
More is shown for this problem in \cite{Durand:2012} in the primitive case: the two primitive substitutions have powers that coincide on some cylinders. 

\subsubsection*{Factor and conjugacy}

Let $(X,S)$ and $(Y,S)$ be two subshifts generated by two morphic sequences. 
Could we decide whether one is a factor of the other, or, whether they are conjugate. 
In \cite{Coven&Keane&LeMasurier:2008} some elements of answers can be found for the Morse subshift.

\subsubsection*{Orbit equivalence}

With the same settings as before we can ask for the decidability of the {\em orbit equivalence} of the subshifts. 
That is, does there exist a continuous bijective map $\phi$ sending orbits to orbits: $\phi (\{ S^n (x) | n\in \mathbb{Z} \}) = \{ S^n (\phi (x)) | n\in \mathbb{Z} \})$ for all $x\in X$.
Thanks to the characterization of orbit equivalence by means of dimension groups, a stronger notion of orbit equivalence (called the {\em strong orbit equivalence}) is decidable. 
This comes from \cite{Bratteli&Jorgensen&Kim&Roush:2001} and the fact that the dimension groups of minimal substitution system are stationary (see \cite{Forrest:1997} or \cite{Durand&Host&Skau:1999}).

We refer the interested reader to \cite{Giordano&Putnam&Skau:1995} for more details on (strong) orbit equivalence. 

The decidability of orbit equivalence remains open for substitution subshifts.

\subsection{For tilings}

All these questions could also be asked for self-similar tilings or multidimensional substitutions.
The more tractable problems would certainly concern "square multidimensional substitutions" (see for example the references \cite{Cerny&Gruska:1986,Salon:1986,Salon:1987,Salon:1989} or, in an interesting logical context, the nice survey \cite{Bruyere&Hansel&Michaux&Villemaire:1994}). 
Moreover, the striking papers \cite{Mozes:1989} and \cite{GoodmanStrauss:1998} should also be mentioned as they show that in higher dimension the situation is not as "simple" as it is in dimension 1.

\bibliographystyle{new2}
\bibliography{unifrec}

\end{document}